\newif \ifPdfTex
\newif \ifSIAM
\ifSIAM\documentclass[final]{siamltex} \else
\renewcommand{\S}{Section~}
\numberwithin{equation}{section}
\newif \iffract
\newif \ifPDF
\newcommand{\be}{\begin{equation}}
\newcommand{\ee}{\end{equation}}
\def\ba#1\ea{\begin{align}#1\end{align}}
\def\ban#1\ean{\begin{align*}#1\end{align*}}
\def\bat#1\eat{\begin{alignat}#1\end{alignat}}
\def\batn#1\eatn{\begin{alignat*}#1\end{alignat*}}
\def\bs#1\es{\begin{split}#1\end{split}}
\newcommand{\bse}{\begin{subequations}}
\newcommand{\ese}{\end{subequations}}
\newcommand{\bt}{\begin{theorem}}
\newcommand{\et}{\end{theorem}}
\newcommand{\bl}{\begin{lemma}}
\newcommand{\el}{\end{lemma}}
\newcommand{\bc}{\begin{corollary}}
\newcommand{\ec}{\end{corollary}}
\newcommand{\bp}{\begin{proof}}
\newcommand{\ep}{\end{proof}}
\newcommand{\bd}{\begin{definition}}
\newcommand{\ed}{\end{definition}}
\newcommand{\br}{\begin{remark}}
\newcommand{\er}{\end{remark}}
\newcommand{\bas}{\begin{assumption}}
\newcommand{\eas}{\end{assumption}}
\newcommand{\bex}{\begin{example}}
\newcommand{\eex}{\end{example}}
\newcommand{\bqo}{\begin{quote}}
\newcommand{\eqo}{\end{quote}}
\newcommand{\bdc}{\begin{description}}
\newcommand{\edc}{\end{description}}
\newcommand{\bi}{\begin{itemize}}
\newcommand{\ei}{\end{itemize}}
\newcommand{\ben}{\begin{enumerate}}
\newcommand{\een}{\end{enumerate}}
\def\ba#1\ea{\begin{align}#1\end{align}}
\def\ch{{c_h}}
\def\chb{{\bar c_h}}
\def\cht{{\tilde c_h}}
\def\chn{{c}_h}
\def\chtn{\tilde {c}_h}
\newtheorem{lemma}{Lemma}[section]
\newtheorem{corollary}[lemma]{Corollary}
\newtheorem{assumption}[lemma]{Assumption}
\newtheorem{theorem}[lemma]{Theorem}
\newtheorem{definition}[lemma]{Definition}
\newtheorem{remark}[lemma]{Remark}
\newtheorem{example}[lemma]{Example}
\newif \ifwhere
\newcommand\ie{i.e.}
\newcommand\eg{e.g.}
\newcommand\cf{cf.}
\newcommand\eal{{\em et al.}}
\newcommand\eq{:=}
\newcommand\qe{=:}
\newcommand\pt{\partial}
\DeclarePairedDelimiter\newnorm{\|}{\|}
\renewcommand{\norm}{\newnorm}
\newcommand{\VpV}[2]{\langle {#1},{#2}\rangle_{D(A^{1/2})', D(A^{1/2})}}
\newcommand{\VpVS}[2]{\langle {#1},{#2}\rangle_{H^{-1}_\#(\Omega), H^1_\#(\Omega)}}
\newcommand\VN{{\mathcal{V}_N}}
\newcommand\eres{\eta_{\mathrm{res}}}
\newcommand\etaltwo{\eta_{{L^2}}}
\newcommand\scp{{\cdot}}
\newcommand\Gr{\nabla}
\newcommand\Dv{\nabla {\cdot}}
\newcommand\dv{\mathrm{div}}
\newcommand\Lap{\Delta}
\newcommand\Th{\mathcal{T}_h}
\newcommand\Vh{\mathcal{V}_h}
\newcommand\Vhint{\mathcal{V}^{\mathrm{int}}_h}
\newcommand\Vhext{\mathcal{V}^{\mathrm{ext}}_h}
\newcommand\Ta{{\mathcal{T}_{\ver}}}
\newcommand\Om{\Omega}
\newcommand\oma{{\omega_\ver}}
\newcommand{\elm}{{K}}
\newcommand{\ver}{{\ta}}
\newcommand\li{\lambda_i}
\newcommand\lk{\lambda_k}
\newcommand{\la}[1]{\lambda_{#1}}
\newcommand\ola[1]{\overline\lambda_{#1}}
\newcommand{\ula}[1]{{\underline\lambda_{#1}}}
\newcommand\lih{\lambda_{ih}}
\newcommand\lkh{\lambda_{kh}}
\newcommand{\lah}[1]{\lambda_{#1 h}}
\newcommand{\lan}[1]{\lambda_{#1 N}}
\newcommand{\ue}[1]{\varphi_{#1}^0}
\newcommand{\uh}[1]{\varphi_{#1 h}}
\newcommand{\uho}[1]{\varphi_{#1 h}^0}
\newcommand{\un}[1]{\varphi_{#1 N}}
\newcommand\uih{\varphi_{ih}}
\newcommand\ujh{\varphi_{jh}}
\newcommand\ukh{\varphi_{kh}}
\newcommand{\Phie}{\vec{\Phi}^0}
\newcommand{\Phih}{\vec{\Phi}_h}
\newcommand{\Phiho}{\vec{\Phi}_h^0}
\newcommand{\Res}{{\rm Res}}
\newcommand\res{\scriptr_{(ih)}}
\newcommand\Resg{{{\rm Res}_{m,M}^h}}
\newcommand\Resc{{\rm Res}(\gamh)}
\newcommand\Tr{{{\rm Tr}}}
\newcommand\Span{{{\rm Span}}}
\newcommand\frh[1]{{\bm \sigma}_{#1 h}}
\newcommand\Dt{{H^{-1}(\Omega)}}
\newcommand\kk{{\bf k}}
\newcommand\xx{{\vec{x}}}
\newcommand\ta{{\bf a}}
\newcommand\tn{{\bf n}}
\newcommand\tv{{\bf v}}
\newcommand\tx{{\bf x}}
\newcommand\tV{{\bf V}}
\newcommand\T{\mathcal{T}}
\newcommand\PP{{\mathbb P}}
\newcommand{\N}{{\mathbb N}}
\def\R{{\mathbb R}}
\def\Z{{\mathbb Z}}
\renewcommand{\real}{\mathbb R}
\newcommand\RR{{\mathcal{R}}}
\newcommand\RRs{{\mathcal{R}^\ast}}
\newcommand\Hoi[1]{H^1(#1)}
\newcommand\Hoo{H^1_0(\Om)}
\newcommand\Hoor{H^1_{0}(\Om)}
\newcommand\Lt{L^2(\Om)}
\newcommand\Hdv{{\mathbf H}(\dv,\Om)}
\newcommand\Hsa{H^1_*(\oma)}
\newcommand\W{{\mathcal V}}
\newcommand{\HH}{\mathcal{H}}
\newcommand\Ttab{\rule{0pt}{2.6ex}}       % Top strut
\newcommand\Btab{\rule[-1.2ex]{0pt}{0pt}} % Bottom strut
\renewcommand{\vec}[1]{\mathsf{#1}}
\newcommand{\matr}[1]{\textsf{\textbf{#1}}}
\newcommand{\m}{m}
\newcommand{\M}{M}
\newcommand{\gam}{{\gamma^0}}
\newcommand{\gamh}{\gamma_h}
\newcommand{\Lh}[1]{{{\bm \Lambda}^{\!h}_{#1}}}
\newcommand{\La}[1]{{{\bm \Lambda}_{#1}}}
\def\HS{{\gS_2(\HH)}}
\def\Sone{\gS_1(\HH)}
\def\LH{\mathcal{L}(\HH)}
\def\gS{{\mathfrak S}}
\newcommand{\J}{{J}}
\newcommand{\LL}{{L}}
\newcommand{\I}{{I}}
\newcommand{\A}{{A}}
\newcommand{\Over}{{\matr{M}_{\Phie,\Phiho}}}
\newcommand{\sHH}{{}}
\newcolumntype{L}[1]{>{\raggedright\let\newline\\\arraybackslash\hspace{0pt}}m{#1}}
\newcolumntype{C}[1]{>{\centering\let\newline\\\arraybackslash\hspace{0pt}}m{#1}}
\newcolumntype{R}[1]{>{\raggedleft\let\newline\\\arraybackslash\hspace{0pt}}m{#1}}
\title{Guaranteed {\it a posteriori} bounds for eigenvalues and eigenvectors: multiplicities and clusters \thanks{Part of this work has been supported from French state funds managed by the CalSimLab LABEX and the ANR within the Investissements d'Avenir program (reference ANR-11-LABX-0037-01). The last author has also received funding from the European Research Council (ERC) under the European Union’s Horizon 2020 research and innovation program (Grant Agreement No. 647134 GATIPOR). Part of this work was supported by the French ``Investissements d'Avenir" program, project ISITE-BFC (contract ANR-15-IDEX-0003). 
YM and EC acknowledge funding from PICS- CNRS, PHC PROCOPE 2017 (Project No. 37855ZK), and  European Research Council (ERC) under the European Union's Horizon 2020 research and innovation programme (grant agreement No 810367).}}
\author[1,6]{Eric Canc\`es}
\author[2]{Genevi\`eve Dusson}
\author[3,4]{Yvon Maday}
\author[5]{Benjamin Stamm}
\author[6,1]{Martin Vohral\'ik}
\affil[1]{CERMICS, Ecole des Ponts ParisTech, 6 \& 8 Av. Pascal, 77455 Marne-la-Vall\'ee, France}
\affil[2]{Universit\'e Bourgogne Franche-Comt\'e, Laboratoire de Math\'ematiques de Besan\c{c}on, UMR CNRS 6623, 16 route de Gray,
25030 Besan\c{c}on, France}
\affil[3]{Sorbonne Universit\'e and Universit\'e de Paris, CNRS, Laboratoire Jacques-Louis Lions (LJLL), 75005 Paris, France}
\affil[4]{Institut Universitaire de France, 75005 Paris, France}
\affil[5]{Center for Computational Engineering Science, RWTH Aachen University, Aachen, Germany}
\affil[6]{Inria, 2 Rue Simone Iff, 75589 Paris, France}
\ifPdfTex \usepackage[breaklinks,bookmarks=false]{hyperref}
\ifPDF \usepackage[dvips,breaklinks,bookmarks=false]{hyperref} \fi \fi
\begin{document}

\renewcommand{\thefootnote}{\arabic{footnote}}

\maketitle

\begin{abstract}
   \noindent This  paper presents {\it a posteriori} error estimates for conforming numerical approximations of eigenvalue clusters of second-order self-adjoint elliptic linear operators with compact resolvent. Given a cluster of eigenvalues, we estimate the error in the sum of the eigenvalues, as well as the error in the eigenvectors represented through the density matrix, \ie, the orthogonal projector on the associated eigenspace. This allows us to deal with degenerate (multiple) eigenvalues within the framework. All the bounds are valid under the only assumption that the cluster is separated from the surrounding smaller and larger eigenvalues; we show how this assumption can be numerically checked. Our bounds are guaranteed and converge with the same speed as the exact errors. They can be turned into fully computable bounds as soon as an estimate on the dual norm of the residual is available, which is presented in two particular cases: the Laplace eigenvalue problem discretized with conforming finite elements, and a Schr\"odinger operator with periodic boundary conditions of the form $-\Delta + V$ discretized with planewaves. For these two cases, numerical illustrations are provided on a set of test problems.

   \end{abstract}

\tableofcontents

\section{Introduction} \label{sec_intr}

Elliptic eigenvalue problems arise in many mathematical models used in science and engineering; often, precise approximations of eigenvalues and eigenvectors are crucial. To guarantee the quality of the approximations at stake, one needs to estimate the size of the errors for the computed quantities, namely the eigenvalues and eigenvectors. {\it A posteriori} error bounds aim at providing such estimates.

Already very good {\it a posteriori} estimates have been proposed for elliptic source problems, based for example on the theory of equilibrated fluxes for the Laplace source problem following Prager and Synge~\cite{Prag_Syng_47}, see~\cite{Lad_Leg_83, Dest_Met_expl_err_CFE_99, Brae_Pill_Sch_p_rob_09, Ern_Voh_p_rob_15} and the references therein.
Nonetheless, the error estimation for eigenvalue problems seems more complex in comparison. Following Kato~\cite{Kat_bounds_eigs_49},
Forsythe~\cite{Fors_eigs_55}, Weinberger~\cite{Wein_eigs_56}, and Bazley and
Fox~\cite{Baz_Fox_eigs_Sch_61},
recent works have been presented for the estimation of simple eigenvalues, possibly only the lowest one, see \eg~\cite{Luo_Lin_Xie_eigs_bounds_12,Hu_Huang_Lin_bounds_eigs_14, Hu_Huang_Shen_eigs_NC_14,Cars_Ged_LB_eigs_14, Liu_fram_eigs_15}, see also the references therein.
A thorough {\it a posteriori} analysis of errors in both simple eigenvectors and eigenvalues for the Laplace eigenvalue problem can be found in~\cite{Canc_Dus_Mad_Stam_Voh_eigs_conf_17} (conforming discretization methods) and~\cite{Canc_Dus_Mad_Stam_Voh_eigs_nonconf_18} (a unified framework including nonconforming discretization methods).

The above results however only hold for simple eigenvalues, whereas degenerate or near degenerate eigenvalues often appear in practice. This can dramatically deteriorate the estimates, especially when the latter depend on the gap between the estimated eigenvalue and the surrounding ones, as it is the case in~\cite{Canc_Dus_Mad_Stam_Voh_eigs_conf_17, Canc_Dus_Mad_Stam_Voh_eigs_nonconf_18}.
Only few results have been presented so far for the {\it a posteriori} estimation of multiple eigenvalues or clusters of eigenvalues.
In~\cite{Liu_fram_eigs_15,Wang_Cham_Lad_Zhong_eigvecs_16}, guaranteed error estimates are presented for the eigenvalue error, but not the eigenvector error.
The derivation of optimal eigenvalue convergence rates for adaptive finite element methods can be found in~\cite{Gallistl2015-yf,Dai2015-jq} for conforming finite elements, in~\cite{Gallistl2014-nq} and~\cite{Boffi2017-fj} for nonconforming and mixed finite elements, and in~\cite{Bonito2016-sv} for conforming and nonconforming finite elements of higher order.
{\it A posteriori} error estimation for clusters of eigenvalues have been presented in the case of the discontinuous Galerkin method in~\cite{Gia_Hall_a_post_eig_DG_12} and for Crouzeix--Raviart nonconforming finite elements in~\cite{Boffi2017-ny}. Also, {\it a posteriori} bounds have been established in~\cite{Grubisic2009-bg} and
\cite{Bank2013-zj} for $\mathbb{P}_1$ finite elements with triangular meshes, where the bounds are directly derived on the eigenspace, and are therefore independent of the choice of the eigenvectors, as is the case in this work. Finally, a recent contribution deriving upper bounds on eigenvectors associated with multiple eigenvalues is~\cite{Liu_Vejch_eigs_clusters_19}. Though the methodology also works on eigenspaces and the bounds are guaranteed, it does not seem to extend to a general cluster (the precision is limited by the difference of the largest and smallest eigenvalue in the cluster).

In this article, we extend the {\it a posteriori} error estimates presented in~\cite{Canc_Dus_Mad_Stam_Voh_eigs_conf_17} to clusters of eigenvalues, which  includes the possibility of degenerate eigenvalues. The estimators are derived for a generic second-order elliptic self-adjoint operator with compact resolvent denoted by $A$. More precisely,
let $(\la{i},\ue{i})_{i\ge 1}$ be the eigenvalues and associated eigenvectors of the operator $A$. We are interested in the cluster of eigenvalues $\la{\m},\ldots,\la{\M},$ with $\m,\M\in\N\backslash\{0\}, \m\le \M$. We first derive guaranteed bounds for the error in the sum of the eigenvalues. To derive these bounds, the only necessary assumption is that the cluster is separated from the surrounding lower and higher eigenvalues, as stated in Assumption~\ref{as:gap}, and a continuous--discrete gap condition summarized in Assumption~\ref{as:gap_cont_disc}. The problem is described in Section~\ref{sec_eig_pb}, and we consider a conforming discretization presented in Section~\ref{sec:conf_disc}.

In order to account for all the exact (respectively approximate) eigenvalues and eigenvectors of the cluster as a whole, the estimates rely on the use of  density matrices, which are the orthogonal projectors on the exact (resp. approximate) eigenspaces spanned by the eigenvectors of the cluster.
This allows to handle the nonuniqueness of the eigenvectors.
Indeed, under the gap Assumption~\ref{as:gap}, the exact density matrix $\gam$ is uniquely defined. The error estimates are therefore presented on the density matrix error in Hilbert--Schmidt and energy norms. All the definitions necessary to introduce the density matrix framework are presented in Section~\ref{sec:functional_setting}.

As we are aware that error estimates are usually provided in terms of eigenvectors instead of density matrix, we show in Section~\ref{sec:fct_density_matrix_equivalence} that for a well-chosen norm, the error on the density matrix is in fact equivalent to the error on the eigenvectors for a particular choice of approximate eigenvectors, which essentially guaranties that they are aligned with the reference exact eigenvectors. We then in Section~\ref{sec_res} introduce the residual, defined in this framework as an operator and not as a functional as in usual eigenvalue problems.

Generic error equivalences are presented in Section~\ref{sec_equiv}. More precisely, we provide estimates on density matrix errors and on the sum of eigenvalues error in terms of dual norms of the residual. The bounds are guaranteed, containing no unknown constant, but are not directly computable. Indeed, they depend on the dual norm of the residual, which is not always computable and can be difficult to estimate.

In Section~\ref{sec:apost_estimates}, we transform these equivalences into fully computable error bounds in two cases. We first in Section~\ref{sec:Laplace_apost} treat the case of the Laplace eigenvalue problem on an open Lipschitz polygon or polyhedron $\Omega \subset \R^d$ with Dirichlet boundary conditions discretized on simplicial meshes by conforming finite elements of degree $p$, based on~\cite{Canc_Dus_Mad_Stam_Voh_eigs_conf_17} for the estimate of the dual norm of the residual. This estimate relies on the construction of an equilibrated flux requiring to solve mixed finite element local residual problems. The error bound for the sum of the eigenvalues in the considered cluster is given in Theorem~\ref{thm_bound_eigs} and reads
\be \label{eq_est_vals_intr}
      0
      \le   \sum_{i=\m}^\M (\lah{i} - \la{i})
      \le \eta^2,
\ee
where $\lah{i}$ is the $i^{\rm th}$ approximate eigenvalue (counting multiplicities). Further, error bounds on the density matrix error are provided. In particular, Theorem~\ref{thm_bound_eigvecs} shows that
\be \label{eq_est_vecs_intr}
    \norm{|\Gr| (\gam - \gamh)}_\HS \leq \eta,
\ee
$\gamma_h$ being the approximate density matrix and $\norm{\cdot}_\HS$ the Hilbert--Schmidt norm associated with the $L^2(\Omega)$ Hilbert space. Moreover, Theorem~\ref{thm_bound_eigvecs} also shows that these bounds are efficient in the sense that
\be \label{eff_est_vecs_intr}
    \eta \leq C \norm{|\Gr| (\gam - \gamh)}_\HS,
\ee
where $C$ is a generic constant independent of the mesh size $h$ and the polynomial degree $p$.

We distinguish two cases. In Case I, no assumption other than the gap Assumptions~\ref{as:gap} and~\ref{as:gap_cont_disc} are needed, and we give sufficient conditions to check them in practice, \cf\ Remark~\ref{rem_pract}.
There, the bound $\eta^2$  only depends on the flux reconstruction and on a lower bound for the relative gaps between the cluster of approximate eigenvalues and the surrounding exact eigenvalues, that is lower bounds of the quantities
$\displaystyle\left( \frac{\lah{\m}}{\la{\m-1}} -1 \right)$ and
$\displaystyle\left( 1- \frac{\lah{\M}}{\la{\M+1}} \right)$.
In Case II, which holds under an additional elliptic regularity assumption on the corresponding source problem as described in~\eqref{eq_adj}, the pre-factor in $\eta$ can be brought to the optimal value of 1.

In Section~\ref{sec:PW_theory}, we then provide bounds for Schr\"odinger-type operators of the form $-\Delta+V$ on a cubic box with periodic boundary conditions, discretized with a planewave basis, in which case the dual norm of the residual is explicitly computable as the Laplace operator is diagonal in this basis. This allows to straightforwardly apply the bounds obtained in Section~\ref{sec_equiv} and derive error estimates both for the sum of the eigenvalues error and the error on the density matrix built from the eigenvectors in the form similar to~\eqref{eq_est_vals_intr}--\eqref{eff_est_vecs_intr}.
For clarity, the different assumptions used throughout this article are collected in Table~\ref{tab:assumptions}, and the main results and corresponding assumptions are listed in Table~\ref{tab:results_assumptions}.

\begin{table}
   \centering
   \begin{tabular}{|c|C{6cm}|C{3.5cm}|}
      \hline
      Assumption & Name & Implications \\
      \hline
      Assumption~\ref{as:gap} & Continuous gap conditions
      &  \\
      \hline
      Assumption~\ref{as:nonorth} & Non-orthogonality of the exact and approximate eigenspaces&  \\
      \hline
      Assumption~\ref{as:gap_cont_disc} & Continuous--discrete gap conditions & \\
      \hline
      Assumption~\ref{as:lowerbounds}
      & 
      Availability of accurate enough lower bounds for $\la{\m}$ and $\la{\M+1}$
      & 
      implies 
      Assumptions~\ref{as:gap} and~\ref{as:gap_cont_disc} 
      \\
      \hline
   \end{tabular}
   \caption{Assumptions and implications between assumptions}
   \label{tab:assumptions}
\end{table}

\begin{table}
   \begin{center}
      \begin{tabular}{|c|c|l|}
         \hline 
         \multicolumn{2}{|c|}{Assumptions} & Reference of the results \\
          \hline
         \multicolumn{2}{|c|}{\multirow{3}{*}{
            \ref{as:gap}}}
          & Lemma~\ref{lem:2.6} [Link between eigenvalue and eigenvector errors] \\
          \multicolumn{2}{|c|}{} & Theorem~\ref{th:eigenvalue_bounds} [Eigenvalue bounds] \\
          \multicolumn{2}{|c|}{} & Theorem~\ref{th:DM_bounds}, \eqref{eq:3.5.1} 
          [Upper bounds for the density matrix error]\\
         \cline{2-3}
         \multirow{2}{*}{\phantom{a}} & \multirow{2}{*}{
            and~\ref{as:nonorth}}
         & Lemma~\ref{lem:norm_eq2} [Link between density matrix and eigenvector errors] \\
         &  & Theorem~\ref{th:lower_bound_DM} [Lower bound for the density matrix error] \\
          \hline
          \hline
          \multicolumn{2}{|c|}{\multirow{5}{*}{
             \ref{as:gap} and~\ref{as:gap_cont_disc}}}
          & Lemma~\ref{lem:DM_bounds} [Bounds on the density matrix error] \\
          \multicolumn{2}{|c|}{} &    Theorem~\ref{thm_bound_eigs}, Case II [Guaranteed bounds for the sum of eigenvalues]\\
          \multicolumn{2}{|c|}{} &  Theorem~\ref{thm_bound_eigvecs}, Case II [Guaranteed bound for the density matrix error] \\ 
          \multicolumn{2}{|c|}{} & Theorem~\ref{thm_bound_eigs_PW} [Guaranteed bounds for the sum of eigenvalues]  \\
          \multicolumn{2}{|c|}{} & Theorem~\ref{thm_bound_eigvecs_PW} [Guaranteed bound for the density matrix errors]\\
          \cline{2-3}
         \multirow{2}{*}{\phantom{a}} & \multirow{3}{*}{
            and~\ref{as:nonorth}}
         &    Theorem~\ref{th:DM_bounds}, \eqref{eq:3.5.1bis} [Upper bounds for the density matrix error]\\
          &  &  Theorem~\ref{thm_bound_eigs}, Case I [Guaranteed bounds for the sum of eigenvalues]\\
          &  &  Theorem~\ref{thm_bound_eigvecs}, Case I [Guaranteed bound for the density matrix error] \\ 
          \hline
      \end{tabular}
   \end{center}

   \caption{Summary of the main results with the employed assumptions.}
   \label{tab:results_assumptions}
\end{table}

We present in Section~\ref{sec_num} numerical results for (i) the Laplace operator discretized with conforming finite elements in a 2D setting, and (ii) a Schr\"odinger operator $-\Delta+V$ on a cubic box with periodic boundary conditions, discretized in a planewave basis, in a 1D and 2D setting.
The error bounds fulfill the expectations, and in particular, the necessary assumptions already hold for coarse bases.
Finally, some conclusions are drawn in Section~\ref{sec:concl}, and Appendix~\ref{app:proof_lemma} details the proof of a technical result.

\section{Setting} \label{sec_setting}

We introduce here the considered eigenvalue problem, its generic conforming discretization, and the functional analysis setting that we adopt.

\subsection{The eigenvalue problem}\label{sec_eig_pb}

Let $\HH$ be a real {\em separable Hilbert space} endowed with an inner product
denoted by $(\cdot, \cdot)_\sHH$, and a corresponding norm denoted by $\|\cdot\|_\sHH$.
We consider a {\em self-adjoint operator} $A$ on $\HH$ with domain $D(A)$, {\em bounded-below}, and
with {\em compact resolvent}.
For such an operator, there exists a non-decreasing sequence of real numbers
$(\la{k})_{k \geq 1}$ such that $\la{k} \rightarrow +\infty$ and an
orthonormal basis $(\ue{k})_{k \geq 1}$ of $\HH$ consisting of vectors
of $D(A)$ such that
\begin{equation} \label{eq:eig_pb}
  \forall k \geq 1, \quad \A \ue{k} = \la{k} \ue{k}.
\end{equation}
In the following, we will often employ the Parseval identity, which states that for any $v \in
\HH$,
\begin{equation}
  \label{eq_Pars}
     \norm{v}_\sHH^2 = \sum_{k \geq 1} |(v, \ue{k})_\sHH|^2.
\end{equation}
Up to shifting the operator $\A$ by a constant $c\in\mathbb{R}_+$, we can assume without loss of generality that $\A$ is a
positive definite operator, in which case $(\la{k})_{k \geq 1}$ is a
sequence of positive numbers.
This enables to define the operators $\A^{s}$, $s \in \R$,
by their domains
\bse\label{eq_As} \begin{equation} \label{eq_As_dom_norm}
    D(\A^{s}) \eq \left\lbrace v\in\mathcal{H}; \quad
    \norm{\A^{s} v}_\sHH^2 \eq \sum_{k \geq 1} \la{k}^{2s} |(v,\ue{k})_\sHH|^2 < +\infty  \right\rbrace
\end{equation}
and expressions
\begin{equation} \label{eq_As_expr}
	\A^{s}: v\in D(\A^{s}) \mapsto \sum_{k \geq 1} \la{k}^{s} (v, \ue{k})_\sHH \ue{k} \in \HH.
\end{equation}\ese
In particular, the norm $\|A^{1/2}\bullet\|$ is referred to as the {\it energy norm}.
We also remark that $\A^{0}$ is the identity operator on $\HH$, that $\A^s \A^t = \A^{s+t}$ for all $s,t \in \R$, and that $D(A^s)=\HH$ for all $s \le 0$.
Also, $D(\A^s) \subset D(\A^t)$ for $s \geq t$, so that for all $k \geq 1$, $\ue{k}$ from~\eqref{eq:eig_pb} belongs to $D(\A^s)$ for all $s\in\R$  and
\begin{equation}
  \label{eq:eig_pb_weak}
    (\A^{1/2}\ue{k}, \A^{1/2} v)_\sHH = \la{k} (\ue{k}, v)_\sHH \qquad \forall v \in D(\A^{1/2}), \, \forall k \geq 1.
\end{equation}
This in particular implies, as $\norm{\ue{k}}=1$,
\begin{equation}
  \label{eq:eig_pb_norm}
  \| A^{1/2} \ue{k}\|_\sHH^2 = \la{k} \qquad \forall k \geq 1.
\end{equation}

\subsection{Functional analysis setting}
\label{sec:functional_setting}

In this article, we focus on the error estimation of clusters of eigenvalues and their corresponding eigenvectors. More precisely, given  $\m,M\in\N \setminus \{0\}$, $m\le M$, we consider the {\em eigenvalue cluster} composed of the $\J \eq M-\m+1$ eigenvalues $(\la{m},\ldots,\la{M})$ from~\eqref{eq:eig_pb}, counted with their multiplicities. For our {\it a posteriori} analysis, we will need to assume that the considered cluster is separated from the rest of the spectrum:
\begin{assumption}[Continuous gap conditions]
   \label{as:gap}
   There holds
  $\la{m-1}<\la{m}$ if $m>1$ and $\la{\M}<\la{\M+1}$.
\end{assumption}
We show in Remark~\ref{rem_pract} below how this condition can be verified practically.

We denote an orthonormal set of corresponding {\em eigenvectors} by
\begin{equation}
   \label{eq:ex_eigenvectors}
    \Phie \eq (\ue{m},\ldots,\ue{M}).
\end{equation}
Note that estimating the error between $\Phie$ and given approximate eigenvectors $\Phih = (\uh{m},\ldots,\uh{M})$ cannot in general be done without further assumptions on the choice of the eigenvectors. Indeed, in particular for multiple eigenvalues $\la{m} = \ldots = \la{M}$, for any matrix $\matr{U} \in O(\J)= \left\{  \matr{U}\in \R^{\J\times \J}\middle| \, \matr{U}^T \matr{U} = \matr{1}_\J     \right\}$, the group of orthogonal matrices of order~$\J$, $\Phie \matr{U}$ also form an orthonormal set of eigenvectors associated with $(\la{m},\ldots,\la{M})$.

To get rid of the above problematic nonuniqueness, we will measure and estimate
the errors not on the eigenvectors directly, but in the spaces spanned by these eigenvectors, which are uniquely determined, even in the case of degenerate (multiple) eigenvalues, as long as the gap Assumption~\ref{as:gap} is satisfied.
In this case, the orthogonal projector for the inner product~$(\cdot, \cdot)_\sHH$ onto $\text{Span} \{\ue{m},\ldots,\ue{M} \}$, denoted by $\gam$ and called {\em density matrix}, is also unique.
It is the rank-$\J$ operator on $\HH$ defined by
\begin{equation}
  \label{def:gamma0}
 \forall v \in \HH, \quad \gam v \eq \sum_{i = \m}^\M (v, \ue{i})_\sHH \ue{i}.
\end{equation}
The exact and approximate eigenspaces can therefore be compared through their density matrices.
In fact, we will introduce below a norm to measure the error on the density matrices which is equivalent to the energy norm of the error on the eigenvectors for the particular choice of eigenvectors for which the approximate eigenvectors are as much aligned as possible with the corresponding exact eigenvectors. Note that in particular $\gam v = v$ if $v \in \text{Span} \{\ue{m},\ldots,\ue{M}\}$ and $\gam v = 0$ if $v$ is in the orthogonal complement of $\text{Span} \{\ue{m},\ldots,\ue{M}\}$, which will often be used below.

The functional setting of trace-class and Hilbert--Schmidt operators used to define this norm is presented in detail in~\cite[Chapter VI]{Reed1978-li} and can also be found in~\cite{Cances_undated-fd}. We only briefly recall here the properties used in this article.
We denote by $\LH$ the space of bounded linear operators on $\HH$.
If $B \in \LH$ is a {\em positive} operator, (\ie, $(v,B v) \ge 0$ for any $v \in \HH$), then the value of the sum
\begin{equation} \label{eq_Tr}
\Tr(B)\eq\sum_{k \geq 1} (e_{k}, B e_{k} )_\sHH \in \R_+ \cup \left\{+\infty\right\}
\end{equation}
is independent of the choice of the orthonormal basis $(e_{k})_{k \geq 1}$ of $\HH$. Let now $B \in \LH$ be arbitrary and let $B^\dag$ denote the adjoint of $B$, \ie, $B^\dag \in \LH$ such that $(B^\dag v, w)_\sHH = (v, B w)_\sHH$ for all $v,w \in \HH$. We define $|B|:=\sqrt{B^\dag B}$,
\begin{equation} \label{eq_Tr_1}
    \|B\|_{\Sone} \eq \Tr (|B|) = \sum_{k \geq 1} (e_{k}, |B|e_{k} )_\sHH,
\end{equation}
and
\begin{equation}
    \|B\|_{\HS}  \eq \Tr(B^\dag B)^{1/2} = \left( \sum_{k \geq 1} \| B e_{k}\|_{\sHH}^2\right)^{1/2}. \label{eq_Tr_2_norm}
\end{equation}
Then the Banach space $\Sone$ of {\em trace-class} operators on $\HH$ is the space of all $B \in \LH$ with $\|B\|_{\Sone} < +\infty$. In particular, if $B  \in \LH$ is positive and {\em self-adjoint},
then $B \in \Sone$ if and only if $\Tr(B)<+\infty$.
The Hilbert space $\HS$ of {\em Hilbert--Schmidt} operators on $\HH$ is the space of all $B \in \LH$ with $\|B\|_{\HS} <+ \infty$ endowed with the scalar product
\begin{equation}
    (B,C)_{\HS}  \eq \Tr(B^\dag C) \label{eq_Tr_2_scp}.
\end{equation}
Recall that $\Sone \subset \HS \subset {\mathfrak S}_\infty(\HH) \subset \LH$, where ${\mathfrak S}_\infty(\HH)$ is the vector space of compact operators on $\HH$, and that for any compact self-adjoint operator $B$ on $\HH$, we have
\[
\|B\|_{\LH} = \max_{i \ge 1} |\mu_i|, \quad  \|B\|_\HS = \left( \sum_{i \ge 1} |\mu_i|^2 \right)^{1/2}, \quad \|B\|_{\Sone}= \sum_{i \ge 1} |\mu_i|,
\]
where the $\mu_i$'s are the eigenvalues of $B$ counting multiplicities, so that
\[
\|B\|_{\LH} \le  \|B\|_\HS \le \|B\|_{\Sone} \quad  \mbox{and} \quad \|B\|_\HS  \le \|B\|_{\LH}^{1/2} \|B\|_{\Sone}^{1/2}.
\]
For all $B \in \HS$ and $C \in \HS$, it follows that $BC \in \Sone$, $CB \in \Sone$, and
\begin{equation}
\label{eq:point5}
	\Tr(BC)=\Tr(CB) \le \|B\|_{\HS} \|C\|_{\HS}.
\end{equation}

Note that $\gam \in \LH$ and $\gam$ is positive, self-adjoint, and an (orthogonal) projector since $(\gam)^2 = \gam$. Furthermore, $\gam \in \Sone$ and its trace is equal to $\J = M-\m+1$, the dimension of $\text{Span}\{\ue{m},\ldots,\ue{M}\}$. Indeed,
\begin{equation} \label{eq:Trgam}
  \Tr(\gam) = \sum_{k\ge 1}
   (\ue{k}, \gam \ue{k})_\sHH
  = \sum_{k= \m}^\M |(\ue{k}, \ue{k})_\sHH|^2
  = \J.
\end{equation}
Moreover, we have
\begin{equation} \label{eq:Trgam2}
    \|\gam\|_{\HS}^2 = \Tr\Big((\gam)^\dag \gam\Big) = \Tr((\gam)^2) = \Tr(\gam) = \J.
\end{equation}
For the following, we set, for all $s \in \R$,
\begin{equation} \label{eq:vecs}
    \forall 	\vec{\Psi} = (\psi_\m, \ldots, \psi_\M) \in {[D(\A^s)]}^\J, \, \| \A^{s} \vec{\Psi} \|_{\sHH} \eq \left( 		\sum_{i=\m}^\M \|\A^{s} \psi_i\|_{\sHH}^2	\right)^{1/2}.
\end{equation}
A particular consequence of the definitions presented above is:

\begin{lemma}[Difference of orthogonal projectors] \label{lem:expan_gamerr}
   Let $\gamma_\I$ and $\gamma_\LL$ be two finite-rank orthogonal projectors of the same rank. There holds
   \[
      \|\gamma_\LL-\gamma_\I\|_{\HS}^2 =  2 \Tr(\gamma_\LL(1-\gamma_\I)). \]
\end{lemma}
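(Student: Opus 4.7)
The plan is to expand the Hilbert--Schmidt norm squared directly using its definition via the trace, and then exploit the three defining properties of an orthogonal projector: self-adjointness ($\gamma^\dag = \gamma$), idempotency ($\gamma^2 = \gamma$), and, via $\Tr(\gamma) = \mathrm{rank}(\gamma)$, the equality of traces implied by the equal-rank hypothesis.

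More precisely, I would first use the definition~\eqref{eq_Tr_2_norm} together with $(\gamma_\LL - \gamma_\I)^\dag = \gamma_\LL - \gamma_\I$ to write
\begin{equation*}
   \|\gamma_\LL - \gamma_\I\|_\HS^2 = \Tr\bigl((\gamma_\LL - \gamma_\I)^2\bigr)
   = \Tr(\gamma_\LL^2) + \Tr(\gamma_\I^2) - \Tr(\gamma_\LL \gamma_\I) - \Tr(\gamma_\I \gamma_\LL),
\end{equation*}
where one needs to check that each term on the right is well-defined as the trace of a trace-class operator (finite rank ensures this). Using $\gamma_\LL^2 = \gamma_\LL$, $\gamma_\I^2 = \gamma_\I$, and the cyclicity of the trace~\eqref{eq:point5}, this collapses to $\Tr(\gamma_\LL) + \Tr(\gamma_\I) - 2\Tr(\gamma_\LL \gamma_\I)$.

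Next I would invoke the equal-rank assumption: since both projectors have rank equal (say) to $\J$, the same computation as in~\eqref{eq:Trgam} gives $\Tr(\gamma_\LL) = \Tr(\gamma_\I) = \J$. Substituting $\Tr(\gamma_\I) = \Tr(\gamma_\LL)$ yields
\begin{equation*}
   \|\gamma_\LL - \gamma_\I\|_\HS^2 = 2 \Tr(\gamma_\LL) - 2 \Tr(\gamma_\LL \gamma_\I) = 2 \Tr\bigl(\gamma_\LL(1 - \gamma_\I)\bigr),
\end{equation*}
which is the desired identity.

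I do not expect any real obstacle here; the statement is essentially a bookkeeping lemma. The only subtle point worth flagging is the symmetric use of the equal-rank hypothesis---one could equally well write the right-hand side as $2\Tr(\gamma_\I(1-\gamma_\LL))$ or as $\Tr(\gamma_\LL(1-\gamma_\I)) + \Tr(\gamma_\I(1-\gamma_\LL))$, and the latter symmetric form actually holds without the equal-rank assumption. The equal-rank assumption is exactly what allows the asymmetric, one-sided form stated in the lemma.
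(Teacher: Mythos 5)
Your proof is correct and follows essentially the same route as the paper's: expand $\Tr\bigl((\gamma_\LL-\gamma_\I)^2\bigr)$, use idempotency and cyclicity of the trace, then invoke the equal-rank hypothesis to set $\Tr(\gamma_\I)=\Tr(\gamma_\LL)$. Your closing remark on the symmetric form $\Tr(\gamma_\LL(1-\gamma_\I)) + \Tr(\gamma_\I(1-\gamma_\LL))$ holding without the equal-rank assumption is a correct and welcome observation that the paper does not make explicit.
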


\begin{proof}
   Note that the traces of $\gamma_\I$ and $\gamma_\LL$ are equal (to their rank). Therefore,
   \begin{align}
      \|\gamma_\LL-\gamma_\I\|_{\HS}^2 ={} & \Tr \left((\gamma_\LL-\gamma_\I)(\gamma_\LL-\gamma_\I)\right) \nonumber \\
       ={} &  \Tr(\gamma_\LL^2)+\Tr(\gamma_\I^2)-2\Tr(\gamma_\LL\gamma_\I) \nonumber \\
       ={} & \Tr(\gamma_\LL)+\Tr(\gamma_\I)-2\Tr(\gamma_\LL\gamma_\I) \nonumber \\
       ={} & 2 \left(\Tr(\gamma_\LL) - \Tr(\gamma_\LL\gamma_\I)\right) \nonumber \\
       ={} & 2 \Tr(\gamma_\LL(1-\gamma_\I)). \nonumber
   \end{align}
\end{proof}

\subsection{Conforming discretizations}
\label{sec:conf_disc}

We consider conforming approximations of problem~\eqref{eq:eig_pb} in a space $V_h\subset D(\A^{1/2})$.
The approximate $k$-th eigenpair $(\uh{k}, \lah{k}) \in
V_h \times \R_+$ is such that $(\uh{k},\uh{j}) = \delta_{kj}$, $1 \leq k,j
\leq \dim V_h$, and satisfies
\begin{equation} \label{eq:discretization}
	(\A^{1/2} \ukh, \A^{1/2} v_h) = \lkh (\ukh, v_h) \qquad \forall v_h \in V_h.
\end{equation}
We number the approximate eigenvalues in increasing order, that is
$0 < \lambda_{1h} \leq \lambda_{2h} \leq \ldots \leq \lambda_{\dim V_h h}$, while counting multiplicities.
Again, an immediate consequence of~\eqref{eq:discretization} is
\begin{equation}
  \label{eq:eig_pb_norm_disc}
  \| A^{1/2} \ukh \|_\sHH^2 = \lkh, \qquad \forall 1 \leq k \leq \dim V_h,
\end{equation}
and, since the approximation is conforming, there holds
\begin{equation}
    \label{eq:conf_approx}
    \la{k} \le \lah{k} \qquad \forall 1 \leq k
    \leq \dim V_h.
\end{equation}

The {\em approximate eigenvalues} we are interested in are denoted by $(\lah{\m},\ldots,\lah{\M})$, where of course we suppose $\dim V_h \geq \M$, and a
corresponding set of orthonormal {\em approximate eigenvectors} by  \be
    \Phih \eq (\uh{m},\ldots,\uh{M}).
\ee
The {\em approximate density matrix} is then defined by
\begin{equation}
  \label{def:gamh}
\forall v \in \HH, \quad  \gamh v \eq \sum_{i = \m}^\M (v,\uh{i})_\sHH \uh{i}.
\end{equation}
\begin{remark}[Discrete gap condition] If the discrete gap condition
   \label{as:gap_disc}
  $\lah{(m-1)}<\lah{m}$ if $m>1$ and $\lah{\M}<\lah{(\M+1)}$ is fulfilled, then the approximate density matrix $\gamma_h$ is uniquely defined. Note that this uniqueness is not needed at this point, but is a consequence of
  Assumption~\ref{as:gap_cont_disc} that is required 
  starting from the upcoming Theorem~\ref{th:DM_bounds} onward.
\end{remark}
Like the density matrix $\gam$, the approximate density matrix $\gamh$ is an orthogonal projector, hence $\gamh^2 = \gamh$, and there also holds $\Tr(\gamh) = \Tr(\gamh^2) = \J$. We will measure the error between the exact and approximate density matrices using the quantities
\[
\|\gam-\gamh\|_{\HS} \quad \mbox{and} \quad \|A^{1/2}(\gam-\gamh)\|_{\HS}.
\]
The second quantity is indeed justified as we have:
\begin{lemma}[Operators $A^{1/2}\gam$ and $A^{1/2}\gamh$] \label{lem_LH} Let the density matrices $\gam$ and $\gamh$ be respectively defined by~\eqref{def:gamma0} and~\eqref{def:gamh}. Then there holds $A^{1/2}\gam$, $A^{1/2}\gamh \in \HS$ and
\begin{equation}
    \|A^{1/2}\gam\|_{\HS}^2  = \sum_{i=\m}^\M \li, \quad \|A^{1/2}\gamh\|_{\HS}^2  = \sum_{i=\m}^\M \lih. \label{eq_A12_S}
\end{equation}
\end{lemma}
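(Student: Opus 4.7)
The plan is to compute both Hilbert--Schmidt norms directly from the definition~\eqref{eq_Tr_2_norm} by choosing conveniently adapted orthonormal bases of $\HH$, exploiting the facts that both $\gam$ and $\gamh$ are finite-rank projectors whose ranges lie inside $D(A^{1/2})$, so that $A^{1/2}\gam$ and $A^{1/2}\gamh$ extend to everywhere-defined bounded operators on $\HH$.

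First I would handle $A^{1/2}\gam$. The orthonormal basis $(\ue{k})_{k\ge 1}$ of $\HH$ from~\eqref{eq:eig_pb} is the obvious candidate, since $\gam$ acts diagonally on it: from~\eqref{def:gamma0}, $\gam\ue{k}=\ue{k}$ if $\m\le k\le \M$ and $\gam\ue{k}=0$ otherwise. Consequently $A^{1/2}\gam\ue{k}=A^{1/2}\ue{k}$ or $0$ accordingly, and the identity~\eqref{eq:eig_pb_norm} gives $\|A^{1/2}\ue{k}\|_\sHH^2=\la{k}$. Summing yields
\[
\|A^{1/2}\gam\|_\HS^2 = \sum_{k\ge 1}\|A^{1/2}\gam\ue{k}\|_\sHH^2 = \sum_{i=\m}^\M \la{i},
\]
which in particular shows that the sum is finite, so $A^{1/2}\gam\in\HS$.

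The treatment of $A^{1/2}\gamh$ proceeds analogously, but since the $\uh{i}$'s are not a basis of $\HH$ I would first complete the orthonormal family $(\uh{\m},\ldots,\uh{\M})$ to an orthonormal basis $(f_k)_{k\ge 1}$ of $\HH$ (this is possible since $\HH$ is separable). By construction, $\gamh f_k=f_k$ for $k=\m,\ldots,\M$ (where $f_k=\uh{k}$) and $\gamh f_k=0$ for the remaining indices, because the completion vectors are orthogonal to $\mathrm{Span}\{\uh{\m},\ldots,\uh{\M}\}$. Using~\eqref{eq:eig_pb_norm_disc} (which requires $V_h\subset D(A^{1/2})$, as assumed in Section~\ref{sec:conf_disc}) to evaluate $\|A^{1/2}\uh{i}\|_\sHH^2=\lih$, the computation of $\|A^{1/2}\gamh\|_\HS^2$ along this basis gives exactly $\sum_{i=\m}^\M \lih$.

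There is no real obstacle here; the only point worth articulating carefully is why $A^{1/2}\gam$ and $A^{1/2}\gamh$ are well-defined elements of $\LH$ in the first place, so that the definition~\eqref{eq_Tr_2_norm} applies and the trace is independent of the basis. This follows immediately from the fact that both operators have finite-dimensional ranges contained in $D(A^{1/2})$ (by definition of $\gam$, $\gamh$ and the inclusion $V_h\subset D(A^{1/2})$), hence are bounded. The identities then come for free from the diagonal action of the projectors on the chosen bases.
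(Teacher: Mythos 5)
Your proof is correct and follows essentially the same approach as the paper: choose the orthonormal basis $(\ue{k})$ for $\gam$ and a completion of $(\uh{\m},\ldots,\uh{\M})$ for $\gamh$, exploit the diagonal action of the projectors, and invoke~\eqref{eq:eig_pb_norm} and~\eqref{eq:eig_pb_norm_disc}. The extra remark on why $A^{1/2}\gam,A^{1/2}\gamh\in\LH$ (finite rank, range in $D(A^{1/2})$) is a welcome clarification that the paper leaves implicit.
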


\begin{proof} Taking in~\eqref{eq_Tr_2_norm} $e_k = \ue{k}$, since $\gam \ue{k} = \ue{k}$ if $\m \leq k \leq M$ and $0$ otherwise, and employing~\eqref{eq:eig_pb_norm},
\[
    \|A^{1/2} \gam\|_{\HS}^2 =
      \sum_{k \geq 1} \| A^{1/2} \gam \ue{k}\|_\sHH^2 =
      \sum_{k=\m}^\M \| A^{1/2} \ue{k}\|_\sHH^2
      = \sum_{k=\m}^\M \lk.
\]
The result for $\|A^{1/2}\gamh\|_{\HS}$ is obtained similarly upon completing $\ukh$ to an orthonormal basis of $\HH$ used in~\eqref{eq_Tr_2_norm} and employing~\eqref{eq:eig_pb_norm_disc}.
\end{proof}

The following equalities will be useful in the upcoming analysis:

\begin{lemma}[Orthogonal projector and Hilbert--Schmidt norm] \label{lem_orth_proj}
   Let $\gamma_\J$ be the orthogonal projector of rank $\J = \M-\m+1$ onto $\Span \{ \varphi^\J_{\m},  \ldots,  \varphi^\J_{\M}\}$, where $\varphi^\J_{\m},  \ldots,  \varphi^\J_{\M} \in \HH$ are orthonormal. Completing $\{ \varphi^\J_{i} \}_{i=\m,\ldots,\M}$ to an orthonormal basis of $\HH$ denoted by $\{ \varphi^\J_{i} \}_{i\ge 1}$, there holds
   \begin{equation}
      \label{eq:gamma_v}
     \forall v\in\HH, \quad  \|\gamma_\J v\|_\sHH^2 = \sum_{i \geq 1} \left|\left(\gamma_\J v, \varphi^\J_{i}\right)_\sHH\right|^2 = \sum_{i=\m}^\M \left| \left(v,\varphi^\J_{i}\right)_\sHH\right|^2.
   \end{equation}
   If in addition $\varphi^\J_{\m},  \ldots,  \varphi^\J_{\M} \in D(A^{1/2})$, then there holds
   \begin{align}
      \|A^{1/2} \gamma_\J\|_{\HS}^2 & =
      \sum_{i \geq 1} \| A^{1/2} \gamma_\J \varphi^\J_i\|_\sHH^2 =
      \sum_{i=\m}^\M \| A^{1/2} \varphi^\J_i\|_\sHH^2 =
      \sum_{k\ge 1} \sum_{i=\m}^\M \left| \left(A^{1/2} \varphi^\J_{i},\ue{k}\right) \right|^2 
      \label{eq:gammaK_A}
      \\ & 
      =\sum_{k\ge 1} \sum_{i=\m}^\M \la{k} \left| \left(\varphi^\J_{i},\ue{k}\right) \right|^2.
      \nonumber
   \end{align}
\end{lemma}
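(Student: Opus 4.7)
The plan is to verify all the equalities by direct computation using Parseval's identity together with the characteristic action of an orthogonal projector on its defining orthonormal basis. The statement is essentially a bookkeeping lemma, so I expect no genuine obstacle; the only point requiring mild care is the use of self-adjointness of $A^{1/2}$ in the final reduction.

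First, since $\HH$ is separable, I would complete $\{\varphi^\J_i\}_{i=\m,\ldots,\M}$ to an orthonormal basis $\{\varphi^\J_i\}_{i\ge 1}$ of $\HH$. For the first chain of equalities~\eqref{eq:gamma_v}, I would apply the Parseval identity~\eqref{eq_Pars} to $\gamma_\J v$ with respect to this basis, yielding $\|\gamma_\J v\|_\sHH^2 = \sum_{i\ge 1} |(\gamma_\J v, \varphi^\J_i)_\sHH|^2$. Then, using that $\gamma_\J$ is self-adjoint together with $\gamma_\J \varphi^\J_i = \varphi^\J_i$ for $\m \le i \le \M$ and $\gamma_\J \varphi^\J_i = 0$ otherwise, I would rewrite $(\gamma_\J v, \varphi^\J_i)_\sHH = (v, \gamma_\J \varphi^\J_i)_\sHH$, which vanishes outside the cluster index range, collapsing the sum to $\sum_{i=\m}^\M |(v, \varphi^\J_i)_\sHH|^2$.

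For~\eqref{eq:gammaK_A}, I would first invoke the definition~\eqref{eq_Tr_2_norm} of the Hilbert--Schmidt norm with the same basis $\{\varphi^\J_i\}_{i\ge 1}$, obtaining $\|A^{1/2}\gamma_\J\|_{\HS}^2 = \sum_{i\ge 1} \|A^{1/2}\gamma_\J \varphi^\J_i\|_\sHH^2$; the same observation on the action of $\gamma_\J$ reduces the sum to $\sum_{i=\m}^\M \|A^{1/2} \varphi^\J_i\|_\sHH^2$, which is finite precisely because $\varphi^\J_i \in D(A^{1/2})$ by hypothesis. Next, I would apply Parseval once again, this time in the eigenbasis $\{\ue{k}\}_{k\ge 1}$, to get $\|A^{1/2}\varphi^\J_i\|_\sHH^2 = \sum_{k\ge 1} |(A^{1/2}\varphi^\J_i, \ue{k})_\sHH|^2$. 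Finally, invoking self-adjointness of $A^{1/2}$ together with the identity $A^{1/2}\ue{k} = \sqrt{\la{k}}\,\ue{k}$ from~\eqref{eq_As_expr}, I would write $(A^{1/2}\varphi^\J_i, \ue{k})_\sHH = (\varphi^\J_i, A^{1/2}\ue{k})_\sHH = \sqrt{\la{k}}\,(\varphi^\J_i, \ue{k})_\sHH$, whose square yields $\la{k}|(\varphi^\J_i, \ue{k})_\sHH|^2$ and completes the claimed identity.

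The only subtlety, as already noted, is ensuring that $(A^{1/2}\varphi^\J_i, \ue{k})_\sHH = (\varphi^\J_i, A^{1/2}\ue{k})_\sHH$; this follows because $\ue{k} \in D(A^s)$ for all $s \in \R$ (noted just after~\eqref{eq_As_expr}) and $\varphi^\J_i \in D(A^{1/2})$ by assumption, so both sides are well-defined and equal by self-adjointness of $A^{1/2}$. All series involved have non-negative summands, so exchanging orders of summation at the final step is unconditional by Tonelli's theorem.
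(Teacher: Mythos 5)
Your proof is correct and follows essentially the same route as the paper's (the paper's one-line justification cites~\eqref{eq_Tr_2_norm}, \eqref{eq_Pars}, and~\eqref{eq_As_dom_norm}, which is exactly the chain you spell out; your final step via self-adjointness of $A^{1/2}$ and $A^{1/2}\ue{k}=\sqrt{\la{k}}\,\ue{k}$ simply rederives the content of~\eqref{eq_As_dom_norm} rather than citing it). The only small thing you omit, which the paper states explicitly, is the preliminary observation that $A^{1/2}\gamma_\J\in\LH$ because $\mathrm{Ran}(\gamma_\J)\subset D(A^{1/2})$ is finite-dimensional, which is what makes the Hilbert--Schmidt norm a priori meaningful before computing it.
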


\begin{proof} Result~\eqref{eq:gamma_v} follows as in~\eqref{eq_Pars} and since $\gamma_\J$ is a projector. For the other claim, let us first note that $A^{1/2} \gamma_\J \in \LH$, since $\mbox{Ran}(\gamma_J) \subset D(A^{1/2})$. Then, \eqref{eq:gammaK_A} is a consequence of~\eqref{eq_Tr_2_norm}, \eqref{eq_Pars}, and~\eqref{eq_As_dom_norm}.
\end{proof}

For some of the results presented in the following, we will need to assume that the approximate eigenvectors are not orthogonal to the exact ones:  \begin{assumption}[Non-orthogonality of the exact and approximate eigenspaces]
   \label{as:nonorth} There holds
   \[
   \forall v \in \Span\{\ue{\m},\ldots,\ue{\M} \}\backslash \{0\}, \quad
   \| \gamh v\|_\sHH \neq 0.
   \]
\end{assumption}
This assumption guarantees that every exact eigenvector is not orthogonal to the whole space spanned by the approximate eigenvectors. Note that this assumption, which in practice cannot be easily checked, is not needed for the first upper bound~\eqref{eq:3.5.1} below, which is used in Section~\ref{sec:Laplace_apost} for finite element discretizations in Case~II and in the planewave discretization in Section~\ref{sec:PW_theory}. However, in Case~I in the finite element discretization, we prefer to use the improved bound based on~\eqref{eq:3.5.1bis}, which requires this assumption. Assumption~\ref{as:nonorth} is also needed to show an equivalence between eigenvectors and density matrix errors (Lemma~\ref{lem:norm_eq2}), as well as to derive a lower bound for the density matrix error (Theorem~\ref{th:lower_bound_DM}).

\section{Density matrix error and residuals}

We develop in this section the links between the eigenvector errors and the density matrix errors. We will also define the residual and its dual norm, both for single and for cluster eigenpairs.

\subsection{Eigenvector error and density matrix error equivalence}
\label{sec:fct_density_matrix_equivalence}

Since there is a choice in the (exact and approximate) eigenvectors  of $\A$, in particular for multiple eigenvalues, the approximate
eigenvectors $\Phih$ might be far from the exact ones $\Phie$ individually, which is measured in the energy norm $\| \A^{1/2} (\Phie - \Phih) \|_{\sHH}$, while the density matrices and the eigenvalues are very close, or even equal. Traditionally, though, the error estimates are presented for the eigenvectors. Therefore, we first show that, given the exact eigenvectors $\Phie$, there exists a choice of approximate eigenvectors $\Phiho = (\uho{\m},\ldots,\uho{\M})$ constructed from $\Phih$ for which the error in the energy norm $\| \A^{1/2} (\Phie - \Phiho) \|_{\sHH}$ is equivalent to $\| \A^{1/2} (\gam - \gamh) \|_{\HS}$.
This is valid under the sole assumption that the two eigenspaces are not orthogonal with respect to the $\HH$ scalar product, as presented in Assumption~\ref{as:nonorth}.
We also show that the density matrix error $\| \A^{1/2} (\gam - \gamh) \|_{\HS}$ can be easily expressed in terms of the eigenvectors.

Let us define the following unitary-transformed approximate eigenvectors by
\begin{equation}
   \label{eq:unitary_min}
    \Phiho \eq (\uho{\m},\ldots, \uho{\M} )\eq \text{argmin}_{\matr{U}\in O(\J)} \|\matr{U}\Phih - \Phie\|_{\sHH},
\end{equation}
where we recall that $O(\J)$ denotes the group of orthogonal matrices of order $\J$. From~\cite[Lemma 4.3]{Cances2012-bo}, the minimization problem~\eqref{eq:unitary_min} has a unique solution and therefore $\Phiho$ is well defined as soon as Assumption~\ref{as:nonorth} is satisfied. Note that from this definition and the fact that the approximate eigenvectors are orthonormal, the rotated approximate eigenvectors are also orthonormal. Also, the approximate density matrix $\gamh$ given by~\eqref{def:gamh} can be equivalently written in terms of the rotated eigenvectors $\Phiho$ as
\begin{equation}
  \label{eq:rotated_DM}
  \forall v \in \HH, \quad \gamh v =   \sum_{i = \m}^\M (v, \uho{i})_\sHH \uho{i}.
\end{equation}

To relate $\| \A^{1/2} (\gam - \gamh) \|_{\HS}$ to the energy norm of the eigenvector errors $\norm{\A^{1/2}(\Phie-\Phiho)}_{\sHH}$, we first need a preliminary lemma, expressing these two quantities in terms of the exact and approximate eigenvalues and the eigenvectors of the operator $A$.

\begin{lemma}[Link between eigenvalue and eigenvector errors]
   \label{lem:2.6} 
   Let Assumption~\ref{as:gap} hold and 
   let the density matrices $\gam$ and $\gamh$ be respectively defined by~\eqref{def:gamma0} and~\eqref{def:gamh} and the eigenvectors $\Phie$ and $\Phiho$ by~\eqref{eq:ex_eigenvectors} and~\eqref{eq:unitary_min}. Then there holds
      \begin{equation}
      \|A^{1/2}(\gam - \gamh)\|_{\HS}^2 = \sum_{i=\m}^\M
      (\lah{i}-\la{i}) + 2\sum_{i=\m}^\M \la{i} \|(1-\gamh)\ue{i}\|_\sHH^2, \label{eq:Agamhgam}
   \end{equation}
and
   \begin{equation}
         \norm{\A^{1/2}(\Phie-\Phiho)}_{\sHH}^2 = \sum_{i=\m}^\M (\lah{i} - \la{i}) + \sum_{i=\m}^\M \la{i} \norm{\ue{i} - \uho{i}}_\sHH^2. \label{eq:A12Phi}
   \end{equation}

\end{lemma}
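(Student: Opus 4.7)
Both identities will follow from direct algebraic expansion, using the spectral decomposition of $\gam$, the definition of $\gamh$, the eigenvalue equations $A\ue{i} = \la{i}\ue{i}$, and the Galerkin identity $(A^{1/2}\uh{j}, A^{1/2}\uh{k})_\sHH = \lah{j}\delta_{jk}$ coming from~\eqref{eq:discretization}. Assumption~\ref{as:gap} enters only through the well-posedness of $\gam$, and the proof does not need to invoke Assumption~\ref{as:nonorth}.

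For~\eqref{eq:Agamhgam}, I would first expand the squared Hilbert--Schmidt norm via the scalar product~\eqref{eq_Tr_2_scp} as
\[
\|A^{1/2}(\gam-\gamh)\|_{\HS}^2 = \|A^{1/2}\gam\|_{\HS}^2 + \|A^{1/2}\gamh\|_{\HS}^2 - 2\bigl(A^{1/2}\gam,\,A^{1/2}\gamh\bigr)_{\HS},
\]
which is legitimate since $A^{1/2}\gam, A^{1/2}\gamh \in \HS$ by Lemma~\ref{lem_LH}. That same lemma yields $\|A^{1/2}\gam\|_{\HS}^2 = \sum_{i=\m}^\M \la{i}$ and $\|A^{1/2}\gamh\|_{\HS}^2 = \sum_{i=\m}^\M \lah{i}$. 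To evaluate the cross term I would expand the Hilbert--Schmidt trace in the orthonormal basis $\{\ue{k}\}_{k \geq 1}$: only indices $k \in \{\m, \ldots, \M\}$ contribute since $\gam\ue{k} = 0$ otherwise, and plugging in $A^{1/2}\gamh\ue{k} = \sum_{j=\m}^\M (\ue{k}, \uh{j})_\sHH A^{1/2}\uh{j}$ and using $(A^{1/2}\ue{k}, A^{1/2}\uh{j})_\sHH = \la{k}(\ue{k}, \uh{j})_\sHH$ (since $\ue{k} \in D(A)$) gives the cross-product value $\sum_{k=\m}^\M \la{k} \|\gamh \ue{k}\|_\sHH^2$. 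The orthogonal decomposition $1 = \|\ue{k}\|_\sHH^2 = \|\gamh\ue{k}\|_\sHH^2 + \|(1-\gamh)\ue{k}\|_\sHH^2$, valid because $\gamh$ is an orthogonal projector, then converts the sum into the form required by~\eqref{eq:Agamhgam}.

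For~\eqref{eq:A12Phi}, I would expand coordinate-wise as
\[
\|A^{1/2}(\ue{i}-\uho{i})\|_\sHH^2 = \la{i} + \|A^{1/2}\uho{i}\|_\sHH^2 - 2\la{i}(\ue{i},\uho{i})_\sHH,
\]
where the cross term again uses $A\ue{i}=\la{i}\ue{i}$. The key step is then showing $\sum_{i=\m}^\M \|A^{1/2}\uho{i}\|_\sHH^2 = \sum_{i=\m}^\M \lah{i}$: writing $\uho{i} = \sum_{j=\m}^\M \matr{U}_{ji} \uh{j}$ with $\matr{U} \in O(\J)$ the minimizer from~\eqref{eq:unitary_min}, the Galerkin identity together with $\matr{U}^T\matr{U} = \matr{1}_\J$ yields this directly. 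Combining with $\|\ue{i}-\uho{i}\|_\sHH^2 = 2 - 2(\ue{i},\uho{i})_\sHH$ and comparing the two sides reduces the claim to an elementary algebraic rearrangement. The only mild obstacle throughout is tracking trace-class and Hilbert--Schmidt memberships when manipulating products involving the unbounded operator $A^{1/2}$; this is entirely controlled by Lemma~\ref{lem_LH}, which is why no additional operator-theoretic machinery is needed.
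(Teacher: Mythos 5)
Your proof is correct and follows essentially the same route as the paper's: expand the Hilbert--Schmidt norm, evaluate the cross term via the eigenvalue equation~\eqref{eq:eig_pb_weak} and Lemma~\ref{lem_LH}, and use the Pythagorean/idempotence identity for the projector $1-\gamh$ to obtain~\eqref{eq:Agamhgam}, then a coordinate-wise expansion for~\eqref{eq:A12Phi}. The only cosmetic difference is that for the step $\sum_{i=\m}^\M \|A^{1/2}\uho{i}\|_\sHH^2 = \sum_{i=\m}^\M \lah{i}$ you use the explicit $\matr{U}\in O(\J)$ parametrization and the discrete Galerkin orthogonality, whereas the paper completes $\Phiho$ to an orthonormal basis of $\HH$ and invokes Lemma~\ref{lem_LH}; both arguments are equivalent.
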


\begin{remark}
   Note that Assumption~\ref{as:gap} is needed for $\gam$ and $\Phie$ to be well-defined, but is not needed stricto sensu in the proof of Lemma~\ref{lem:2.6}, which is still valid in the case where there is no discrete or continuous gap between the eigenvalue cluster and the rest of the spectrum. 
   More precisely, this Lemma would hold for any orthogonal projectors $\gam,\gamh$ of rank $M-m+1$ and associated orthogonal vectors $\Phie$ and $\Phiho$  such that ${\rm Ran}(\gam) = {\rm Span}(\Phie)$, ${\rm Ran}(\gamh) = {\rm Span}(\Phih)$, and 
   satisfying for $i = m, \ldots, M$, $(A^{1/2} \ue{i}, A^{1/2} v) = \la{i} (\ue{i},v)$ for all $v\in D(A^{1/2})$, and $(A^{1/2} \uh{i}, A^{1/2} v_h) = \lah{i} (\uh{i},v_h)$ for all $v_h\in V_h$.
   For simplicity of presentation, Lemma~\ref{lem:2.6} is kept without these generalizations.
\end{remark}

\begin{proof}
   To show~\eqref{eq:Agamhgam}, we first expand $\norm{\A^{1/2}(\gam-\gamh)}_{\HS}^2$ and then use~\eqref{eq_A12_S} to obtain
   \begin{align}
      \norm{\A^{1/2}(\gam-\gamh)}_{\HS}^2 = {} & \norm{\A^{1/2}\gam}_{\HS}^2 -2 (\A^{1/2}\gam,\A^{1/2}\gamh)_{\HS} + \norm{\A^{1/2}\gamh}_{\HS}^2 \nonumber \\
      = {} & \sum_{i=\m}^\M \li - 2 \sum_{i \geq 1} (\A^{1/2}\gam \ue{i},\A^{1/2}\gamh \ue{i})  + \sum_{i=\m}^\M \lih \label{eq:xxx} \\
      = {} & \sum_{i=\m}^\M \li - 2 \sum_{i=\m}^\M (\A^{1/2} \ue{i},\A^{1/2}\gamh \ue{i})  + \sum_{i=\m}^\M \lih,
      \nonumber
   \end{align}
   where we have also used that $\gam \ue{i} = \ue{i}$ if $\m \leq i \leq M$ and $0$ otherwise.
   Employing~\eqref{eq:eig_pb_weak} leads to
   \begin{align} 
      \norm{\A^{1/2}(\gam-\gamh)}_{\HS}^2  &= \sum_{i=\m}^\M (\la{i} + \lah{i})
      - 2 \sum_{i=\m}^\M \la{i} \left(  \ue{i}, \gamh \ue{i}   \right)_\sHH  \nonumber
      \\& =  \sum_{i=\m}^\M (\lah{i} - \la{i})
      + 2 \sum_{i=\m}^\M \la{i} \left(
      1-
      \left(  \ue{i}, \gamh \ue{i}   \right)_\sHH
      \right). \label{eq_A12ggh_dev}
   \end{align}
   Writing $1 = (\ue{i},\ue{i})$ and observing that $(1-\gamh) = (1-\gamh)^2$, we obtain
   \begin{align}
      \norm{\A^{1/2}(\gam-\gamh)}_{\HS}^2  = & \sum_{i=\m}^\M (\lah{i} - \la{i})
      + 2 \sum_{i=\m}^\M \la{i}
      \left(  \ue{i}, (1-\gamh)^2 \ue{i}   \right)_\sHH, \nonumber
   \end{align}
   which leads to~\eqref{eq:Agamhgam}, using that $(1-\gamh)$ is self-adjoint.

   To show~\eqref{eq:A12Phi}, we complete $\Phiho$ to an orthonormal basis $(\varphi_{ih}^0)_{i \ge 1}$ of $\HH$, employ this basis in~\eqref{eq_Tr_2_norm}, and use~\eqref{eq_A12_S},
   \[
   \sum_{i=\m}^\M \| A^{1/2} \uho{i}\|_\sHH^2 = \sum_{i \geq 1} \| A^{1/2} \gamh \uho{i}\|_\sHH^2 = \|A^{1/2}\gamh\|_{\HS}^2 = \sum_{i=\m}^\M \lah{i}.
   \]
   We next use definition~\eqref{eq:vecs} together with~\eqref{eq:eig_pb_norm} and~\eqref{eq:eig_pb_weak} to see that
\begin{align*}
   \norm{\A^{1/2}(\Phie-\Phiho)}_{\sHH}^2 & = \sum_{i=\m}^\M \left[
      \norm{A^{1/2} \ue{i}}^2 - 2 (A^{1/2} \ue{i}, A^{1/2} \uho{i})  + \norm{A^{1/2} \uho{i}}^2 \right] \\ &
      =  \sum_{i=\m}^\M \big[\lah{i} + \la{i} - 2 \la{i}  (\ue{i},\uho{i})_\sHH\big].   
\end{align*}
Using that for all $i=\m,\ldots,\M$, $\uho{i}$ as well as $\ue{i}$ are of norm 1 leads to
\begin{align*}
   \norm{\A^{1/2}(\Phie-\Phiho)}_{\sHH}^2
  & =  \sum_{i=\m}^\M (\lah{i} - \la{i}) + 2 \sum_{i=\m}^\M \la{i} \left(1 -  (\ue{i},\uho{i})_\sHH
   \right) \\ &
   =  \sum_{i=\m}^\M (\lah{i} - \la{i}) + \sum_{i=\m}^\M \la{i} \norm{\ue{i} - \uho{i}}_\sHH^2,
\end{align*}
which concludes the proof.
\end{proof}

The following lemma relates the errors on the density matrix to the errors on the eigenvectors.

\begin{lemma}[Link between density matrix and eigenvector errors]
  \label{lem:norm_eq2} 
Let  the assumptions of Lemma~\ref{lem:2.6} hold, 
  together with Assumption~\ref{as:nonorth}. 
  Then
\begin{equation}
\label{eq:eqL2}
   \frac{1}{\sqrt{2}} \norm{\gam-\gamh}_{\HS} \le
		 \norm{\Phie-\Phiho}_{\sHH} \le \norm{\gam-\gamh}_{\HS}.
\end{equation}
Moreover,
\begin{multline}
\label{eq:eqH1}
   \frac{1}{\sqrt{2}} \norm{\A^{1/2}(\gam-\gamh)}_{\HS}
    \le \norm{\A^{1/2}(\Phie-\Phiho)}_{\sHH} 
   \\ 
   	\le
      \left( 1+ \frac{\la{\M}}{4\la{\m}}\|\gam - \gamh\|_{\HS}^2 \right)^{1/2}
   \norm{\A^{1/2}(\gam-\gamh)}_{\HS},
\end{multline}
and in particular
\begin{equation}
   \label{eq:eqH1_bonus}
   \norm{\A^{1/2}(\Phie-\Phiho)}_{\sHH}
   	\le
      \left( 1+ \frac{\J \la{\M}}{\la{\m}} \right)^{1/2}
   \norm{\A^{1/2}(\gam-\gamh)}_{\HS}.
\end{equation}
\end{lemma}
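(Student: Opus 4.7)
The plan is to express all four norms as simple scalar combinations of the quantities appearing in Lemma~\ref{lem:2.6}, and then to compare them pointwise using the structural property of the minimizer in~\eqref{eq:unitary_min}.

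\emph{Step 1 (expressing the norms).} I would set $E := \sum_{i=\m}^\M (\lah{i}-\la{i}) \ge 0$, $a_i := \|(1-\gamh)\ue{i}\|_\sHH^2$, and $b_i := \|\ue{i}-\uho{i}\|_\sHH^2$. Lemma~\ref{lem:2.6} directly yields
\[
\|A^{1/2}(\gam-\gamh)\|_\HS^2 = E + 2\sum_{i=\m}^\M \la{i}\, a_i, \qquad \|A^{1/2}(\Phie-\Phiho)\|_\sHH^2 = E + \sum_{i=\m}^\M \la{i}\, b_i.
\]
The analogous $\HH$-level identities $\|\gam-\gamh\|_\HS^2 = 2\sum_{i=\m}^\M a_i$ and $\|\Phie-\Phiho\|_\sHH^2 = \sum_{i=\m}^\M b_i$ follow, respectively, from Lemma~\ref{lem:expan_gamerr} applied with $\gamma_\LL = \gam$, $\gamma_\I = \gamh$ (noting that $\Tr(\gam(1-\gamh)) = \sum_i \|(1-\gamh)\ue{i}\|_\sHH^2$) and directly from definition~\eqref{eq:vecs}.

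\emph{Step 2 (key pointwise comparison).} The core of the argument is the pair of pointwise bounds $a_i \le b_i \le 2 a_i$ for every $i \in \{\m,\ldots,\M\}$. The inequality $a_i \le b_i$ is immediate: since $\gamh\uho{i} = \uho{i}$, one has $(1-\gamh)\ue{i} = (1-\gamh)(\ue{i}-\uho{i})$, and $1-\gamh$ is a norm-decreasing orthogonal projector. The inequality $b_i \le 2 a_i$ is the main obstacle and exploits the optimality of $\Phiho$. I would introduce the overlap matrix $\matr{S} \in \R^{\J \times \J}$ with $S_{ij} = (\uh{i}, \ue{j})_\sHH$; by Assumption~\ref{as:nonorth}, $\matr{S}$ is invertible. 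From the expansion $\|\matr{U}\Phih-\Phie\|_\sHH^2 = 2\J - 2\Tr(\matr{U}\matr{S})$, the minimization in~\eqref{eq:unitary_min} amounts to maximizing $\Tr(\matr{U}\matr{S})$ over $\matr{U} \in O(\J)$. Writing the SVD $\matr{S} = \matr{W}_1 \matr{D} \matr{W}_2^T$ with singular values $d_1,\ldots,d_\J \in (0,1]$, the unique maximizer is $\matr{U}^* = \matr{W}_2\matr{W}_1^T$, so that $\matr{T} := \matr{U}^*\matr{S} = \matr{W}_2 \matr{D} \matr{W}_2^T$ is symmetric positive definite with eigenvalues $d_1,\ldots,d_\J$. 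Since $T_{ij} = (\uho{i}, \ue{j})_\sHH$, a short calculation yields $b_i = 2(1-T_{ii})$ and $a_i = 1-(T^2)_{ii}$ (the latter from $\|\gamh\ue{i}\|_\sHH^2 = \sum_j T_{ji}^2 = (T^2)_{ii}$ using the symmetry of $\matr{T}$), so that $b_i - 2 a_i = 2((T^2)_{ii} - T_{ii}) \le 0$ because $\matr{T}^2 \preceq \matr{T}$ in the positive semi-definite order.

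\emph{Step 3 (assembly).} Combining Step~1 with the pointwise bounds from Step~2, the nonnegativity of $E$, and the positivity of the $\la{i}$, each of the four inequalities in~\eqref{eq:eqL2}--\eqref{eq:eqH1} then follows in a single line. For example, the lower bound in~\eqref{eq:eqH1} reads $\|A^{1/2}(\gam-\gamh)\|_\HS^2 = E + 2\sum_i \la{i} a_i \le E + 2\sum_i \la{i} b_i \le 2(E + \sum_i \la{i} b_i) = 2\|A^{1/2}(\Phie-\Phiho)\|_\sHH^2$. The upper bound in~\eqref{eq:eqH1} in fact holds with constant $1$, from which the claimed factor $(1 + \frac{\la{\M}}{4\la{\m}}\|\gam-\gamh\|_\HS^2)^{1/2}$ follows a fortiori. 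Finally, for the consequence~\eqref{eq:eqH1_bonus}, I would note that $\|\gam-\gamh\|_\HS^2 = 2\J - 2\Tr(\gam\gamh) \le 2\J$ and $\la{\M}/\la{\m} \ge 1$, hence $\frac{\la{\M}}{4\la{\m}}\|\gam-\gamh\|_\HS^2 \le \frac{\J\la{\M}}{2\la{\m}} \le \frac{\J\la{\M}}{\la{\m}}$.
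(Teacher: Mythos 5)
Your proposal is correct, and in fact it is \emph{sharper} than the paper's own proof of the upper bound in~\eqref{eq:eqH1}.

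Both arguments start from Lemma~\ref{lem:2.6} and reduce matters to comparing, term by term, the per-index quantities $a_i = \|(1-\gamh)\ue{i}\|_\sHH^2$ and $b_i = \|\ue{i}-\uho{i}\|_\sHH^2$, and both use the elementary lower bound $a_i \le b_i$ via the contractivity of the projector $1-\gamh$ (after noting $(1-\gamh)\ue{i}=(1-\gamh)(\ue{i}-\uho{i})$). The divergence is in the upper bound. The paper's appendix writes $b_i = a_i + \|\gamh(\ue{i}-\uho{i})\|_\sHH^2$, so the gap $\sum_i\la{i}(b_i-2a_i)$ becomes $\sum_i\la{i}\big(\|\gamh(\ue{i}-\uho{i})\|_\sHH^2 - a_i\big)$; it then discards the nonpositive contribution $-\sum_i\la{i}a_i$ and controls $\sum_i\la{i}\|\gamh(\ue{i}-\uho{i})\|_\sHH^2$ via the symmetry identity~\eqref{eq:estimL2} and Cauchy--Schwarz, which is what produces the prefactor $\big(1+\tfrac{\la{\M}}{4\la{\m}}\|\gam-\gamh\|_\HS^2\big)^{1/2}$. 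You instead prove the pointwise inequality $b_i\le 2a_i$: with the SVD $\matr{S}=\matr{W}_1\matr{D}\matr{W}_2^T$, the optimal rotation gives the symmetric positive semidefinite $\matr{T}=\matr{W}_2\matr{D}\matr{W}_2^T$ with spectrum in $(0,1]$, so $\matr{T}-\matr{T}^2\succeq 0$ and hence $(\matr{T}^2)_{ii}\le T_{ii}$; combined with $a_i=1-(\matr{T}^2)_{ii}$ and $b_i=2(1-T_{ii})$ this yields $b_i-2a_i=2\big((\matr{T}^2)_{ii}-T_{ii}\big)\le 0$. This makes the term the paper discards manifestly nonpositive and, since the comparison is per-index with positive weights $\la{i}$, gives the upper bound in~\eqref{eq:eqH1} with prefactor exactly $1$, which implies the stated bound a fortiori. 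Your SVD derivation is also self-contained, whereas the paper only cites~\cite[Lemma 4.3]{Cances2012-bo} for the symmetry of $\matr{T}$. The final assembly of~\eqref{eq:eqL2} and the deduction of~\eqref{eq:eqH1_bonus} via $\|\gam-\gamh\|_\HS^2\le 2\J$ are fine (you even have a factor-$2$ margin over the paper's cruder $\le 4\J$). One could only add a word noting the paper stated the weaker constant, presumably because its argument does not isolate the pointwise estimate $b_i\le 2a_i$.
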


A proof for~\eqref{eq:eqL2} in the case $\m = 1$ can be found in Lemma 2.3 of~\cite{Cances_undated-fd}, whereas~\eqref{eq:eqH1} is proved in~\cite[Lemma 3.1]{GD} in a similar setting. For the sake of completeness, we present the proof of~\eqref{eq:eqH1} in Appendix~\ref{app:proof_lemma} in our specific setting. Using the (very) crude bound $\norm{\gam - \gamh}_{\HS}^2 \le 4 \J$, \cf~\eqref{eq:Trgam2}, we immediately deduce~\eqref{eq:eqH1_bonus} from~\eqref{eq:eqH1}.
Using Lemma~\ref{lem:norm_eq2}, it is possible to easily translate bounds expressed in terms of density matrices on bounds on the eigenvectors, as long as these eigenvectors are rotated correctly. In the rest of this paper, we will therefore focus on the estimation of density-matrix-based quantities only.

In terms of implementation, the natural outputs of an eigenvalue solver are often eigenvectors and not density matrices (note, however, that some algorithms directly compute density matrices using Cauchy's formula $\gam=\frac{1}{2i\pi}\oint_{\mathcal{C}}(z-A)^{-1} \, dz$, where ${\mathcal{C}}$ is a contour in the complex plane enclosing the eigenvalues $(\la{m},\ldots,\la{M})$). The practical computation of $\| A^{1/2}(\gamh-\gam)\|_{\HS}$ can easily be done in terms of the eigenvectors since, using~\eqref{eq:xxx} and~\eqref{def:gamh},
\[
   \norm{A^{1/2}(\gam-\gamh)}_{\HS}^2
   =
   \sum_{i=\m}^\M \left[ \| A^{1/2} \ue{i}\|_\sHH^2 - 2 \sum_{j=\m}^\M (\A^{1/2} \ue{i},\A^{1/2} \ujh)(\ujh,\ue{i}) + \| A^{1/2} \uih\|_\sHH^2 \right].
\]

To conclude this section, Table~\ref{tab:tab} presents a summary of the principal mathematical objects dealt with in the analysis, in the general case of a given self-adjoint operator $A$, as well as for the Laplace operator $-\Delta$ with Dirichlet boundary conditions and a Schr\"odinger operator $-\Delta+V$ on a cubic box with periodic boundary conditions, for which we present numerical simulations below in Section~\ref{sec_num}.

\begin{table}[!ht]
   \centering
   \caption{Mathematical objects used in this analysis. For the Laplace operator case, $\Omega$ is a bounded Lipschitz domain of $\R^d$, $d \geq 1$. For the periodic Schr\"odinger operator case, $\Omega=[0,L)^d$, $d \geq 1$,  $L^{2}_\#(\Omega)\eq\{v \in L^{2}_{\rm loc}(\R^d) |\, v \, L\Z^d\text{-periodic}\}$, $H^s_\#(\Omega)=\{v \in H^s_{\rm loc}(\R^d)|\,  v \, L\Z^d\text{-periodic}\}$, $s=1,2$, and $V \in L^\infty_{\#}(\Omega)$, $V \ge 1$, $\gamma \in \LH$ is a rank-$J$ orthogonal projector such that $\mbox{Ran}(\gamma)=
\Span \{ \varphi^\J_{\m},  \ldots,  \varphi^\J_{\M}\}$, where $\varphi^\J_{\m},  \ldots,  \varphi^\J_{\M} \in D(A^{1/2})$ are orthonormal in $\HH$.} \vspace{3mm}

   \label{tab:tab}
   \begin{tabular}{|c|c|c|c|}
      \cline{2-4}
      \multicolumn{1}{c|}{} & General framework & Laplace operator & Schr\"odinger operator \\
      \hline
      Hilbert space & $\HH$ & $L^2(\Omega)$ & $L^2_\#(\Omega)$ \\ \hline
      Operator  & $A$ & $-\Delta$ & $-\Delta +V$ \\ \hline
      \multirow{2}{*}{Domain} & \multirow{2}{*}{$D(A)$} & $\{ v \in H^1_0(\Omega)| \, $ & \multirow{2}{*}{$H^2_\#(\Omega)$} \\ 
       & & $\Delta v \in L^2(\Omega)\}$ &    \\ \hline
      Form domain       &  $D(\A^{1/2})$ & $H^1_0(\Omega)$ & $H^1_\#(\Omega)$ \\ \hline
      Norm of $v$ & $\|v\|_\sHH$ & $\left(\int_\Omega |v|^2\right)^{1/2}$ & $\left(\int_\Omega |v|^2\right)^{1/2}$ \\ \hline
      Energy norm  & \multirow{2}{*}{$\|A^{1/2}v\|_\sHH$} & \multirow{2}{*}{$\left(\int_\Omega |\Gr v|^2\right)^{1/2}$} & \multirow{2}{*}{$\left(\int_\Omega (|\Gr v|^2 + V |v|^2)\right)^{1/2}$} \\ 
      of $v$ & & & \\ \hline
      \begin{tabular}{@{}c@{}}Energy norm \\ of $\gamma$ \end{tabular}
        & \begin{tabular}{@{}c@{}} $\|A^{1/2}\gamma\|_\HS$ \\
        $=\big\{\sum_{i=\m}^\M \|A^{1/2} \varphi^\J_{i}\|^2\big\}^{1/2}$
      \end{tabular}
         & \begin{tabular}{@{}c@{}} $\big\{\sum_{i=\m}^\M \int_\Omega |\Gr \varphi^\J_i|^2\}^{1/2} $ \\
      \end{tabular}
         & \begin{tabular}{@{}c@{}} $\big\{\sum_{i=\m}^\M \int_\Omega (|\Gr \varphi^\J_i|^2 $ \\ $+ V |\varphi^\J_i|^2)\big\}^{1/2}$ \\ \end{tabular}
          \\ \hline
   \end{tabular}
\end{table}

\subsection{Residuals and their dual norms}\label{sec_res}

Classically, the derivation of {\em a posteriori} error estimates is based on the notion of the residual and its dual norm.
In our setting, we can define the residual for a single eigenpair as follows, where
 $D(A^{1/2})'$ stands for the dual of $D(A^{1/2})$.

\bd[Single eigenpair residual and its dual norm] For any eigenpair $(\uh{i},\lah{i}) \in V_h \times
\real_{+}$ of~\eqref{eq:discretization}, $\m \leq i \leq \M$, define the {\em residual} $\Res(\uih,\lih)\in D(A^{1/2})'$ by
\bse\begin{equation} \label{eq_res}
	\VpV{\Res(\uh{i},\lah{i})}{v} \eq \lah{i} (\uh{i}, v)_\sHH - \big(A^{1/2} \uh{i}, A^{1/2} v\big)_\sHH \qquad \forall v \in D(A^{1/2}).
\end{equation}
Its {\em dual norm} is then
\begin{equation}
  \label{eq_res_norm}
      \norm{\Res(\uh{i},\lah{i})}_{D(A^{1/2})'} \eq \sup_{\substack{v \in D(A^{1/2})\\ \norm{A^{1/2} v}_\sHH = 1}} \VpV{\Res(\uh{i},\lah{i})}{v}.
\end{equation}
\ese
\ed
To consider the error on the eigenvalue cluster in its globality,
we now define a cluster residual, which is an operator measuring the error with respect to the equation for the whole targeted eigenspace. Note that this operator depends on the approximate density matrix $\gamh$ only, and not on the exact density matrix $\gam$, exactly as the single eigenpair residuals depend on the approximate eigenpairs only.

\bd[Cluster residual] For $\gamh$ defined in \eqref{def:gamh}, define the {\em cluster residual} $\Resc \in \LH$ by
\begin{equation}
    \label{def:globalresidual}
    \Resc \eq A^{1/2} \gamh - A^{-1/2} \big(A^{1/2} \gamh\big)^\dag A^{1/2} \gamh.
\end{equation}

\ed

Note that $\Resc$ is a finite-rank operator of $\LH$, as $\gamh$ is finite-rank, $A^{1/2} \gamh$ is bounded by Lemma~\ref{lem_LH}, and $A^{-1/2} \in \LH$. 
The choice of this definition is motivated by the following remark.

\br[Strong form of the cluster residual] \label{rem_res_strong} When the approximation space in~\eqref{eq:discretization} satisfies $V_h\subset D(\A)$, which is the case for planewave discretizations of periodic Schr\"odinger operators, but not for Lagrange finite element discretizations of the Laplace operator, one could first define a cluster residual in $\LH$ by
\[
    \Resg \eq (1-\gamh) A \gamh.
\]
The corresponding operator for the exact density matrix $(1-\gam)A\gam$ is indeed zero, as ${\rm Ran}(A\gam) = {\rm Ran}(\gam) \subset {\rm Ker (1-\gam)}$. 

Then $A^{-1/2} \Resg = \Resc$ and
\[
    \|\Resc\|_{\HS}^2 = \|A^{-1/2} \Resg\|_{\HS}^2 =  \Tr\big(\Resg^\dag \A^{-1} \Resg\big).
\]
In the case where $V_h\subsetneq D(\A)$, the analog of $\Resg$ is $ (A^{1/2}(1-\gamh))^\dag A^{1/2} \gamh = (A^{1/2})^\dag A^{1/2} \gamh - (A^{1/2}\gamh)^\dag A^{1/2} \gamh$, which multiplied on the left by $A^{-1/2}$ is well-defined.

\er

We now show that the definitions of the single eigenpair and cluster residuals match in the sense that the sum of the dual norms of the single eigenpair residuals is equal to the Hilbert--Schmidt norm of the cluster residual. Therefore, we will be able to estimate the individual dual norms~\eqref{eq_res_norm} by existing tools in Section~\ref{sec:apost_estimates} below.

The following preliminary lemma relates the residual to the exact and approximate eigenpairs.
\bl[Residual expansion]
\label{lem:res_expansion}
   There holds
   \begin{equation}
      \forall s \leq 0, \quad
      \| \A^{s} \Resc \|_{\HS}^2
      =
      \sum_{k\ge 1} \sum_{i=\m}^\M
      \la{k}^{2s-1}(\la{k}-\lah{i})^2
      \left|\left(\uh{i}, \ue{k}\right)_\sHH  \right|^2.
      \label{eq:res_leftpart}
   \end{equation}
\el

\begin{proof} First note that $\A^{s} \Resc \in \LH$ for $s \leq 0$.
Using \eqref{def:gamh} and since $\A^{s}$ is self-adjoint, \eqref{eq_Tr_2_norm} yields
\ban
    \| \A^{s} \Resc \|_{\HS}^2 = {} & \sum_{i=\m}^{\M} \| \A^{s} \Resc \uih \|^2 = \sum_{i=\m}^{\M} \big(\Resc \uih, \A^{2s} \Resc \uih\big)_\sHH \\
    = & {} \sum_{i=\m}^{\M} \left[ \big(A^{1/2} \uih, \A^{2s} A^{1/2} \uih\big)_\sHH - 2 \big(A^{1/2} \uih, \A^{2s} A^{-1/2} \big(A^{1/2} \gamh\big)^\dag A^{1/2} \uih\big)_\sHH \right.\\
    {} & \left.+ \big(A^{-1/2} \big(A^{1/2} \gamh\big)^\dag A^{1/2} \uih, \A^{2s} A^{-1/2} \big(A^{1/2} \gamh\big)^\dag A^{1/2} \uih\big)_\sHH \right]\\
    = & {} \sum_{i=\m}^{\M} \left[ \big(A^{1/2} \uih, \A^{2s} A^{1/2} \uih\big)_\sHH - 2 \big(A^{1/2} \gamh A^{2s} \uih, A^{1/2} \uih\big)_\sHH \right.\\
    {} & \left.+ \big(\big(A^{1/2} \gamh\big)^\dag A^{1/2} \uih, \A^{2s-1} \big(A^{1/2} \gamh\big)^\dag A^{1/2} \uih\big)_\sHH \right]\\
    \qe \!\!\! {} & \sum_{i=\m}^{\M} [T_{1i} + T_{2i} + T_{3i}].
\ean
We now treat the three terms separately while expanding the operators $\A^{2s}$, $\A^{1/2}$, and $\A^{2s-1}$ on the eigenvectors using~\eqref{eq_As_expr}. This gives, noting that $A^{1/2}$ is self-adjoint,
\ban
    T_{1i} & = \left(A^{1/2} \uih, \sum_{k \geq 1} \la{k}^{2s} \big(A^{1/2} \uih, \ue{k}\big)\ue{k}\right) = \sum_{k \geq 1} \la{k}^{2s} \big|\big(A^{1/2} \uih, \ue{k}\big)\big|^2
    = \sum_{k \geq 1} \la{k}^{2s + 1} \left|\left(\uih, \ue{k}\right)\right|^2
\ean
and similarly, also using~\eqref{eq:discretization},
\ban
    T_{2i} & = - 2 \left(A^{1/2} \gamh \sum_{k \geq 1} \la{k}^{2s} \left(\uih, \ue{k}\right)\ue{k}, A^{1/2} \uih\right)_\sHH
    \\ &
    = - 2 \sum_{k \geq 1} \la{k}^{2s} \left(\uih, \ue{k}\right)\big(A^{1/2} \gamh \ue{k}, A^{1/2} \uih\big)_\sHH \\
    & = - 2 \sum_{k \geq 1} \la{k}^{2s} \left(\uih, \ue{k}\right) \lih \left(\uih, \gamh \ue{k}\right)_\sHH = - 2 \sum_{k \geq 1} \la{k}^{2s} \lih \left|\left(\uih, \ue{k}\right)\right|^2.
\ean
Finally, relying again on~\eqref{eq:discretization},
\ban
    T_{3i} & = \left(\big(A^{1/2} \gamh\big)^\dag A^{1/2} \uih,
    \sum_{k \geq 1} \la{k}^{2s-1} \big(\big(A^{1/2} \gamh\big)^\dag A^{1/2} \uih, \ue{k}\big)\ue{k} \right)_\sHH \\
    & = \sum_{k \geq 1} \la{k}^{2s-1} \big|\big(A^{1/2} \uih, A^{1/2} \gamh \ue{k}\big) \big|^2 = \sum_{k \geq 1} \la{k}^{2s-1} \lih^2 \left|\left(\uih, \ue{k}\right)\right|^2.
\ean
Developing the square in~\eqref{eq:res_leftpart} finishes the proof.
\end{proof}

We can now state the correspondence between the cluster residual and the single eigenpair residuals.

\bl[Relation between cluster and single eigenpair residuals]
\label{Lem:res} There holds
\[
    \|\Resc\|_{\HS}^2 = \sum_{i=\m}^\M \norm{\Res(\uih,\lih)}_{D(A^{1/2})'}^2.
\]
\el

\bp
For each $\m \leq i \leq \M$, define the Riesz representation of the residual $\res \in D(A^{1/2})$ such that
\be \label{eq_Riesz}
    \big(A^{1/2} \res, A^{1/2} v \big) = \VpV{\Res(\uh{i},\lah{i})}{v} \qquad \forall v \in D(A^{1/2}).
\ee
Consequently,
\[
    \norm{A^{1/2} \res} = \norm{\Res(\uh{i},\lah{i})}_{D(A^{1/2})'}.
\]
Moreover, using that $A^{-1/2}$ is self-adjoint and $A^{-1/2}A^{1/2}=1$, we see from~\eqref{eq_res} that
\[
    \big(A^{1/2} \res, A^{1/2} v \big) = \lah{i} \big(A^{-1/2} \uh{i}, A^{1/2} v\big)_\sHH - \big(A^{1/2} \uh{i}, A^{1/2} v\big)_\sHH \qquad \forall v \in D(A^{1/2}),
\]
so that
\[
    A^{1/2} \res = \lah{i} A^{-1/2} \uh{i} - A^{1/2} \uh{i}.
\]
Expressing the norms related to $\A^{-1/2}$, $\A^0$, and $\A^{1/2}$ via~\eqref{eq_As_dom_norm}, we conclude therefrom that
\be \label{eq_Riesz_dev} \bs
    \norm{A^{1/2} \res}^2 & = \lah{i}^2 \norm{A^{-1/2}\uh{i}}^2 - 2 \lah{i} \big(\uh{i}, \uh{i}\big) + \norm {A^{1/2} \uh{i}}^2 \\
    & = \lah{i}^2 \sum_{k \geq 1} \la{k}^{-1} |(\uh{i},\ue{k})_\sHH|^2 - 2 \lah{i} \sum_{k \geq 1} |(\uh{i},\ue{k})_\sHH|^2 + \sum_{k \geq 1} \la{k} |(\uh{i},\ue{k})_\sHH|^2,
\es \ee
and the assertion follows using~\eqref{eq:res_leftpart} with $s=0$.
\ep

\section{Error equivalences} \label{sec_equiv}

The framework is now ready to prove {\em a posteriori} estimates for $\|A^{1/2}(\gam-\gamh)\|_{\HS}$ and the sum of the eigenvalues errors in terms of the cluster residual $\Resc$. These results extend~\cite[Theorems 3.4, 3.5 and  Lemmas 3.1, 3.2]{Canc_Dus_Mad_Stam_Voh_eigs_conf_17} to the case of eigenvalue clusters, and especially cover the case of degenerate eigenvalues.

\subsection{Eigenvalue error equivalence}

We first show how to estimate the sum of the eigenvalues errors in terms of errors on the density matrix.

\bt[Eigenvalue bounds]
\label{th:eigenvalue_bounds}
Let Assumption~\ref{as:gap} hold and let the density matrices $\gam$ and $\gamh$ be respectively defined by~\eqref{def:gamma0} and~\eqref{def:gamh}. Then
\begin{equation}
    \label{eq:estimlambda}
    \| \A^{1/2} (\gam - \gamh) \|_\HS^2 - \la{\M}\| \gam - \gamh \|_{\HS}^2
    \le
    \sum_{i=\m}^\M (\lah{i} - \la{i})
    \le
    \| \A^{1/2} (\gam - \gamh) \|_\HS^2.
\end{equation}
\et

\bp
We start from~\eqref{eq:Agamhgam}, \ie,
\[
   \norm{\A^{1/2}(\gam-\gamh)}_{\HS}^2 =
   \sum_{i=\m}^\M (\lah{i} - \la{i})
   + 2 \sum_{i=\m}^\M \la{i}
   \| (1-\gamh) \ue{i} \|_\sHH^2.
\]
Noting that $ 2\sum_{i=\m}^\M \la{i}
\| (1-\gamh) \ue{i} \|_\sHH^2 \ge 0$ easily proves the right-hand side of~\eqref{eq:estimlambda}.
Moreover, bounding the eigenvalues by the largest in the sum, expressing the sum of the projected eigenvectors as a trace, and using $(1-\gamh)^2=1-\gamh$ yields
\begin{align*}
    2\sum_{i=\m}^\M \la{i}
    \| (1-\gamh) \ue{i} \|_\sHH^2 & \leq 2\la{\M} \sum_{i=\m}^\M \|(1 - \gamh) \ue{i}\|_\sHH^2
   = 2 \la{\M} \Tr(\gam (1-\gamh))
   = \la{\M} \|\gam-\gamh\|_{\HS}^2,
\end{align*}
where we have used Lemma~\ref{lem:expan_gamerr} with $\gam$ and $\gamh$ for the last equality.
The left-hand side of~\eqref{eq:estimlambda} follows.
\ep

\subsection{Eigenvector error equivalence}

We next estimate the energy norm of the density matrix error in terms of the Hilbert--Schmidt norm of the cluster residual $\Resc$. We henceforth often need the following assumption, in addition to Assumption~\ref{as:gap}:

\begin{assumption}[Continuous--discrete gap conditions]
   \label{as:gap_cont_disc}
   There exist $ \ula{\M+1}$,  and $\ola{\m-1}$ if $\m>1$, 
   which we take as $\ola{\m-1} = \lah{(\m-1)}$,
   such that there holds
\[
    \la{\m-1} \le \ola{\m-1} < \lah{\m} \text{ when } \m>1,
    \qquad
    \lah{\M} < \ula{\M+1} \le \la{\M+1}.
\]
\end{assumption}

For practical use, we usually proceed as follows.

\br[Verification of Assumptions~\ref{as:gap} and~\ref{as:gap_cont_disc}, choice of $ \ula{\M+1}$,
and uniqueness of the discrete projector $\gamma_h$] \label{rem_pract} 
Let $\ula{\m}$ be a guaranteed lower bound for $\la{\m}$ and $\ula{\M+1}$ a guaranteed lower bound for $\la{\M+1}$ (obtained by, \eg in a finite element discretization, employing the nonconforming finite element method on a coarse mesh and using the technique presented \cite[Theorem~3.2]{Cars_Ged_LB_eigs_14} or~\cite[formula (6)]{Liu_fram_eigs_15}).
If these bounds are accurate enough, it follows from~\eqref{eq:conf_approx} that we can request
\begin{align}
&\lambda_{m-1} \le \lambda_{(m-1)h} <  \underline{\lambda}_m \le \lambda_m \le \lambda_{mh}, \quad \mbox{when } m > 1,   \label{eq:LBm} \\
&\lambda_{M} \le \lambda_{Mh} <  \underline{\lambda}_{M+1} \le \lambda_{M+1} \le \lambda_{(M+1)h},  \label{eq:LBM+1}
\end{align}
so that: 1) Assumption~\ref{as:gap} is satisfied; 2) Assumption~\ref{as:gap_cont_disc} is satisfied with $\ola{\m-1} = \lah{(\m-1)}$ and hence the constants $\chn$ and $\chtn$ in~\eqref{def:ch} and~\eqref{eq:ch_tilde} will be well-defined; 3) the discrete gap condition of Remark~\ref{as:gap_disc} is satisfied and hence the discrete projector $\gamma_h$ is uniquely defined.
\er

In view of Remark~\ref{rem_pract}, we introduce the following assumption, which implies in particular Assumptions~\ref{as:gap} and~\ref{as:gap_cont_disc} as well as Remark~\ref{as:gap_disc}.

\begin{assumption}[Availability of accurate enough lower bounds for $\la{\m}$ and $\la{\M+1}$]
   \label{as:lowerbounds}
   We know  two real numbers $\ula{m}$ and $ \underline{\lambda}_{M+1}$ such that the inequalities \eqref{eq:LBm}--\eqref{eq:LBM+1} are satisfied.
\end{assumption}

\bt[Upper bounds for the density matrix error]
\label{th:DM_bounds} Let Assumption~\ref{as:gap} hold, let the density matrices $\gam$ and~$\gamh$ be respectively defined by~\eqref{def:gamma0} and~\eqref{def:gamh}, and let the cluster residual $\Resc$ be defined by~\eqref{def:globalresidual}. Then, there holds
\begin{equation}
    \label{eq:3.5.1}
    \| A^{1/2} (\gam - \gamh) \|_\HS^2
    \le
     \| \Resc \|_{\HS}^2 + (\la{\M}+\lah{M})\| \gam - \gamh \|_{\HS}^2.
\end{equation}
Let in addition Assumptions~\ref{as:nonorth} and~\ref{as:gap_cont_disc} hold and set
\begin{equation}
  \label{def:ch}
    \ch \eq \max \left[	\left(\frac{\lah{\m}}{\ola{\m-1}}-1\right)^{-1} , \left(1 - \frac{\lah{\M}}{\ula{\M+1}} \right)^{-1}
            \right],
\end{equation}
the first term in the {\rm max} being discarded for $\m=1$.
Then there also holds that
\begin{equation}
    \label{eq:3.5.1bis}
    \| A^{1/2} (\gam - \gamh) \|_\HS^2
    \le
     2 \ch^2\| \Resc \|_{\HS}^2 + \frac{\la{\M}}{2}\| \gam - \gamh \|_{\HS}^4.
\end{equation}

\et

\bp
To show~\eqref{eq:3.5.1}, let us decompose $\|A^{1/2}(\gam-\gamh)\|_{\HS}^2$ using~\eqref{eq_Tr_2_norm} as
\begin{align*}
	\|A^{1/2}(\gam-\gamh)\|_{\HS}^2 & = \Tr\big(\big(A^{1/2}(\gam-\gamh)\big)^\dag A^{1/2}(\gam-\gamh)\big)\\
  & =  \Tr\big(\big(A^{1/2}(\gam-\gamh)\big)^\dag A^{1/2}\gam\big)
  + \Tr\big(\big(A^{1/2}(\gamh-\gam)\big)^\dag A^{1/2}\gamh\big)\\
  & \qe T_1 + T_2.
\end{align*}
On the one hand, using definition~\eqref{def:gamma0} of $\gam$ and~\eqref{eq:eig_pb_weak},
\begin{align*}
	T_1 &= \sum_{i=\m}^\M ( A^{1/2} \ue{i},  A^{1/2} (\gam-\gamh) \ue{i})_\sHH
	=  \sum_{i=\m}^\M \la{i} ( \ue{i}, (\gam-\gamh) \ue{i})_\sHH.
\end{align*}
Since for $i=\m,\ldots,\M$, $( \ue{i}, (\gam-\gamh) \ue{i})_\sHH = \left(1  - \|\gamh \ue{i}\|_\sHH^2\right)\ge 0$, we can bound the above expression via Lemma~\ref{lem:expan_gamerr} as
\begin{align*}
	T_1 &\le \la{\M}  \sum_{i=\m}^\M  ( \ue{i}, (\gam-\gamh) \ue{i})_\sHH
   =   \la{\M} \Tr(\gam (1-\gamh))
	=  \frac{\la{\M}}{2} \|\gam-\gamh\|_{\HS}^2.
\end{align*}

On the other hand, writing $1 = \big(A^{1/2} \gamh  A^{-1/2}\big)^\dag + (1-\big(A^{1/2} \gamh  A^{-1/2}\big)^\dag)$, using the definition of the cluster residual~\eqref{def:globalresidual}, and employing~\eqref{eq:discretization}, we obtain
\ban
T_2  = {} & \Tr\big(\big(A^{1/2}(\gamh-\gam)\big)^\dag \big(A^{1/2} \gamh  A^{-1/2}\big)^\dag A^{1/2}\gamh\big) \\ & + \Tr\big(\big(A^{1/2}(\gamh-\gam)\big)^\dag (1-\big(A^{1/2} \gamh  A^{-1/2}\big)^\dag) A^{1/2}\gamh\big)\\
 = {} & \sum_{i=\m}^{\M} \big(A^{1/2}\gamh(\gamh-\gam)\uih, A^{1/2} \uih\big) + \Tr\big(\big(A^{1/2}(\gamh-\gam)\big)^\dag \Resc\big)\\
 = {} & \sum_{i=\m}^\M \lah{i}( \uh{i}, (\gamh -\gam) \uh{i})_\sHH + \Tr\big(\big(A^{1/2}(\gamh-\gam)\big)^\dag \Resc\big).
\ean
Using this time that, for all $i=\m,\ldots,\M$, $( \uh{i}, (\gamh-\gam) \uh{i})_\sHH = \left(1  - \|\gam \uh{i}\|_\sHH^2\right)\ge 0$, Lemma~\ref{lem:expan_gamerr}, and~\eqref{eq:point5}, Young's inequality leads to
\begin{align*}
	T_2
  & \le  \lah{\M} \Tr(\gamh(\gamh-\gam))
  + \norm{\A^{1/2}(\gam-\gamh)}_{\HS} \| \Resc \|_{\HS} \\
	&\le  \frac{\lah{\M}}{2} \|\gam-\gamh\|_{\HS}^2
   + \frac{1}{2} \norm{\A^{1/2}(\gam-\gamh)}_{\HS}^2
   + \frac{1}{2} \| \Resc \|_{\HS}^2.
\end{align*}
Putting these contributions together, we get
\begin{align*}
   \|A^{1/2}(\gam-\gamh)\|_{\HS}^2 \le {}  &   \frac{\la{\M}+\lah{\M}}{2}\|\gam-\gamh\|_{\HS}^2 
   \\ & + \frac{1}{2} \| \Resc \|_{\HS}^2 + \frac{1}{2} \norm{\A^{1/2}(\gam-\gamh)}_{\HS}^2,
\end{align*}
from which we deduce~\eqref{eq:3.5.1}.

To show~\eqref{eq:3.5.1bis}, we start from~\eqref{eq:res_leftpart} with $s=0$ which reads
\[
   \| \Resc \|_{\HS}^2
   =  \sum_{k\ge 1} \sum_{i=\m}^\M \la{k}
   \left(1- \frac{\lah{i}}{\la{k}}\right)^2
   \left|\left(\uh{i}, \ue{k}\right)_\sHH  \right|^2.
\]
Remark now that for $k\ge 1$, $k \notin \{\m, \ldots, \M \}$,
$(1- \frac{\lah{i}}{\la{k}})^2 \geq \ch^{-2}$ under Assumption~\ref{as:gap_cont_disc}, similarly as in \cite[proof of Lemma 3.1]{Canc_Dus_Mad_Stam_Voh_eigs_conf_17}. Thus, dropping some non-negative terms and introducing the density matrix $\gamh$, we obtain
\begin{align*}
    \| \Resc \|_{\HS}^2
    \ge {} & \sum_{\substack{k\ge 1 \\ k\notin \{\m, \ldots, \M \}}} \sum_{i=\m}^\M  \la{k} \left(1- \frac{\lah{i}}{\la{k}}\right)^2
    \left|\left(\uh{i}, \ue{k}\right)_\sHH  \right|^2 \\
    \ge {} & \ch^{-2}\sum_{\substack{k\ge 1 \\ k\notin \{\m, \ldots, \M \}}} \sum_{i=\m}^\M  \la{k}
    \left|\left(\uh{i}, \ue{k}\right)_\sHH  \right|^2 \\
    = {} & \ch^{-2}\sum_{\substack{k\ge 1 \\ k\notin \{\m, \ldots, \M \}}}  \la{k}
    \left( \ue{k} , \gamh \ue{k} \right)_\sHH.
\end{align*}
Now, introducing the rotated discrete eigenvectors $(\uho{i})_{i=1,\ldots,\M}$ defined by~\eqref{eq:unitary_min} through the expression of the density matrix~\eqref{eq:rotated_DM}, using the orthonormality of the eigenvectors $(\ue{k})$, and definition~\eqref{eq_As_dom_norm} of $\norm{\A^{1/2} v}_\sHH$, there holds
\begin{align*}
  \| \Resc \|_{\HS}^2
  \ge {} & \ch^{-2}\sum_{\substack{k\ge 1 \\ k\notin \{\m, \ldots, \M \}}} \sum_{i=\m}^\M  \la{k}
  \left|\left(\uho{i}-\ue{i}, \ue{k}\right)_\sHH  \right|^2 \\
  = {} & \ch^{-2} \left(\sum_{k\ge 1} \sum_{i=\m}^\M  \la{k}
  \left|\left(\uho{i}-\ue{i}, \ue{k}\right)_\sHH  \right|^2 - \sum_{k=\m}^\M \sum_{i=\m}^\M  \la{k}
  \left|\left(\uho{i}-\ue{i}, \ue{k}\right)_\sHH  \right|^2 \right) \\
  \ge {} & \ch^{-2} \left( \sum_{i=\m}^\M
  \|A^{1/2}(\uho{i}-\ue{i})\|_\sHH^2  - \la{\M}\sum_{k=\m}^\M \sum_{i=\m}^\M
  \left|\left(\ue{k}, \uho{i}-\ue{i}\right)_\sHH  \right|^2 \right).
\end{align*}

Since the eigenvectors composing $\Phiho$ are orthonormal and using Assumption~\ref{as:nonorth}, the $\J\times\J$  overlap matrix $\Over$ with entries $\left(\Over\right)_{i,k} = (\uho{i},\ue{k})_\sHH$ is symmetric (see~\cite[Lemma 4.3]{Cances2012-bo}).
Hence using once again that the eigenvectors are orthonormal, we obtain that for any $i,k=\m,\ldots,\M,$ $i\neq k$,
\[
   \left(\ue{k},\uho{i}-\ue{i}\right)_\sHH
   = \left(\ue{k},\uho{i}\right)_\sHH = \left(\ue{i},\uho{k}\right)_\sHH
   = \frac{1}{2} \left(
   \left(\uho{i},\ue{k}\right)_\sHH
   +\left(\uho{k},\ue{i}\right)_\sHH\right)
   = \frac{1}{2} \left(\uho{k}-\ue{k},\ue{i}-\uho{i}\right)_\sHH.
\]
Since for $i=\m,\ldots,\M,$ $\left(\ue{i},\uho{i}-\ue{i}\right)_\sHH = -\frac{1}{2} \|\ue{i}-\uho{i}\|^2_\sHH $, we obtain that for any $i,k=\m,\ldots,\M,$
\begin{equation}
   \label{eq:estimL2}
   \left(\ue{k},\uho{i}-\ue{i}\right)_\sHH = \frac{1}{2} \left(\uho{k}-\ue{k},\ue{i}-\uho{i}\right)_\sHH.
\end{equation}
From~\eqref{eq:estimL2}, definition~\eqref{eq:vecs}, and the Cauchy--Schwarz inequality, we deduce
\begin{align*}
  \| \Resc \|_{\HS}^2
  \ge {} & \ch^{-2} \left(
  \norm{\A^{1/2}(\Phie-\Phiho)}_\sHH^2  - \frac{\la{\M}}{4} \sum_{k=\m}^\M \sum_{i=\m}^\M
  \left|\left( \ue{k} - \uho{k} , \ue{i}-\uho{i}\right)_\sHH  \right|^2 \right) \\
  \ge {} & \ch^{-2} \left(
  \norm{\A^{1/2}(\Phie-\Phiho)}_\sHH^2  - \frac{\la{\M}}{4}
   \left[\sum_{i=\m}^\M
  \norm{\ue{i} - \uho{i}}^2_\sHH\right]^2
  \right) \\
  = {} & \ch^{-2} \left(
  \norm{\A^{1/2}(\Phie-\Phiho)}_\sHH^2  - \frac{\la{\M}}{4}
  \norm{\Phie-\Phiho}^4_\sHH
  \right).  \\
\end{align*}
Finally, using~\eqref{eq:eqL2} and~\eqref{eq:eqH1} finishes the proof of
\eqref{eq:3.5.1bis}.
\ep

\bt[Lower bound for the density matrix error]
\label{th:lower_bound_DM}
Let Assumption~\ref{as:gap} and~\ref{as:nonorth} hold, let the density matrices $\gam$ and~$\gamh$ be respectively defined by~\eqref{def:gamma0} and~\eqref{def:gamh}, and let the cluster residual $\Resc$ be defined by~\eqref{def:globalresidual}. Set
\begin{equation} \label{eq_chb}
  \chb \eq \max\left\{ \left(\frac{\lah{\M}}{\la{1}} -1  \right)^2,1\right\}.
\end{equation}
Then, there holds
\be \label{eq:3.5.2} \bs
  {} & \| \Resc \|_{\HS}^2 \\
  \le {} & \chb
  \| A^{1/2} (\gam - \gamh)\|^2_\HS +  \frac{3 (\la{\M} -\la{\m})^2}{4\la{\m}} \|\gam - \gamh\|_\HS^4 + \frac{3}{\la{\m}} \left( 1 + \frac{1}{4}\|\gam - \gamh\|_\HS^4 \right)\times \\
  {} &
  \left[2   \left( 1+ \frac{\la{\M}}{4\la{\m}}\|\gam - \gamh\|_{\HS}^2 \right)^2
  \norm{\A^{1/2}(\gam-\gamh)}_{\HS}^4
  + 2{(\la{\M})^2}
  \|\gam - \gamh\|_{\HS}^4\right].
\es \ee
\et

\bp First, let us define the Lagrange multiplier matrix of the orthonormality constraints for $\Phiho$ defined in~\eqref{eq:unitary_min} by
\begin{equation}
\label{eq:lambdad}
	\Lh{} = (\Lh{ij})_{\m\le i,j \le \M}
    \eq( (\A^{1/2}\uho{i}, \A^{1/2}\uho{j})_\sHH )_{\m\le i,j \le \M} \in \R^{\J \times \J}.
\end{equation}
 Note that the matrix $\Lh{}$ is not diagonal in general. However, the matrix of the Lagrange multipliers of the orthonormality constraints for $\Phie$ is diagonal, from~\eqref{eq:eig_pb_weak}. It is denoted by
 \begin{equation}
   \label{eq:lambdae_lagrange}
   \La{}
   \eq(\delta_{ij} \la{i})_{\m\le i,j \le \M} \in \R^{\J \times \J}.
 \end{equation}
Using $1 = \gam + (1-\gam)$ and $(1-\gam)\gam = 0$, there holds
\be \label{eq_dec}
   \| \Resc \|_{\HS}^2 =
   \|  (1-\gam) \Resc \|_{\HS}^2 +  \| \gam \Resc  \|_{\HS}^2.
\ee

To estimate the first term in~\eqref{eq_dec}, we note that the development performed in Lemma \ref{lem:res_expansion} can be done similarly in this case, leading to
\begin{align*}
   \|  (1-\gam) \Resc \|_{\HS}^2 {} = & \sum_{\substack{k\ge 1 \\ k\notin \{\m,\ldots,\M\}}} \sum_{i=\m}^\M
   \la{k}\left( 1-\frac{\lah{i}}{\la{k}}  \right)^2
\left|\left(\uh{i}, \ue{k}\right)_\sHH  \right|^2.
\end{align*}
Bounding the eigenvalue term by $\chb$, using the self-adjointness of $A^{1/2}$, and employing the expansion~\eqref{eq_As_expr}, the Parseval equality~\eqref{eq_Pars}, the Hilbert--Schmidt norm definition~\eqref{eq_Tr_2_norm}, and the definitions of the projectors~\eqref{def:gamma0} and~\eqref{def:gamh}, we obtain
\begin{align*}
   \|  (1-\gam) \Resc \|_{\HS}^2 \le {} & \max_{\substack{i \in \{\m,\ldots,\M\} \\ k\notin \{\m,\ldots,\M\}}}
\left( 1-\frac{\lah{i}}{\la{k}}  \right)^2
\sum_{\substack{k\ge 1 \\ k\notin \{\m,\ldots,\M\}}}  \sum_{i=\m}^\M \la{k}
\left|\left(  \uh{i} , \ue{k} \right)_\sHH \right|^2 \\
\le  {} & \chb
 \sum_{\substack{k\ge 1 \\ k\notin \{\m,\ldots,\M\}}}  \sum_{i=\m}^\M
\left|\left(  \uh{i} , A^{1/2} \ue{k} \right)_\sHH \right|^2
\\
=  {} & \chb
\|  (1-\gam) A^{1/2}\gamh \|^2_\HS \\
\leq {} & \chb \| A^{1/2} (\gam - \gamh)\|^2_\HS,
\end{align*}
where the last estimate follows by a Pythagorean equality as~\eqref{eq_dec} and the fact that $(1-\gam) A^{1/2}\gam = 0$.

To deal with the second term in~\eqref{eq_dec}, first note that from the definition of the residual~\eqref{def:globalresidual}
\begin{align*}
   \| \gam \Resc  \|_{\HS}^2 = \left\|\gam\left(A^{1/2} \gamh - A^{-1/2} \big(A^{1/2} \gamh\big)^\dag A^{1/2} \gamh\right)\right\|_\HS^2.
\end{align*}
Expanding $\gamh$ on the rotated eigenvector basis $(\uho{\m},\ldots,\uho{\M})$ defined in~\eqref{eq:unitary_min} and using that $\gam$ is self-adjoint leads to, as in Lemma \ref{lem:res_expansion},
\begin{align*}
   \| \gam \Resc  \|_{\HS}^2 = & {} \sum_{i=\m}^{\M} \left[ \big(A^{1/2} \uho{i}, \gam A^{1/2} \uho{i}\big)_\sHH - 2 \big( A^{1/2} \uho{i}, \gam A^{-1/2} \big(A^{1/2} \gamh\big)^\dag A^{1/2} \uho{i}\big)_\sHH \right.\\
   {} & \left.+ \big(\big(A^{1/2} \gamh\big)^\dag A^{1/2} \uho{i}, \A^{-1/2}\gam\A^{-1/2} \big(A^{1/2} \gamh\big)^\dag A^{1/2} \uho{i}\big)_\sHH \right]\\
   =: {} & \!\!  \sum_{i=\m}^\M [ T_{1i} +T_{2i} + T_{3i} ].
\end{align*}
First, expanding $\gam$ and using the self-adjointness of $A^{1/2}$ leads to
\begin{align*}
   T_{1i} {} & = \sum_{k=\m}^\M \left| (\ue{k}, A^{1/2}\uho{i}) \right|^2  = \sum_{k=\m}^\M \la{k} \left| (\ue{k}, \uho{i}) \right|^2.
\end{align*}
Second, we expand $A^{-1/2}$ on the eigenvectors $\ue{k}$ following~\eqref{eq_As_expr} and we use the self-adjointness of $A^{1/2}$ and the definition of the Lagrange multipliers \eqref{eq:lambdad}
to obtain
\begin{align*}
   T_{2i}  & =  - 2 \sum_{k=\m}^\M \frac{1}{\sqrt{\la{k}}}\big( A^{1/2} \uho{i}, \ue{k}\big)  \big( \ue{k}, \big(A^{1/2} \gamh\big)^\dag A^{1/2} \uho{i}\big)_\sHH \\
   & =  - 2 \sum_{k=\m}^\M \big( \uho{i}, \ue{k}\big)  \big( \big(A^{1/2} \gamh\big) \ue{k}, A^{1/2} \uho{i}\big)_\sHH \\
   & = - 2 \sum_{k=\m}^\M \sum_{j=\m}^\M \big( \uho{i}, \ue{k}\big)
   \big( A^{1/2} \uho{j},  A^{1/2} \uho{i}\big)_\sHH
   \big(  \uho{j}, \ue{k}\big) \\
   & = - 2 \sum_{k=\m}^\M \sum_{j=\m}^\M \Lh{ij} \big( \uho{i}, \ue{k}\big)
   \big(  \uho{j}, \ue{k}\big).
\end{align*}
Third, using the definition of $\gam$ and expanding $A^{-1/2}$ two-times on the eigenvectors $\ue{k}$ as well as $\gamh$ on the rotated eigenvector basis $(\uho{\m},\ldots,\uho{\M})$ leads to
\begin{align*}
   T_{3i} & = \sum_{k=\m}^\M \frac{1}{\la{k}}
   \big(\big(A^{1/2} \gamh\big)^\dag A^{1/2} \uho{i}, \ue{k} \big) \big( \ue{k}, \big(A^{1/2} \gamh\big)^\dag A^{1/2} \uho{i}\big)_\sHH \\
   & = \sum_{k=\m}^\M \frac{1}{\la{k}}
   \big( A^{1/2} \uho{i}, A^{1/2} \gamh \ue{k} \big) \big(A^{1/2} \gamh \ue{k}, A^{1/2} \uho{i}\big)_\sHH \\
   & = \sum_{k=\m}^\M \frac{1}{\la{k}} \sum_{j=\m}^\M \sum_{p=\m}^\M
   \big( A^{1/2} \uho{i}, A^{1/2} \uho{j}\big) \big( \uho{j}, \ue{k} \big) \big(A^{1/2} \uho{p}, A^{1/2} \uho{i}\big)
   \big( \uho{p}, \ue{k} \big) \\
   & = \sum_{k=\m}^\M \frac{1}{\la{k}} \sum_{j=\m}^\M \sum_{p=\m}^\M
   \Lh{ij} \Lh{ip} \big( \uho{j}, \ue{k} \big)
   \big( \uho{p}, \ue{k} \big).
\end{align*}
Putting $T_{1i}, T_{2i}, T_{3i}$ together, we can write
\begin{align*}
   \| \gam \Resc  \|_{\HS}^2
   = {} & \sum_{k=\m}^\M \sum_{i=\m}^\M  \frac{1}{\la{k}}
   \left( \la{k} (\uho{i},\ue{k}) - \sum_{j=\m}^\M
   \Lh{ij}(\uho{j},\ue{k})
   \right)^2
   \\
   = {} & \sum_{k=\m}^\M \sum_{i=\m}^\M  \frac{1}{\la{k}}
   \Bigg( (\delta_{ik}\la{k} - \Lh{ik})  (\uho{k},\ue{k})
   + (1 - \delta_{ik})(\la{k}- \La{ii}) (\uho{i},\ue{k})
   \\
   {} & - \sum_{\substack{j=\m \\ j\neq k}}^\M
   (\Lh{ij} - \delta_{ij}\La{ij})(\uho{j},\ue{k})
   \Bigg)^2.
\end{align*}

Using definition~\eqref{eq:lambdae_lagrange}, the inequality $(a+b+c)^2 \le 3(a^2+b^2+c^2)$, and using that the eigenvectors are orthonormal, we obtain
\begin{align*}
   \| \gam \Resc  \|_{\HS}^2
   = {} & \sum_{k=\m}^\M \sum_{i=\m}^\M  \frac{1}{\la{k}}
   \Bigg( (\La{ik} - \Lh{ik})  (\uho{k},\ue{k})
   + (1 - \delta_{ik})(\La{kk}- \La{ii}) (\uho{i},\ue{k})
   \\
   {} & - \sum_{\substack{j=\m \\ j\neq k}}^\M
   (\Lh{ij} - \La{ij})(\uho{j},\ue{k})
   \Bigg)^2
   \\
   \le {} & 3 \sum_{k=\m}^\M \sum_{i=\m}^\M
   \frac{1}{\la{k}} (\La{ik} - \Lh{ik})^2  (\uho{k},\ue{k})^2 \\
   {} & + 3 \sum_{k=\m}^\M \sum_{i=\m}^\M
   \frac{1}{\la{k}}
   (1 - \delta_{ik})((\La{kk}- \La{ii}))^2 (\uho{i}-\ue{i},\ue{k})^2
   \\
   {} & + 3 \sum_{k=\m}^\M \sum_{i=\m}^\M
   \frac{1}{\la{k}}
    \left( \sum_{\substack{j=\m \\ j\neq k}}^\M
   (\Lh{ij} - \La{ij})(\uho{j}-\ue{j},\ue{k}) \right)^2.
\end{align*}
Noting that $|(\uho{k},\ue{k})| \le 1$ for $k=\m,\ldots,\M$, using~\eqref{eq:estimL2}, the Cauchy--Schwarz inequality, and~\eqref{eq:eqL2}, we get
\begin{align*}
  \| \gam \Resc  \|_{\HS}^2
   \le {} & \frac{3}{\la{\m}} \| \La{} - \Lh{}\|_{\mathrm{F}}^2
   +  \frac{3 (\la{\M}-\la{\m})^2}{4\la{\m}} \sum_{k=\m}^\M \sum_{i=\m}^\M
   (\uho{i} - \ue{i},\ue{k}-\uho{k})^2
   \\
   {} & + \frac{3}{4\la{\m}} \sum_{k=\m}^\M \sum_{i=\m}^\M
    \left( \sum_{\substack{j=\m \\ j\neq k}}^\M
   (\Lh{ij} - \La{ij})(\uho{j}-\ue{j},\ue{k}-\uho{k}) \right)^2
   \\
   \le {} & \frac{3}{\la{\m}} \| \La{} - \Lh{}\|_{\mathrm{F}}^2
   +  \frac{3 (\la{\M}-\la{\m})^2}{4\la{\m}} \|\Phiho - \Phie\|_\sHH^4
   + \frac{3}{4\la{\m}} \| \La{} - \Lh{}\|_{\mathrm{F}}^2  \|\Phiho - \Phie\|_\sHH^4
   \\
   \le {} & \frac{3}{\la{\m}} \| \La{} - \Lh{}\|_{\mathrm{F}}^2 \left( 1 + \frac{1}{4}\|\gam - \gamh\|_\HS^4 \right)
 +  \frac{3 (\la{\M}-\la{\m})^2}{4\la{\m}} \|\gam - \gamh\|_\HS^4,
\end{align*}
where $\| \cdot \|_{\mathrm{F}}$ is the matrix Frobenius (or Hilbert--Schmidt) norm.
Combining the estimates for the two summands in~\eqref{eq_dec}, the dual norm of the residual can be bounded by
\begin{align}
   \| \Resc \|_{\HS}^2
   \le {} & \chb
   \| A^{1/2} (\gam - \gamh)\|^2_\HS +  \frac{3 (\la{\M}-\la{\m})^2}{4\la{\m}} \|\gam - \gamh\|_\HS^4
   \nonumber\\
   {} & +     \frac{3}{\la{\m}} \| \La{} - \Lh{}\|_{\mathrm{F}}^2 \left( 1 + \frac{1}{4}\|\gam - \gamh\|_\HS^4 \right). \label{eq:temp_estimate}
\end{align}

We are left with estimating the Lagrange multipliers error in the Frobenius norm. For $\m \le i,j \le \M,$ and using~\eqref{eq:estimL2}, there holds
\begin{align*}
   \Lh{ij} - \La{ij}
    = {} & (  \A^{1/2} \uho{i},  \A^{1/2} \uho{j} )_\sHH
    - ( \A^{1/2} \ue{i}, \A^{1/2} \ue{j} )_\sHH \\
     = {} &  (  \A^{1/2}(\uho{i}-\ue{i}),  \A^{1/2} (\uho{j}-\ue{j}) )_\sHH
    \\ & + (  \A^{1/2}\ue{i}, \A^{1/2}  (\uho{j} - \ue{j}) )_\sHH
    + (  \A^{1/2}(\uho{i} - \ue{i}),  \A^{1/2} \ue{j} )_\sHH \\
    ={} & (  \A^{1/2}(\uho{i}-\ue{i}),  \A^{1/2} (\uho{j}-\ue{j}) )_\sHH
     \\ & + \la{i} ( \ue{i},  \uho{j} - \ue{j} )_\sHH
     +  \la{j} (\ue{j}, \uho{i} - \ue{i})_\sHH \\
     ={} & (  \A^{1/2}(\uho{i}-\ue{i}),  \A^{1/2} (\uho{j}-\ue{j}) )_\sHH
     \\ & 
     + \frac{\la{i}}{2}( \ue{i}-\uho{i},  \uho{j} - \ue{j} )_\sHH
     +  \frac{\la{j}}{2} (\ue{j}-\uho{j}, \uho{i} - \ue{i})_\sHH.
\end{align*}
Using the Cauchy--Schwarz inequality,
\begin{align*}
   |\Lh{ij} - \La{ij}|  \le & 
   \| \A^{1/2} (\uho{j} - \ue{j}) \|_\sHH \| \A^{1/2} (\uho{i} - \ue{i}) \|_\sHH \\ &
   +\frac{\la{i}+\la{j}}{2} \| \uho{j} - \ue{j} \|_\sHH  \| \uho{i} - \ue{i} \|_\sHH,
\end{align*}
from which we deduce that
\begin{align*}
   |\Lh{ij} - \La{ij}|^2 \le {} &
   2 \| \A^{1/2} (\uho{j} - \ue{j}) \|_\sHH^2 \| \A^{1/2} (\uho{i} - \ue{i}) \|_\sHH^2 \\ &
   + 2\left(\frac{ \la{i}+\la{j}}{2}\right)^2  \| \uho{j} - \ue{j} \|_\sHH^2  \| \uho{i} - \ue{i} \|_\sHH^2.
\end{align*}
Finally, using~\eqref{eq:eqH1} and~\eqref{eq:eqL2}, the estimate for the Frobenius norm goes as
\begin{align}
   \|\Lh{} - \La{}\|_{\mathrm{F}}^2 = {} & \sum_{i,j = \m}^\M |\Lh{ij} - \La{ij}|^2
   \nonumber \\
   \le {} & 2 \left(  \sum_{i=\m}^\M  \| \A^{1/2} (\uho{i} - \ue{i}) \|_\sHH^2  \right)^2
   + 2 {(\la{\M})^2}
   \left(\sum_{i=m}^M \|\uho{i} - \ue{i}\|_\sHH^2 \right)^2
   \nonumber \\
   = {} & 2 \| \A^{1/2} (\Phiho - \Phie) \|_\sHH^4
   + 2(\la{\M})^2
   \|\Phiho - \Phie\|_\sHH^4 \nonumber \\
   \le {} & 2   \left( 1+ \frac{\la{\M}}{4\la{\m}}\|\gam - \gamh\|_{\HS}^2 \right)^2
\norm{\A^{1/2}(\gam-\gamh)}_{\HS}^4
   + 2(\la{\M})^2
   \|\gam - \gamh\|_\HS^4. \label{eq:temp_estimate2}
\end{align}
The result~\eqref{eq:3.5.2} follows from inserting~\eqref{eq:temp_estimate2} into~\eqref{eq:temp_estimate}.
\ep

\subsection{Bound on the $\HH$-norm of the density matrix error}

Finally, we provide two estimates for the Hilbert--Schmidt norm of the density matrix error. The second bound makes appear the Hilbert--Schmidt norm of the cluster residual $\Resc$, already present in the bounds above. The first bound measures the residual further scaled by $\A^{-1/2}$; it is typically sharper but can be less straightforward to estimate further.

\bl[Bounds on the density matrix error]
\label{lem:DM_bounds} Let  Assumptions~\ref{as:gap} and~\ref{as:gap_cont_disc} hold, let the density matrices $\gam$ and~$\gamh$ be respectively defined by~\eqref{def:gamma0} and~\eqref{def:gamh}, and let the cluster residual $\Resc$ be defined by~\eqref{def:globalresidual}. Set
\begin{equation}
   \label{eq:ch_tilde}
   \cht \eq \max \left[	(\ola{\m-1})^{-1/2}\left(\frac{\lah{\m}}{\ola{\m-1}}-1\right)^{-1} ,(\ula{\M+1})^{-1/2} \left(1 - \frac{\lah{\M}}{\ula{\M+1}} \right)^{-1}
      \right],
\end{equation}
the first term in the {\rm max} being discarded for $\m = 1$, and recall $c_h$ is defined in~\eqref{def:ch}.
Then there holds
\begin{equation}
   \label{eq:L2A-1}
   \| \gam - \gamh \|_{\HS} \le \sqrt{2} \ch \| \A^{-1/2}\Resc \|_\HS
\end{equation}
and
\begin{equation}
   \label{eq:L2A-1/2}
   \| \gam - \gamh \|_{\HS} \le \sqrt{2} \cht \| \Resc \|_\HS.
\end{equation}
\el

\bp
First, starting from~\eqref{eq:res_leftpart} with $s = 0$, neglecting again some positive terms in the sum, and bounding below the eigenvalue part with the help of $\cht$, we obtain
\begin{align*}
   \| \Resc \|_\HS^2 = {} & \sum_{k\ge 1} \sum_{i=\m}^\M
   \frac{(\la{k}-\lah{i})^2}{\la{k}}
   |\left(\uh{i}, \ue{k}\right)_\sHH|^2 \\
   \ge {} & \sum_{\substack{k\ge 1 \\ k\notin \{\m, \ldots, \M \}}} \sum_{i=\m}^\M
   \frac{(\la{k}-\lah{i})^2}{\la{k}}
   |\left(\uh{i}, \ue{k}\right)_\sHH|^2 \\
   \ge {} & \cht^{-2}
   \sum_{\substack{k\ge 1 \\ k\notin \{\m, \ldots, \M \}}} \sum_{i=\m}^\M
   |\left(\uh{i}, \ue{k}\right)_\sHH|^2.
\end{align*}
Similarly, for the $\A^{-1/2}$-scaled residual, there holds
\begin{align*}
   \| \A^{-1/2}\Resc \|_\HS^2 = {} & \sum_{k\ge 1} \sum_{i=\m}^\M
   \frac{(\la{k}-\lah{i})^2}{(\la{k})^2}
   |\left(\uh{i}, \ue{k}\right)_\sHH|^2 \\
   \ge {} & \sum_{\substack{k\ge 1 \\ k\notin \{\m, \ldots, \M \}}} \sum_{i=\m}^\M
   \frac{(\la{k}-\lah{i})^2}{(\la{k})^2}
   |\left(\uh{i}, \ue{k}\right)_\sHH|^2 \\
   \ge {} & \ch^{-2}
   \sum_{\substack{k\ge 1 \\ k\notin \{\m, \ldots, \M \}}} \sum_{i=\m}^\M
   |\left(\uh{i}, \ue{k}\right)_\sHH|^2.
\end{align*}
Moreover,
from Lemma~\ref{lem:expan_gamerr}, expanding the expression in terms of the eigenvectors,
\begin{align*}
   \|\gam-\gamh\|_{\HS}^2 & =
   2 \Tr(\gamh(1-\gam))
    = 2 \sum_{i=\m}^\M
   \left(\uh{i}, (1-\gam) \uh{i}\right)_\sHH 
   \\ & =
    2 \sum_{\substack{k\ge 1 \\ k\notin \{\m, \ldots, \M \}}} \sum_{i=\m}^\M
   |\left(\uh{i}, \ue{k}\right)_\sHH|^2,
\end{align*}
from which we easily deduce~\eqref{eq:L2A-1} and~\eqref{eq:L2A-1/2}.
\ep

By combining equations~\eqref{eq:L2A-1} or~\eqref{eq:L2A-1/2} with \eqref{eq:estimlambda} and~\eqref{eq:3.5.1} or~\eqref{eq:3.5.1bis}, it is possible to obtain estimates for the errors on the density matrix as well as on the sum of the eigenvalues which only depend on the dual norm of the cluster residual together with the exact eigenvalues $\la{\m-1},\la{\M}$, and $\la{\M+1}$ (or their corresponding lower and upper bound according to Assumption~\ref{as:gap_cont_disc}); the converse estimate~\eqref{eq:3.5.2}, not necessary in practice to guarantee upper bounds of the error and only used to theoretically assess the efficiency of the estimates, also employs $\la{1}$ and $\la{\m}$.
Note that practically computable bounds on these eigenvalues are obtained following Remark~\ref{rem_pract}. 
Using such bounds, computable estimates are obtained provided the dual norm of the residual $\|\Resc\|_{\HS}$ can be evaluated or estimated. This is possible for specific operators and numerical methods, as illustrated in the next section.

\section{Guaranteed and computable {\it a posteriori} error estimates}
\label{sec:apost_estimates}

In this section, we transform the estimates presented in Section~\ref{sec_equiv} into fully guaranteed and computable estimators in two particular cases. First, we focus on the Laplace operator $-\Delta$ with homogeneous Dirichlet conditions discretized with conforming finite elements, for which the dual norm of the residual was estimated in~\cite{Canc_Dus_Mad_Stam_Voh_eigs_conf_17}, based on~\cite{Prag_Syng_47, Dest_Met_expl_err_CFE_99, Brae_Pill_Sch_p_rob_09, Ern_Voh_p_rob_15}. We then present estimates for a Schr\"odinger operator of the form $-\Delta+V$ on a cubic box with periodic boundary conditions, where $V$ is a bounded-below periodic multiplicative potential, discretized with planewaves, in which case the dual norm of the residual can be easily computed.

\subsection{Finite element discretization of the Laplace operator}
\label{sec:Laplace_apost}

In this section, we consider the Laplace eigenvalue problem with Dirichlet boundary conditions.
Let $\Omega \subset \R^d$, $ d = 2,3$, be a polygonal/polyhedral domain with a Lipschitz boundary.
In this setting, $A=-\Delta$, $\HH = \Lt$, and $D(A^{1/2})=\W\eq \Hoo$. Let $\Hdv$ stand for the space of $[\Lt]^d$
functions with weak divergences in $\Lt$, and let $\Dt$ be the dual of $\Hoo$.
The problem reads: find eigenvector and eigenvalue pairs $(\ue{k},\la{k})$ such that $-\Delta \ue{k} = \la{k} \ue{k}$ in $\Omega$, subject to the orthonormality constraints $(\ue{k},\ue{j})_\sHH = \delta_{kj}$, $k,j \geq 1$. In weak form, this reads:
find $(\ue{k}, \la{k})_\sHH \in \W \times
\R_+$ with $(\ue{k},\ue{j})_\sHH = \delta_{kj}$ such that
\begin{equation}
  \label{eq_problem_weak}
	(\Gr \ue{k}, \Gr v)_\sHH = \la{k} (\ue{k}, v)_\sHH \qquad \forall v \in \W.
\end{equation}
Here, for $\omega \subset \Om$, $(\Gr u, \Gr v)_\omega$ stands for
$\int_\omega \Gr u \scp \Gr v$ and $(u, v)_\omega$ for
$\int_\omega u v$; we also denote $\norm{\Gr v}_\omega^2$ $\eq
\int_\omega |\Gr v|^2$ and $\norm{v}_\omega^2 \eq \int_\omega v^2$ and drop the index
whenever $\omega = \Om$.

We consider a conforming finite element discretization of this problem.
Let $\{\Th\}_h$ be a family of meshes, matching
simplicial partitions of the domain $\Om$. We suppose that it is
shape regular in the sense that there exists a constant $\kappa_\T>0$
such that the ratio of the element diameter and of the diameter of
its largest inscribed ball is uniformly bounded by $\kappa_\T$, \cf\
Ciarlet~\cite{Ciar_78}. A generic element of
$\Th$ is denoted by $\elm$. The set of vertices of $\Th$ is denoted by $\Vh$, the set of interior
vertices by $\Vhint$, the set of vertices located on the boundary by $\Vhext$, and a
generic vertex by $\ver$. We denote by $\Ta$ the patch of elements of $\Th$
which share the vertex $\ver \in \Vh$, by $\oma$ the corresponding
open subdomain, and by $\tn_\oma$ its outward unit normal. We will often
tacitly extend functions defined on $\oma$ by zero outside of $\oma$,
whereas $V_h(\oma)$ stands for the restriction of the space $V_h$ to
$\oma$. Next, $\psi_\ver$ for $\ver \in \Vh$ stands for the piecewise
affine ``hat'' function taking value $1$ at the vertex $\ver$ and
zero at the other vertices. Note that $(\psi_\ver)_{\ver \in \Vh}$
form a partition of unity since $\sum_{\ver \in \Vh} \psi_\ver =
1|_\Om$.

Let $\PP_{s}(\elm)$, $s \geq 0$, stand for the space polynomials on $\elm$ of total degree at most $s$, and $\PP_{s}(\Th)$ for the space of piecewise polynomials on $\Th$, without any continuity requirement at the element interfaces.
The approximation space is $V_h \eq \PP_{p}(\Th) \cap \W$ for a given polynomial degree $p \geq 1$.
Let also $\tV_h \times Q_h \subset \Hdv \times \Lt$ stand for the Raviart--Thomas--N\'ed\'elec (RTN) mixed finite element spaces of order $p+1$, \ie, $\tV_h \eq \{\tv_h \in \Hdv;
\tv_h|_\elm \in [\PP_{p+1}(\elm)]^d + \PP_{p+1}(\elm) \tx\}$ and $Q_h \eq \PP_{p+1}(\Th)$, see Brezzi and Fortin~\cite{Brez_For_91} or Roberts and Thomas~\cite{Ro_Tho_91}. We also denote by $\Pi_{Q_h}$ the
$\Lt$-orthogonal projection onto $Q_h$.

The discretized eigenvalue problem then reads in this case: find $(\uh{k}, \lah{k}) \in V_h \times \R_+$ with $(\uh{k},\uh{j})=\delta_{kj}$, $1 \leq k,j \leq \dim V_h$, such
that
\begin{equation} \label{eq_FE}
	(\Gr \uh{k}, \Gr v_h)_\sHH = \lah{k} (\uh{k}, v_h)_\sHH \qquad \forall v_h \in V_h.
\end{equation}

\subsubsection{Residual norm estimate}

In order to turn the error estimates obtained in~\S\ref{sec_equiv} into practical ones, we need to estimate the dual norm of the residual $\| \Resc \|_{\HS}$, which in turn requires an estimate of
$\norm{\Res(\uh{i},\lah{i})}_{\Dt}$ for $i=\m,\ldots,\M$.
The latter estimate relies on previous works~\cite{Prag_Syng_47, Dest_Met_expl_err_CFE_99,
Brae_Pill_Sch_p_rob_09, Ern_Voh_p_rob_15} and has been presented for the Laplace eigenvalue problem in~\cite{Canc_Dus_Mad_Stam_Voh_eigs_conf_17}. We recall the key points here for the sake of completeness.

From~\eqref{eq_problem_weak}, it is easy to see that for all $i\ge 1,$ there holds $-\Gr \ue{i} \in \Hdv$, with the weak divergence
equal to $\la{i} \ue{i}$. However, this does not hold at the discrete level, \ie, in general, $-\Gr \uh{i} \not \in \Hdv$, and a fortiori $\Dv(-\Gr \uh{i})
\neq \lah{i} \uh{i}$.
We therefore introduce an {\em equilibrated flux
reconstruction}, a vector field $\frh{i}$ constructed from $(\uh{i}, \lah{i})$,
satisfying
\bse \label{eq_flux}
\ba
  \frh{i} & \in \Hdv, \label{eq_flux_rec} \\
  \Dv \frh{i} & = \lah{i} \uh{i}. \label{eq_flux_rec_eq}
\ea
\ese
In the context of conforming finite elements, the flux reconstruction $\frh{i}$ for $i=\m,\ldots,\M$ can be constructed from the following {\em local
constrained minimizations}:

\bd[Equilibrated flux reconstruction] \label{def_fr} For a mesh vertex $\ver \in \Vh$, set
\batn{2}
    & \begin{array}{l}
        \tV_h^\ver \eq \{ \tv_h \in \tV_h(\oma); \tv_h
        \scp \tn_\oma = 0 \text{ on } \pt \oma\}, \\[1mm]
        Q_h^\ver \eq \{q_h \in Q_h(\oma); (q_h, 1)_\oma = 0 \},\\
    \end{array} \quad \qquad & & \ver \in \Vhint, \\[1mm]
    & \begin{array}{l} \tV_h^\ver \eq \{ \tv_h \in \tV_h(\oma); \tv_h
        \scp \tn_\oma = 0 \text{ on } \pt \oma \setminus \pt \Om\}, \\[1mm]
        Q_h^\ver \eq Q_h(\oma), \\
    \end{array} & & \ver \in \Vhext.
\eatn
Then define $\frh{i} \eq \sum_{\ver \in \Vh} \frh{i}^\ver \in \tV_h$, where
$\frh{i}^\ver \in \tV_h^\ver$ solve
\begin{equation}
  \label{eq_fl_equil_min}
     \frh{i}^\ver \eq \arg\min_{\substack{\tv_h \in \tV_h^\ver, \\ \Dv \tv_h =
     \Pi_{Q_h} (\lih \uih \psi_\ver - \Gr \uih \scp \Gr \psi_\ver )}} \norm{\psi_\ver \Gr \uih + \tv_h}_\oma \qquad \forall \ver \in
     \Vh.
\end{equation}
\ed

The Euler--Lagrange equations for~\eqref{eq_fl_equil_min} give the
standard {\em mixed finite element formulation},
\cf~\cite[Remark~3.7]{Ern_Voh_p_rob_15}: find $\frh{i}^\ver \in \tV_h^\ver$
and $p_h^\ver \in Q_h^\ver$ such that
\bse \label{eq_fl_equil} \bat{2}
    (\frh{i}^\ver, \tv_h)_\oma - (p_h^\ver, \Dv \tv_h)_\oma & = - (\psi_\ver  \Gr \uih, \tv_h)_\oma
    & \qquad & \forall \tv_h \in \tV_h^\ver, \label{eq_fl_equil_1}\\
    (\Dv \frh{i}^\ver,q_h)_\oma & = (\lih \uih \psi_\ver -  \Gr \uih \scp \Gr \psi_\ver, q_h)_\oma &
    \qquad &\forall q_h \in Q_h^\ver. \label{eq_fl_equil_2}
\eat\ese
Consequently, $\Dv \frh{i} = \lih \uih$, \cf, \eg,
\cite[Lemma~3.5]{Ern_Voh_p_rob_15}.

On each patch $\oma$ around the vertex $\ver \in \Vh$,
define
\bse \label{eq_Hsa} \bat{2}
    \Hsa & \eq \{v \in \Hoi{\oma}; \, (v, 1)_\oma = 0\}, \qquad \qquad & & \ver \in \Vhint, \\
    \Hsa & \eq \{v \in \Hoi{\oma}; \, v = 0 \text{ on } \pt \oma \cap \pt \Om\}, & & \ver \in
    \Vhext.
\eat \ese

Following Carstensen and
Funken~\cite[Theorem~3.1]{Cars_Funk_full_rel_FEM_00}, Braess~\eal\
\cite[\S3]{Brae_Pill_Sch_p_rob_09}, or~\cite[Lemma~3.12]{Ern_Voh_p_rob_15},
there exists a constant $C_{\rm cont, PF}$ only depending on the mesh
regularity parameter $\kappa_\T$ such that
\begin{equation}
  \label{eq_eff_cont_fr}
     \norm{\Gr(\psi_\ver v)}_\oma \leq C_{\rm cont, PF} \norm{\Gr
         v}_\oma \qquad \forall v \in \Hsa, \, \forall \ver \in \Vh.
\end{equation}
Moreover, the key result of Braess~\eal\
\cite[Theorem~7]{Brae_Pill_Sch_p_rob_09}, see~\cite[Corollaries~3.3
and~3.6]{Ern_Voh_p_rob_3D_16} for $d=3$, states that the
reconstructions of Definition~\ref{def_fr} satisfy the following {\em
stability} property,
\begin{equation}
  \label{eq_MFE_stab}
      \norm{\psi_\ver  \Gr \uih + \frh{i}^\ver}_\oma \leq C_{\rm st} \sup_{\substack{v \in \Hsa\\
          \norm{\Gr v}_\oma = 1}} \{ \langle \Res(\uih,\lih),\psi_\ver v\rangle_{\Dt,\Hoor} \}.
\end{equation}
The constant $C_{\rm st} > 0$ again only depends on $\kappa_\T$, and a
computable upper bound on $C_{\rm st}$ is given
in~\cite[Lemma~3.23]{Ern_Voh_p_rob_15}.

In this setting, the dual norm of the residual can be bounded as follows.

\bt[Residual equivalences] \label{thm_res_bounds} For $i=\m,\ldots,\M,$
let $(\uh{i},\lah{i}) \in
V_h \times \real$ be defined in~\eqref{eq_FE}. Then, for the reconstruction $\frh{i}$ from
Definition~\ref{def_fr},
\bse
\ba
       \norm{\Res(\uh{i},\lah{i})}_{\Dt}  & \leq \norm{\Gr \uh{i} + \frh{i}}_\sHH, \label{eq_res_bounds} \\
    \norm{\Gr \uh{i} + \frh{i}}_\sHH & \le (d+1)C_{\rm st} C_{\rm cont, PF}\norm{\Res(\uh{i},\lah{i})}_{\Dt}. \label{eq:efficiency_res}
\ea
\ese
Therefore, there holds
\bse
\ba
         \| \Resc \|_\HS^2 & \le \sum_{i=\m}^\M \norm{\Gr \uh{i} + \frh{i}}_\sHH^2, \label{eq_res_bounds2} \\
     \sum_{i=\m}^\M \norm{\Gr \uh{i} + \frh{i}}_\sHH^2 & \le  (d+1)^2 C_{\rm st}^2 C_{\rm cont, PF}^2  \| \Resc \|_\HS^2. \label{eq:efficiency_res2}
\ea
\ese
\et

\bp Fix $v \in \W$ with $\norm{\Gr v}_\sHH = 1$. Starting from~\eqref{eq_res},
adding and subtracting $(\frh{i}, \Gr v)_\sHH$, applying Green's theorem and
using~\eqref{eq_flux_rec_eq} yields
\[
	\langle\Res(\uh{i},\lah{i}),v\rangle_{\Dt,\Hoor} {} = \lih (\uh{i}, v)_\sHH - (\Gr \uh{i}, \Gr v)_\sHH
        = - (\Gr \uih + \frh{i}, \Gr v)_\sHH.
\]
Then, definition~\eqref{eq_res_norm} of the dual norm of the residual and the
Cauchy--Schwarz inequality yield~\eqref{eq_res_bounds}. This actually also holds when choosing for $\tV_h$ the
cheaper RTN space of order $p$ (instead of $p+1$), as~\eqref{eq_flux_rec_eq} still holds for Definition~\ref{def_fr} with this choice.
As in~\cite{Canc_Dus_Mad_Stam_Voh_eigs_conf_17}, the proof of~\eqref{eq:efficiency_res} relies on~\cite[Lemma 3.22]{Ern_Voh_p_rob_15}, where the weak norm $\norm{\Res(\uh{i},\lah{i})}_{\Dt}$ is treated as in~\cite[Theorems 3.3 and 4.8]{Ciar_Voh_chan_coef_18}.
Finally, the bounds~\eqref{eq_res_bounds2} and~\eqref{eq:efficiency_res2} directly follow from~\eqref{eq_res_bounds}, and~\eqref{eq:efficiency_res} combined with Lemma~\ref{Lem:res}.
\ep

\subsubsection{Final estimates} \label{sec_FE_fin}

We combine here the results of the previous sections to derive the
actual guaranteed and fully computable bounds. We will denote by
$\zeta_{(ih)}$ the solution of the Laplace {\em source problem} $-
\Lap \zeta_{(ih)} = \res$ in $\Om$, $\zeta_{(ih)} = 0$ on $\pt \Om$, \ie, $\zeta_{(ih)} \in \W$ such that
\begin{equation}
   \label{eq_adj}
      (\Gr \zeta_{(ih)}, \Gr v) = (\res, v) \qquad \forall v \in \W,
\end{equation}
where $\res \in \W$ is the {\em Riesz representation} of the residual defined by %
\bse \ba
    (\Gr \res, \Gr v) & = \langle\Res(\uih,\lih),v\rangle_{\Dt,\Hoor} \qquad \forall v \in \W, \label{eq_lift_res}\\
    \norm{\Gr \res}_\sHH & = \norm{\Res(\uih,\lih)}_{\Dt}, \label{eq_lift_res_norm}
\ea \ese
\cf~\eqref{eq_Riesz}.

\bt[Guaranteed bounds for the sum of eigenvalues] \label{thm_bound_eigs}
Let $\m,\M\in\N\backslash \{0\}, M \ge m$, and let Assumptions~\ref{as:gap} and~\ref{as:gap_cont_disc} hold.
For $i=\m,\ldots,\M$, let $(\uh{i},\lah{i})\in V_h \times \R_+$ be given by~\eqref{eq_FE}.
\noindent For $i=\m,\ldots,\M$, let next $\frh{i}$ be constructed following
Definition~\ref{def_fr} and define
\begin{equation}
	\label{eq:GlobInd}
    \eres^2 \eq \sum_{i=\m}^\M \norm{\Gr \uih + \frh{i}}^2_\sHH.
\end{equation}
Recall the notations~\eqref{def:ch} and~\eqref{eq:ch_tilde}.
Then 
\begin{equation}
   \label{eq_eig_bound}
    0 \le \sum_{i=\m}^\M (\lah{i} - \la{i}) \le \eta^2,
\end{equation}
where we distinguish the following two cases:

\noindent {\bf Case I} {\rm (General case)} Let Assumption~\ref{as:nonorth} hold. Then~\eqref{eq_eig_bound} holds with
\begin{equation}
   \eta^2 \eq (2 \chn^2+ 2\lah{\M} \chtn^4 \eres^2  ) \eres^2.
      \label{eq_eig_simpl_down}
\end{equation}
\noindent {\bf Case II} {\rm (Optimal estimates under elliptic regularity
assumption)} Assume that for $i=\m,\ldots,\M,$ the solutions
$\zeta_{(ih)}$ of problems~\eqref{eq_adj} belong to the space
$H^{1+\delta}(\Om)$, $0 < \delta \leq 1$, so that the approximation and
stability estimates
\bse\label{eq_el_reg} \ba
    \min_{v_h \in V_h} \norm{\Gr(\zeta_{(ih)} - v_h)} & \leq C_{\rm I} h^\delta
    |\zeta_{(ih)}|_{H^{1+\delta}(\Om)}, \label{eq_int}\\
    |\zeta_{(ih)}|_{H^{1+\delta}(\Om)} & \leq C_{\rm S} \norm{\res}_\sHH \label{eq_stab}
\ea \ese
are satisfied.
Then~\eqref{eq_eig_bound} holds with
\begin{equation}
   \label{eq_eig_down}
      \eta^2 \eq (1+ 4\lah{\M} \chn^2 C_I^2 C_S^2 h^{2\delta} ) \eres^2.
\end{equation}
\et

\bp{\bf (Case I)} Combining the estimates~\eqref{eq:estimlambda}, \eqref{eq:3.5.1bis}, \eqref{eq:L2A-1/2} together with \eqref{eq:conf_approx} and~\eqref{eq_res_bounds2} yield the result.

\noindent {\bf (Case II)}
The proof is as in Case~I, relying \eqref{eq:3.5.1} instead of~\eqref{eq:3.5.1bis} and on~\eqref{eq:L2A-1} instead of~\eqref{eq:L2A-1/2}. Using the characterization~\eqref{eq:res_leftpart} and similarly as in~\eqref{eq_Riesz_dev} in Lemma~\ref{Lem:res}, one can show that
\[
   \| \A^{-1/2}\Resc \|_\HS^2 = \sum_{i=\m}^\M \|\res\|^2_\sHH,
\]
where $\res$ is defined in~\eqref{eq_lift_res}.
Now, using an Aubin--Nitsche trick,~\eqref{eq_adj}, \eqref{eq_lift_res}, \eqref{eq_res} , and the discrete problem equation~\eqref{eq_FE},
we get
\[
    \norm{\res}^2 = (\Gr \zeta_{(ih)}, \Gr \res) = (\Gr (\zeta_{(ih)} - \zeta_{ih}), \Gr \res),
\]
where $\zeta_{ih} \in V_h$ is the minimizer in~\eqref{eq_int}.
Using the Cauchy--Schwarz inequality, estimates~\eqref{eq_el_reg},
and the characterization~\eqref{eq_lift_res_norm} altogether give
\[
    \norm{\res} \leq C_{\rm I} C_{\rm S} h^\delta
    \norm{\Res(\uih,\lih)}_{\Dt}.
    \ifSIAM \qquad \endproof \else \fi
\]
Therefore, also using Lemma~\ref{Lem:res},
\begin{equation}
   \label{eq:resA-1Laplacien}
      \| \A^{-1/2}\Resc \|_\HS^2 \le (C_{\rm I} C_{\rm S} h^\delta)^2 \sum_{i=\m}^\M \|\Res(\uih,\lih)\|^2_{\Dt} = (C_{\rm I} C_{\rm S} h^\delta)^2\|\Resc\|_{\HS}^2.
\end{equation}
Thus, estimates~\eqref{eq:estimlambda}, \eqref{eq:3.5.1}, \eqref{eq:L2A-1} together with \eqref{eq:conf_approx} and~\eqref{eq_res_bounds2} yields the result.

\ep

\br[Constants $C_{\rm I}$ and $C_{\rm S}$] \label{rem_C_I_C_st}
As discussed in~\cite{Canc_Dus_Mad_Stam_Voh_eigs_conf_17},
it is possible to obtain explicit bounds for the constants $C_{\rm I}$ and $C_{\rm S}$ in particular cases, \eg, when $\Om$ is
a convex polygon in $\real^2$.
In this case, the solution of the source problem
$\zeta_{(ih)}$ of~\eqref{eq_adj} belongs to $H^2(\Om)$ and
$|\zeta_{(ih)}|_{H^2(\Om)} = \norm{\Lap \zeta_{(ih)}} = \norm{\res}$, so it is possible to take
$\delta = 1$ and $C_{\rm S} = 1$, see~\cite[Theorem~4.3.1.4]{Gris_ell_nonsm_85}.
Computable bounds for $C_{\rm I}$ can be found in Liu and
Kikuchi~\cite{Liu_Kik_interp_10}, Carstensen~\eal\
\cite{Cars_Ged_Rim_expl_cnst_12}, and Liu and
Oishi~\cite[\S2]{Liu_Oish_bounds_eig_13}. Note that in the particular case of a mesh formed by isosceles rectangular triangles, there holds $C_{\rm I}
\leq \frac{0.493}{\sqrt 2}$. \er

\br[Improved guaranteed upper bounds for the eigenvalues]
 Similarly as in~\cite[Theorem 5.2]{Canc_Dus_Mad_Stam_Voh_eigs_conf_17}, it is possible to estimate $\norm{\Res(\uih,\lih)}_{\Dt}$ from below and combine this lower bound with~\eqref{eq:estimlambda} and~\eqref{eq:3.5.2} to obtain guaranteed improved upper bounds for the eigenvalues. For brevity, we do not state such results here.
\er

\bt[Guaranteed and polynomial-degree robust bound for the density matrix error]
\label{thm_bound_eigvecs} Let the assumptions of Theorem~\ref{thm_bound_eigs}
be verified. Then the energy density matrix error can be bounded via
\begin{equation}
       \norm{|\Gr| (\gam - \gamh)}_\HS \leq \eta, \label{eq_eigvec_rel}
\end{equation}
where $\eta$ is defined in the Case~I by~\eqref{eq_eig_simpl_down}
and in Case~II by~\eqref{eq_eig_down}.
Moreover, the
density matrix error can be bounded by
\begin{equation}
  \label{eq:gam_err}
  \norm{\gam - \gamh}_\HS \le
  \etaltwo,
\end{equation}
where
\begin{subnumcases}{\etaltwo \eq }
   \sqrt{2} \chtn \eres \quad & \text{(Case I)}, \label{etaltwo_I}
   \\
   \sqrt{2} \chn C_{\rm I} C_{\rm S} h^\delta \eres & \quad \text{(Case II)}. \label{etaltwo_II}
\end{subnumcases}
Recall finally the definition of $\chb$ by~\eqref{eq_chb}. Under Assumption~\ref{as:nonorth}, the estimator~$\eta$ is efficient as
\begin{equation} \label{eq_eigvec_eff} \bs
      \eres^2 \leq {} & (d  +  1)^2 C_{\rm st}^2 C_{\rm cont, PF}^2 \Bigg(
      \chb
      \| |\Gr| (\gam - \gamh)\|^2_\HS +  \frac{3 (\la{\M} -\la{\m})^2}{4\la{\m}} \|\gam - \gamh\|_\HS^4\\
      {} & + \frac{3}{\la{\m}} \left( 1 + \frac{1}{4}\|\gam - \gamh\|_\HS^4 \right)\times\\
      {} & \left[2   \left( 1+ \frac{\la{\M}}{4\la{\m}}\|\gam - \gamh\|_{\HS}^2 \right)^2
      \norm{|\Gr|(\gam-\gamh)}_{\HS}^4
      + 2(\la{\M})^2
      \|\gam - \gamh\|_\HS^4\right]
          \Bigg).
\es \end{equation}
This in particular implies that the bound~\eqref{eq_eigvec_rel} is efficient in the sense that
\be \label{eq_eff}
    \eta \leq C \norm{|\Gr| (\gam - \gamh)}_\HS,
\ee
where $C$ is a constant independent of the mesh size $h$ and the polynomial degree $p$.
\et

\bp
The proof of~\eqref{eq_eigvec_rel} is actually contained in the proof of~\eqref{eq_eig_bound} which relies on~\eqref{eq:estimlambda}. In Case~I, the estimate~\eqref{eq:gam_err} follows from~\eqref{eq:L2A-1/2} and~\eqref{eq_res_bounds2}, whereas in Case~II, the bound~\eqref{eq:gam_err} can be derived from~\eqref{eq:L2A-1} and~\eqref{eq:resA-1Laplacien} combined with~\eqref{eq_res_bounds2}.
The bound~\eqref{eq_eigvec_eff} is a consequence of~\eqref{eq:3.5.2} and~\eqref{eq:efficiency_res2}.
Finally, \eqref{eq_eff} follows from~\eqref{eq_eig_simpl_down} or~\eqref{eq_eig_down} in combination with~\eqref{eq_eigvec_eff}, the (crude) bound $\norm{\gam - \gamh}_{\HS}^2 \le 4 \J$, \cf~\eqref{eq:Trgam2}, the equivalence~\eqref{eq:eqL2}, the Poincar\'e inequality $\norm{\Phie-\Phiho}_{\sHH}^2 \leq \norm{\Gr (\Phie-\Phiho)}_{\sHH}^2 / \la{1}$, and the equivalence~\eqref{eq:eqH1}.
\ep

\subsection{Planewave discretization of a Schr\"odinger operator}
\label{sec:PW_theory}

In this section, we consider a Schr\"odinger-type operator of the form $-\Delta+V$, with periodic boundary conditions.
We denote by $\Omega \subset \R^d$, $d=1,2,3$, the periodic cell, by $\RR$ the periodic lattice, and by $\RRs$ the corresponding dual lattice. For simplicity, we assume that $\Omega = [0,L)^d$, $(L > 0)$, in which case $\RR$ is the cubic lattice $L\Z^d$,
and $\RRs = 2\pi\Z^d$. Our arguments can be easily extended to the general case.
The potential $V$ is multiplicative and satisfies $V\in L^\infty_\#(\Omega)$, where, for ${s}\ge 1$,
\[
  L^{s}_\#(\Omega) = \left\{ v \in L^{s}_{\rm loc}(\R^d), \quad v \quad \RR\text{-periodic}   \right\}.
\]
Up to shifting the operator $-\Delta+V$ by a positive constant, we can assume that $V \ge 1$.

For $\kk\in\RRs$, we denote by $e_\kk(x) = |\Omega|^{-1/2} e^{i \kk \cdot x}$ the planewave with wavevector $\kk$. The family $(e_\kk)_{\kk\in\RRs}$ forms an orthonormal basis of $L^2_\#(\Omega)$. Moreover, for all $v\in L^2_\#(\Omega)$,
\[
    v(x) = \sum_{\kk\in\RRs} \hat v_\kk e_\kk(x), \quad \text{where} \quad
    \hat v_\kk = (e_\kk,v)_{L^2_\#(\Omega)} = |\Omega|^{-1/2} \int_\Omega v(x) e^{-i \kk \cdot x} dx.
\]
Let us take in this case $\HH = L^2_\#(\Omega)$ and $D(A^{1/2})= \W \eq H^1_\#(\Omega)$ endowed with the norm
\[
    \| v \|_{D(A^{1/2})}^2 \eq \| \Gr v\|_\sHH^2 + (v,Vv)_\sHH \ge \|v\|_{H^1_\#(\Omega)}^2,
\]
where we endow the Sobolev spaces of real-valued $\RR$-periodic functions
\begin{align*}
   H^s_\#(\Omega) \eq \Bigg\{
      & v(x) = \sum_{\kk\in\RRs} \hat v_\kk e_\kk(x),
        \quad \text{where} \\ & \quad
      \|v\|_{H^1_\#(\Omega)}^2 \eq \sum_{\kk\in\RRs} (1+|\kk|^2)^s |\hat v_\kk|^2 < \infty,  \text{ and }
      \forall \kk, \, \hat v_{-\kk} = \hat v_\kk^\ast
      \Bigg\},  
\end{align*}
with the inner products
\[
    (v,w)_{H^s_\#(\Omega)} \eq \sum_{\kk\in\RRs} (1+|\kk|^2)^s \overline{\hat v_\kk} \hat w_\kk.
\]
Note that the constraints $\hat v_{-\kk} = \hat v_\kk^\ast$ imply that the functions are real-valued.

The eigenvalue problem reads in this case:
find eigenvector and eigenvalue pairs $(\ue{i},\la{i})$ subject to the orthonormality constraints $(\ue{i},\ue{j})_\sHH = \delta_{ij}$, $i,j \geq 1$, such that $(-\Delta +V) \ue{i} = \la{i} \ue{i}$ in $\Omega$. In weak form, this reads:
find $(\ue{i}, \la{i})_\sHH \in \W \times
\R_+$ with $(\ue{i},\ue{j})_\sHH = \delta_{ij}$ such that
\begin{equation}
  \label{eq_problem_weak_PW}
	(\Gr \ue{i}, \Gr v)_\sHH + (\ue{i}, V v) = \la{i} (\ue{i}, v)_\sHH \qquad \forall v \in \W.
\end{equation}

For $N\in\N\backslash\{0\},$ we consider the approximation space
\[
    \VN \eq \left\{
    \sum_{\substack{\kk\in\RRs \\ |\kk|\le \frac{2\pi}{L}N }} \hat v_\kk e_\kk(x),
    \quad \forall \kk, \, \hat v_{-\kk} = \hat v_\kk^\ast
    \right\}.
\]
The discrete problem then reads: find eigenpairs $(\un{i}, \lan{i}) \in
\VN \times \R_+$ with $(\un{i}, \un{j})_\sHH = \delta_{ij}$, $1 \leq i,j \leq N$, such
that
\begin{equation} \label{eq_PW}
	(\Gr \un{i}, \Gr v_N)_\sHH + (\un{i}, V v_N)_\sHH  = \lan{i} (\un{i}, v_N)_\sHH \qquad \forall v_N \in \VN.
\end{equation}
Given $\m,\M\in \N\backslash \{ 0 \}$, $\M\ge\m$, we focus on the eigenvalue cluster $(\lan{\m},\ldots,\lan{\M})$ and a set of the associated eigenvectors $(\un{\m},\ldots,\un{\M})$. Note that in this case, there actually holds $\VN \subset D(A)$ and not merely $\VN \subset D(A^{1/2})$ as supposed generally in~\eqref{eq:discretization}.

\subsubsection{Estimation of the dual norm of the residual}

In order to use the error estimates defined in Section~\ref{sec_equiv}, we need to estimate the Hilbert--Schmidt norm of the residual $\Resc$ defined in~\eqref{def:globalresidual}. As
\begin{equation} \label{eq_est_A}
    A \ge -\Delta+1 \ge 0, \quad A^{-1/2} \le (-\Delta+1)^{-1/2},
\end{equation}
the Hilbert--Schmidt norm of the residual can be estimated as follows, using the framework of Remark~\ref{rem_res_strong}.

\bc[Hilbert--Schmidt norm of the residual estimate] There holds
  \begin{equation} \label{eq:ineq_pot} \bs
      \|\Resc\|_{\HS}^2 & = \|A^{-1/2} \Resg\|_{\HS}^2 \le \|(-\Delta+1)^{-1/2} \Resg\|_{\HS}^2 \\
      & = \sum_{i=\m}^\M \norm{\Res(\un{i},\lan{i})}_{H^{-1}_\#(\Omega)}^2\\
      & = \sum_{i=\m}^\M \sup_{\substack{ v \in H^1_\#(\Omega)\\ \norm{v}_{H^1_\#(\Omega)} = 1}} \VpVS{\Res(\uh{i},\lah{i})}{v}.
  \es \end{equation}
\ec
Note that in the planewave setting, the Laplace operator is diagonal, so that in this case the quantity $\sum_{i=\m}^\M\norm{\Res(\un{i},\lan{i})}_{H^{-1}_\#(\Omega)}^2$ can actually be computed exactly at a negligible cost.

\br[Estimate~\eqref{eq:ineq_pot}]
Remark that inequality~\eqref{eq_est_A} is in fact independent of the choice of discretization. Therefore, \eqref{eq:ineq_pot} can also be used in the finite element setting, generalizing the estimates of Section~\ref{sec:Laplace_apost} for a Schr\"odinger operator on a torus.
\er

Actually, using the same argument, there holds
\begin{equation} \label{eq:reg_res_APW} \bs
    \| \A^{-1/2}\Resc \|_\HS^2 & =  \|A^{-1} \Resg\|_{\HS}^2 \le \|(-\Delta+1)^{-1} \Resg\|_{\HS}^2\\
     & = \sum_{i=\m}^\M\norm{\Res(\un{i},\lan{i})}_{H^{-2}_\#(\Omega)}^2.
\es \end{equation}
Since for $i=\m,\ldots,\M$, $\Res(\un{i},\lan{i}) \in \VN^\perp$, the orthogonal space of $\VN$ with respect to any $H^s_\#$ scalar product, there holds
\[
    \norm{\Res(\un{i},\lan{i})}_{H^{-s}_\#(\Omega)}^2
    = \sum_{\substack{\kk\in\RRs \\ |\kk| \ge \frac{2\pi}{L} N}} (1+|\kk|^2)^{-s}
    |\hat r^i_\kk|^2,
\]
where for $i=\m,\ldots,\M$, $(\hat r^i_\kk)_{\kk\in \RRs}$ are the planewave coefficients of $\Res(\un{i},\lan{i})$. Hence,
\begin{equation}
    \label{eq:reg_res_PW}
    \norm{\Res(\un{i},\lan{i})}_{H^{-2}_\#(\Omega)}
    \le \frac{L}{2\pi} \frac{1}{N} \norm{\Res(\un{i},\lan{i})}_{H^{-1}_\#(\Omega)}.
\end{equation}

\subsubsection{Final estimates} \label{sec_PW_fin}

We now state the guaranteed and fully computable error bounds for eigenvalues and density matrices of the operator $-\Delta+V$ discretized with planewaves.

\bt[Guaranteed bounds for the sum of eigenvalues] \label{thm_bound_eigs_PW}
Let $\m,\M\in\N\backslash \{0\}, M \ge m$, and let Assumption~\ref{as:gap} hold. For $i=\m,\ldots,\M$, let $(\un{i},\lan{i})\in \VN \times \R_+$ be defined in~\eqref{eq_PW}.
 Let  $ \ula{\M+1}$,  and $\ola{\m-1}$ if $\m>1$
 satisfying Assumption~\ref{as:gap_cont_disc} respectively with $\lan{\m}$ and $\lan{\M}$ in place of $\lah{\m}$ and $\lah{\M}$, i.e.
\begin{equation}
   \label{eq_set_pract_PW}
    \la{\m-1} \le \ola{\m-1} < \lan{\m} \text{ when } \m>1,
    \qquad
    \lan{\M} < \ula{\M+1} \le \la{\M+1},
\end{equation}
where we take $\ola{\m-1} = \lan{\m-1}$.
For $i=\m,\ldots,\M$, define
\[
    \eres^2 \eq \sum_{i=\m}^\M \norm{\Res(\un{i},\lan{i})}_{H^{-1}_\#(\Omega)}^2.
\]
Set
\begin{equation}
    c_{N}  \eq \max \left[	\left(\frac{\lan{\m}}{\ola{\m-1}}-1\right)^{-1} , \left(1 - \frac{\lan{\M}}{\ula{\M+1}} \right)^{-1}
              \right], \label{eq_cin}
\end{equation}
with the first term in the $\max$ discarded if $\m=1$. Then
\begin{equation}
   \label{eq_eig_bound_PW}
    0 \le \sum_{i=\m}^\M (\lah{i} - \la{i}) \le \eta^2,
\end{equation}
where
\begin{equation}
   \label{eq_eig_down_PW}
      \eta^2 \eq \left(1+  \frac{1}{N^2}  \frac{L^2\lan{\M}}{\pi^2}c_{N}^2 \right) \eres^2.
\end{equation}
\et

\bp Combining the estimates~\eqref{eq:estimlambda}, \eqref{eq:3.5.1}, \eqref{eq:L2A-1}, \eqref{eq:ineq_pot}, \eqref{eq:reg_res_APW}, and~\eqref{eq:reg_res_PW} yields the result.
\ep

Please note that in practice, condition~\eqref{eq_set_pract_PW} can be verified as in Remark~\ref{rem_pract}.

\bt[Guaranteed and robust bound for the density matrix errors]
\label{thm_bound_eigvecs_PW} Let the assumptions of Theorem~\ref{thm_bound_eigs_PW}
be verified. Then the energy density matrix error can be bounded via
\begin{equation}
       \norm{(-\Delta+V)^{1/2} (\gam - \gamh)}_\HS \leq \eta, \label{eq_eigvec_rel_PW}
\end{equation}
where $\eta$ is defined by~\eqref{eq_eig_down_PW}.
Moreover, the density matrix error can be bounded by
\begin{equation}
  \label{eq:gam_err_PW}
  \norm{\gam - \gamh}_\HS \le \eta_{L^2} \eq
  \sqrt{2} c_N \frac{L}{2\pi N} \eres.
\end{equation}
Recall finally the definition of $\chb$ by~\eqref{eq_chb}, with $\lan{\M}$ in place of $\lah{\M}$. Under Assumption~\ref{as:nonorth}, the
estimator~$\eta$ is efficient as
\begin{equation} \label{eq_eigvec_eff_PW} \bs
  \eres^2 \le {} & (\sup_{\Omega} V)
  \Bigg( \chb
  \| (-\Delta+V)^{1/2} (\gam - \gamh)\|^2_\HS +  \frac{3 (\la{\M} -\la{\m})^2}{4\la{\m}} \|\gam - \gamh\|_\HS^4 \\
  {} & + \frac{3}{\la{\m}} \left( 1 + \frac{1}{4}\|\gam - \gamh\|_\HS^4 \right)\times \\
  {} & \Big[2   \left( 1+ \frac{\la{\M}}{4\la{\m}}\|\gam - \gamh\|_{\HS}^2 \right)^2
  \norm{(-\Delta+V)^{1/2}(\gam-\gamh)}_{\HS}^4 
   \\ {} &
  + 2(\la{\M})^2
  \|\gam - \gamh\|_\HS^4\Big] \Bigg).
\es \end{equation}
\et

\bp
The proof of~\eqref{eq_eigvec_rel_PW} follows from the proof of~\eqref{eq_eig_bound_PW}. The estimate~\eqref{eq:gam_err_PW} follows from~\eqref{eq:L2A-1}, \eqref{eq:reg_res_APW}, and~\eqref{eq:reg_res_PW}.
Finally, the bound~\eqref{eq_eigvec_eff_PW} is a consequence of~\eqref{eq:3.5.2} and the inequality
\[
    \forall v\in H^1_\#(\Omega), \quad \|(-\Delta+V)^{1/2}v\|_\sHH^2
    = \|\Gr v \|_{L^2_\#}^2 + \int_\Omega V v^2
     \le (\sup_\Omega V) \|v\|_{H^1_\#(\Omega)}^2,
\]
which yields
\[
    \forall v\in H^{-1}_\#(\Omega), \quad \|(-\Delta+V)^{-1/2}v\|_\sHH^2 \ge \frac{1}{(\sup_\Omega V)} \|v\|_{H^{-1}_\#}^2.
\]
\ep

\section{Numerical experiments} \label{sec_num}

We now present some numerical results for two different examples. First, we perform simulations for the Laplace eigenvalue problem discretized with finite elements. Second, we show the estimates obtained for a Schr\"odinger operator on the torus discretized with planewaves.

\subsection{Laplace operator discretized with finite elements}
\label{sec:num_lap}
\begin{figure}[t]
	\centering
	\includegraphics[width=0.6\textwidth]{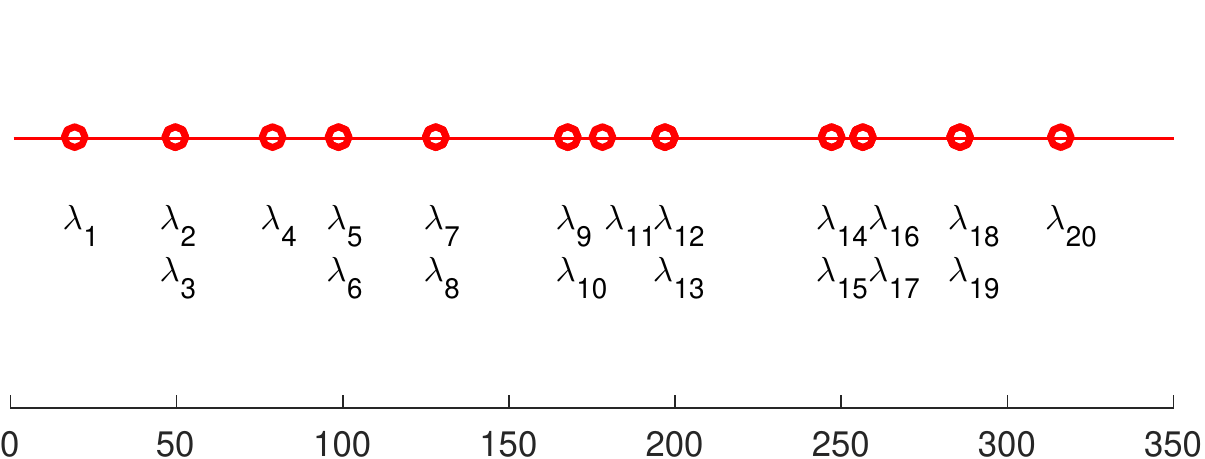}
	\caption{Plot of the first 20 eigenvalues of the Laplace operator with homogeneous Dirichlet boundary conditions on the unit square.}
	\label{fig:Spectrum}
\end{figure}
We start with a series of numerical examples using the conforming finite element method with piecewise linear polynomials, \ie, $p=1$, as presented in Section~\ref{sec:Laplace_apost} for the Laplace eigenvalue problem. We consider either the square domain $\Omega=(0,1)^2$ or an L-shaped domain with homogeneous Dirichlet conditions.
For the flux equilibration, we use the cheap Raviart--Thomas--N\'ed\'elec space of degree
$p=1$. This still provides guaranteed upper bounds, see the proof of Theorem~\ref{thm_res_bounds}, and we do not observe any asymptotic loss of the effectivity.
The numerical tests are performed with the FreeFem++ code~\cite{Hech_Pir_Hya_Oht_FreeFem++_12}.

Theorem~\ref{thm_bound_eigs} requires a lower bound $ \ula{\M+1}$, (recall that for the upper bound $\ola{\m-1}$ if $\m>1$, we simple use the numerically computed eigenvalue, \ie, $\ola{\m-1}=\lah{(\m-1)}\ge\la{\m-1}$, relying on the variational principle~\eqref{eq:conf_approx}).
A guaranteed lower bound $ \ula{\M+1}$ is obtained by employing the nonconforming finite element method on a coarse mesh $\mathcal{T}_{H}$ and using the technique presented in formula (6) of~\cite{Liu_fram_eigs_15}.

In the presentation of the results, we use the following notation:
\begin{align}
	{\tt Err}_{\lambda} \eq \sum_{i=\m}^\M (\lah{i} - \la{i}),
	\quad
	{\tt Err}_{H^1} \eq \norm{|\Gr| (\gam - \gamh)}_\HS,
	\quad
	{\tt Err}_{L^2} \eq \norm{\gam - \gamh}_\HS.
  \label{eq:notations}
\end{align}
The effectivity indices are then defined by
\[
	I^{{\tt eff}}_{\lambda} \eq \frac{\eta^2}{{\tt Err}_{\lambda}},
	\qquad
	I^{{\tt eff}}_{H^1} \eq \frac{\eta}{{\tt Err}_{H^1}},
	\qquad
	I^{{\tt eff}}_{L^2} \eq \frac{\etaltwo}{{\tt Err}_{L^2}},
\]
where $\eta$ and $\etaltwo$ are respectively defined in~\eqref{eq_eig_simpl_down} and~\eqref{etaltwo_I} for Case I and~\eqref{eq_eig_down} and~\eqref{etaltwo_II} for Case II.

\subsubsection{Unit square}
We first consider the unit square $\Omega= (0,1)^2$ where explicit eigenpairs are known.
Indeed, the sequence of eigenvalues is given by $\pi^2(k^2 + l^2)$, $k,l\in \mathbb N$, and the corresponding eigenvectors are $u_{k,l}=\sin(k\pi x)\sin(l \pi y)$.
The first few eigenvalues are therefore given by $\pi^2,5\pi^2,5\pi^2,8\pi^2,\ldots$ yielding a gap between the first and second, and the third and forth eigenvalues for example. Figure~\ref{fig:Spectrum} illustrates the first 20 eigenvalues and indicates the multiplicities.
For small eigenvalues, we use a coarse mesh $\mathcal{T}_{H,1}$ consisting of 121 triangles and 320 degrees of freedom and for larger eigenvalues, we use a second coarse mesh $\mathcal{T}_{H,2}$ consisting of 441 triangles and 1,240 degrees of freedom.
Since the domain is a convex polygon, we can apply {\bf Case~II} in Theorems~\ref{thm_bound_eigs} and~\ref{thm_bound_eigvecs} which exploits elliptic regularity results. We will here consider sequences of structured and uniformly refined meshes and use constants $C_{\rm I}
= \frac{0.493}{\sqrt 2}$, $C_{\rm S}=1$, and $\delta=1$ following Remark~\ref{rem_C_I_C_st}.

We first analyze the quality of the estimators for $m=2, M=3$.
The guaranteed lower bound is computed on the coarse mesh $\mathcal{T}_{H,1}$ yielding
$\ula{4} \approx 73.9444$.
Figure~\ref{fig:Conv} (top) illustrates the convergence of the error quantities ${\tt Err}_{\lambda}$, ${\tt Err}_{H^1}$, and ${\tt Err}_{L^2} $ as well as the corresponding upper bounds $\eta^2$, $\eta$, $\etaltwo$, whereas Table~\ref{tab:size2} (top) reports the effectivity indices.

\begin{figure}[t]
	\centering
   	\def\svgwidth{0.5\textwidth}
    \input{Square.pdf_tex}
	\hspace{0cm}
   	\def\svgwidth{0.5\textwidth}
    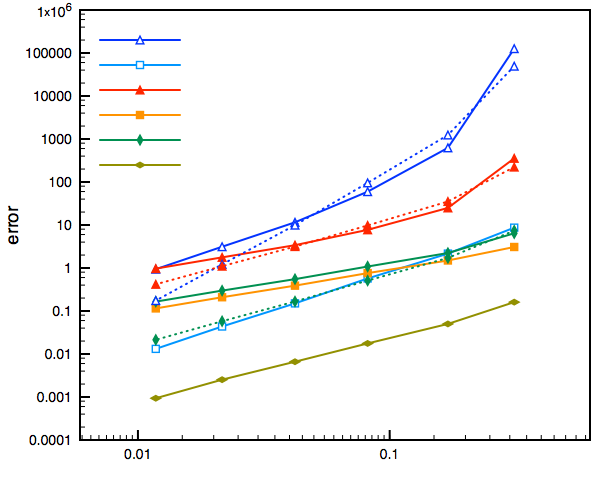
	\caption{Convergence of various measures of the error and their upper bounds for finite elements and the unit square with $m=2$, $M=3$ (top) and for the L-shaped domain with $m=3$, $M=5$ (bottom). 
	{
	The dotted lines provide the estimators using Case II with 
	$\delta=\frac23$ and the empirical choice $C_{\rm S}=C_{\rm I}=1$. }
	}
	\label{fig:Conv}
\end{figure}

\begin{table}[t]
	\footnotesize
	\setlength{\tabcolsep}{4pt}
	\centering
		\vspace{5pt}
      \hspace*{-1cm}\begin{tabular}{lrrrrrrrrrrrr}
		\hline
		\multicolumn{1}{c}{} &
		\multicolumn{1}{c}{$N$} &
		\multicolumn{1}{c}{$h$} &
		\multicolumn{1}{c}{{\tt ndof}} &
		\multicolumn{1}{c}{${\tt Err}_{\lambda}$} &
		\multicolumn{1}{c}{$\eta^2$} &
		\multicolumn{1}{c}{$I^{{\tt eff}}_{\lambda}$} &
		\multicolumn{1}{c}{${\tt Err}_{H^1}$} &
		\multicolumn{1}{c}{$\eta$} &
		\multicolumn{1}{c}{$I^{{\tt eff}}_{H^1}$} &
		\multicolumn{1}{c}{${\tt Err}_{L^2}$} &
		\multicolumn{1}{c}{$\etaltwo$} &
		\multicolumn{1}{c}{$I^{{\tt eff}}_{L^2}$}
		\Ttab\Btab
	 	\\
		\hline
$m=2$ 						&	40	 & 0.0354	 & 1681	 & 0.3351	 & 0.4661	 & 1.39	 & 0.5788	 & 0.6827	 & 1.18	 & 0.0041	 & 0.0183	 & 4.49 \\
$M=3$ 						&	80	 & 0.0177	 & 6561	 & 0.0837	 & 0.0972	 & 1.16	 & 0.2890	 & 0.3118	 & 1.08	 & 0.0010	 & 0.0046	 & 4.47 \\
$\mathcal T_{H,1}$ 	&	160	 & 0.0088	 & 25921	 & 0.0209	 & 0.0231	 & 1.10	 & 0.1445	 & 0.1521	 & 1.05	 & 0.0003	 & 0.0011	 & 4.49 \\
								&	320	 & 0.0044	 & 103041	 & 0.0052	 & 0.0057	 & 1.09	 & 0.0722	 & 0.0755	 & 1.05	 & 0.0001	 & 0.0003	 & 4.62 \\
		\hline
$m=9$ 						&	40	 & 0.0354	 & 1681	 & 3.2698	 & 3714.3421	 & 1135.96	 & 1.8235	 & 60.9454	 & 33.42	 & 0.0194	 & 0.3295	 & 17.01 \\
$M=10$					&	80	 & 0.0177	 & 6561	 & 0.8151	 & 76.6523	 & 94.04	 & 0.9037	 & 8.7551	 & 9.69	 & 0.0049	 & 0.0622	 & 12.81 \\
$\mathcal T_{H,2}$	& 160	 & 0.0088	 & 25921	 & 0.2036	 & 4.0755	 & 20.02	 & 0.4508	 & 2.0188	 & 4.48	 & 0.0012	 & 0.0148	 & 12.17 \\
								&	320	 & 0.0044	 & 103041	 & 0.0509	 & 0.2842	 & 5.58	 & 0.2253	 & 0.5331	 & 2.37	 & 0.0003	 & 0.0036	 & 12.03 \\
		\hline
$m=18$					&	40	 & 0.0354	 & 1681	 & 10.6565	 & 10777.4005	 & 1011.34	 & 3.4872	 & 103.8143	 & 29.77	 & 0.0729	 & 0.5069	 & 6.95 \\
$M=19$					&	80	 & 0.0177	 & 6561	 & 2.6465	 & 166.0018	 & 62.73	 & 1.6537	 & 12.8842	 & 7.79	 & 0.0183	 & 0.0887	 & 4.86 \\
$\mathcal T_{H,2}$	& 160	 & 0.0088	 & 25921	 & 0.6605	 & 8.7166	 & 13.20	 & 0.8152	 & 2.9524	 & 3.62	 & 0.0046	 & 0.0209	 & 4.57 \\
								&	320	 & 0.0044	 & 103041	 & 0.1651	 & 0.6511	 & 3.94	 & 0.4061	 & 0.8069	 & 1.99	 & 0.0011	 & 0.0051	 & 4.50 \\
		\hline
	\end{tabular}
	\caption{[Finite elements, unit square, Case~II]
	Errors, estimates, and effectivity indices for clusters of size 2 and increasing index of the eigenvalues.
	The values of $m$ and $M$ are indicated on the far left as well as the type of coarse mesh $\mathcal T_{H,i}$ used to obtain the auxiliary guaranteed lower bounds $\ula{\M+1}$.}
  \label{tab:size2}
\end{table}

We next analyze the effectivity indices of the estimator as we increase the index of the eigenvalues, still considering clusters of size 2. Table~\ref{tab:size2} (bottom) compares the results for the clusters corresponding to $\m=9,M=10$ and $m=18,M=19$.
Since higher eigenvalues are sought for, we considered the second coarse mesh $\mathcal T_{H,2}$ for computing $\ula{\M+1}$. This yields
\begin{align*}
	&
	\ula{11} \approx 	171.135, \quad
	\ula{20} \approx 	295.777.
\end{align*}
The results confirm the theoretical findings of Section~\ref{sec_FE_fin}. In particular, all the bounds are guaranteed, with effectivity indices taking values above one. Moreover, numerically, we observe asymptotic exactness of the estimators $\eta^2$ and $\eta$ of ${\tt Err}_{\lambda}$, respectively ${\tt Err}_{H^1}$, meaning that the corresponding effectivity indices $I^{{\tt eff}}_{\lambda}$ and $I^{{\tt eff}}_{H^1}$ tend to the optimal value of one. Additionally, we also numerically observe that the effectivity indices are robust with respect to increasing indices of the cluster of fixed size, which we could not cover in our theory. Indeed, for the efficiency bound~\eqref{eq_eigvec_eff} of Theorem~\ref{thm_bound_eigvecs}, the (exploding) factor $\chb$ appears.

Next, we consider clusters of increasing size. We consider the choices $m=1,M=4$ resp. $m=1,M=8$ and present the results in Table~\ref{tab:square_inc_size}.
We observe that the effectivity indices are also numerically robust when doubling the size of the cluster.
\begin{table}[t]
	\footnotesize
	\setlength{\tabcolsep}{3pt}
	\centering
		\vspace{5pt}
      \begin{tabular}{lrrrrrrrrrrrr}
		\hline
		\multicolumn{1}{c}{} &
		\multicolumn{1}{c}{$N$} &
		\multicolumn{1}{c}{$h$} &
		\multicolumn{1}{c}{{\tt ndof}} &
		\multicolumn{1}{c}{${\tt Err}_{\lambda}$} &
		\multicolumn{1}{c}{$\eta^2$} &
		\multicolumn{1}{c}{$I^{{\tt eff}}_{\lambda}$} &
		\multicolumn{1}{c}{${\tt Err}_{H^1}$} &
		\multicolumn{1}{c}{$\eta$} &
		\multicolumn{1}{c}{$I^{{\tt eff}}_{H^1}$} &
		\multicolumn{1}{c}{${\tt Err}_{L^2}$} &
		\multicolumn{1}{c}{$\etaltwo$} &
		\multicolumn{1}{c}{$I^{{\tt eff}}_{L^2}$}
		\Ttab\Btab
	 	\\
		\hline
$m=1$ 						&	10	 & 0.1414	 & 121	 & 13.5049	 & 21673.5051	 & 1604.86	 & 4.1325	 & 147.2192	 & 35.63	 & 0.2141	 & 1.7415	 & 8.13 \\
$M=4$ 						&	20	 & 0.0707	 & 441	 & 3.4018	 & 98.8430	 & 29.06	 & 1.9076	 & 9.9420	 & 5.21	 & 0.0554	 & 0.2274	 & 4.10 \\
$\mathcal T_{H,1}$ 	&	40	 & 0.0354	 & 1681	 & 0.8519	 & 5.0687	 & 5.95	 & 0.9297	 & 2.2514	 & 2.42	 & 0.0139	 & 0.0521	 & 3.75 \\
								&	80	 & 0.0177	 & 6561	 & 0.2131	 & 0.4708	 & 2.21	 & 0.4619	 & 0.6862	 & 1.49	 & 0.0035	 & 0.0128	 & 3.67 \\
								&	160	 & 0.0088	 & 25921	 & 0.0533	 & 0.0728	 & 1.37	 & 0.2306	 & 0.2698	 & 1.17	 & 0.0009	 & 0.0032	 & 3.67 \\
								&	320	 & 0.0044	 & 103041	 & 0.0133	 & 0.0155	 & 1.16	 & 0.1152	 & 0.1243	 & 1.08	 & 0.0002	 & 0.0008	 & 3.71 \\
		\hline
$m=1$ 						&	10	 & 0.1414	 & 121	 & 72.9222	 & 82403.2050	 & 1130.02	 & 9.3347	 & 287.0596	 & 30.75	 & 0.3359	 & 3.2521	 & 9.68 \\
$M=8$ 						&	20	 & 0.0707	 & 441	 & 18.0492	 & 281.4040	 & 15.59	 & 4.3588	 & 16.7751	 & 3.85	 & 0.0874	 & 0.3923	 & 4.49 \\
$\mathcal T_{H,2}$ 	&	40	 & 0.0354	 & 1681	 & 4.4994	 & 15.9735	 & 3.55	 & 2.1323	 & 3.9967	 & 1.87	 & 0.0221	 & 0.0893	 & 4.04 \\
								&	80	 & 0.0177	 & 6561	 & 1.1240	 & 1.8566	 & 1.65	 & 1.0603	 & 1.3626	 & 1.29	 & 0.0055	 & 0.0219	 & 3.94 \\
								&	160	 & 0.0088	 & 25921	 & 0.2810	 & 0.3445	 & 1.23	 & 0.5294	 & 0.5869	 & 1.11	 & 0.0014	 & 0.0054	 & 3.94 \\
								&	320	 & 0.0044	 & 103041	 & 0.0702	 & 0.0788	 & 1.12	 & 0.2646	 & 0.2808	 & 1.06	 & 0.0003	 & 0.0014	 & 4.00 \\
		\hline
	\end{tabular}
	\caption{[Finite elements, unit square, Case~II]
	Errors, estimates, and effectivity indices for clusters of increasing size.
	The values of $m$ and $M$ are indicated on the far left as well as the type of coarse mesh $\mathcal T_{H,i}$ used to obtain the auxiliary guaranteed lower bounds $\ula{\M+1}$.}
	\label{tab:square_inc_size}
\end{table}

\subsubsection{L-shaped domain}
We now address the case of an L-shaped domain $\Om \eq (-1,1)^2 \, \setminus \,
([0,1] \times [-1,0])$.
Note that in this setting, only {\bf Case I}, to our knowledge, is currently applicable in Theorems~\ref{thm_bound_eigs} and~\ref{thm_bound_eigvecs} to obtain guaranteed bounds (cf. Remark~\ref{rem_C_I_C_st}).
The first few eigenvalues are known to high accuracy~\cite{Tref_Betc_comp_eigs_plan_06}
\begin{align*}
		\la{1} \approx 9.6397238, \qquad
		\la{2} \approx 15.197252, \qquad
		\la{3} \approx 19.739209, \\
		\la{4} \approx 29.521481, \qquad
		\la{5} \approx 31.912636, \qquad
		\la{6} \approx 41.474510.
\end{align*}
We focus on the cluster from the third ($m=3$) to the fifth ($M=5$) eigenvalues.
A sequence of non-structured quasi-uniform meshes is considered first.
We test the estimator for the lower bounds of $\la{6}$ that are computed on two different coarse meshes $\mathcal T_{H,1}$ and $\mathcal T_{H,1}$, with 105 triangles resulting in 272 degrees of freedom resp. with 372 triangles resulting in 1033 degrees of freedom.
This yields a lower bound $\ula{6}\approx 34.0774$ resp. $ \ula{6}\approx 39.1209$.
The convergence plots are reported in Figure~\ref{fig:Conv} (bottom) for the latter case, and Table~\ref{tab:Lshaped} presents the effectivity indices in both cases.
We remark that the bound $\etaltwo$ for $\norm{\gam - \gamh}_\HS$ is guaranteed but of a much worse quality in this case, as the effectivity index $I^{{\tt eff}}_{L^2}$ increases with the number of degrees of freedom.

While Case I is always applicable and thus justified, Case II also applies theoretically with $\delta=\frac23$ for the L-shaped domain. 
The limiting issue is that the constants $C_{\rm S}$ and $C_I$ are unknown so that any empirical choice of these constants yields an error indicator but not a guaranteed estimator.
We have tested this indicator in Case II with  $\delta=\frac23$ and $C_{\rm S}=C_{\rm I}=1$ in order to obtain indicators that are no longer guaranteed but, on the other hand, have asymptotically the multiplicative pre-factor equal to 1, see also Remark~\ref{rem_C_I_C_st}.
Table~\ref{tab:Lshaped2} presents the effectivity indices that are now all decreasing (including the one of $\etaltwo$) and the dotted lines in Figure~\ref{fig:Conv} (bottom) present this indicator in Case II.
\color{black}

\begin{table}[t]
	\footnotesize
	\setlength{\tabcolsep}{3pt}
	\centering
		\vspace{5pt}
      \begin{tabular}{lrrrrrrrrrrrr}
		\hline
		\multicolumn{1}{c}{} &
		\multicolumn{1}{c}{$N$} &
		\multicolumn{1}{c}{$h$} &
		\multicolumn{1}{c}{{\tt ndof}} &
		\multicolumn{1}{c}{${\tt Err}_{\lambda}$} &
		\multicolumn{1}{c}{$\eta^2$} &
		\multicolumn{1}{c}{$I^{{\tt eff}}_{\lambda}$} &
		\multicolumn{1}{c}{${\tt Err}_{H^1}$} &
		\multicolumn{1}{c}{$\eta$} &
		\multicolumn{1}{c}{$I^{{\tt eff}}_{H^1}$} &
		\multicolumn{1}{c}{${\tt Err}_{L^2}$} &
		\multicolumn{1}{c}{$\etaltwo$} &
		\multicolumn{1}{c}{$I^{{\tt eff}}_{L^2}$}
		\Ttab\Btab
	 	\\
		\hline
$m=3$ 						&	20	 & 0.1703	 & 372	 & 2.1603	 & 320733.4214	 & 148468.40	 & 1.4948	 & 566.3333	 & 378.87	 & 0.0500	 & 5.1000	 & 101.92 \\
$M=5$ 						&	40	 & 0.0817	 & 1426	 & 0.5710	 & 3020.5208	 & 5289.65	 & 0.7607	 & 54.9593	 & 72.25	 & 0.0176	 & 2.0122	 & 114.26 \\
$\mathcal T_{H,1}$ 	&	80	 & 0.0421	 & 5734	 & 0.1503	 & 211.0547	 & 1403.82	 & 0.3886	 & 14.5277	 & 37.39	 & 0.0066	 & 0.9843	 & 148.78 \\
								&	160	 & 0.0216	 & 22001	 & 0.0436	 & 35.1498	 & 806.13	 & 0.2089	 & 5.9287	 & 28.38	 & 0.0025	 & 0.5277	 & 208.68 \\
								&	320	 & 0.0118	 & 86787	 & 0.0132	 & 8.7007	 & 661.24	 & 0.1149	 & 2.9497	 & 25.68	 & 0.0009	 & 0.2917	 & 311.83 \\
		\hline
$m=3$ 						&	10	 & 0.3124	 & 105	 & 8.6772	 & 126111.0898	 & 14533.55	 & 3.0801	 & 355.1212	 & 115.30	 & 0.1608	 & 6.4197	 & 39.93 \\
$M=5$ 						&	20	 & 0.1703	 & 372	 & 2.1603	 & 622.3367	 & 288.08	 & 1.4948	 & 24.9467	 & 16.69	 & 0.0500	 & 2.2311	 & 44.59 \\
$\mathcal T_{H,2}$ 	&	40	 & 0.0817	 & 1426	 & 0.5710	 & 59.5714	 & 104.32	 & 0.7607	 & 7.7182	 & 10.15	 & 0.0176	 & 1.0820	 & 61.44 \\
								&	80	 & 0.0421	 & 5734	 & 0.1503	 & 11.5424	 & 76.77	 & 0.3886	 & 3.3974	 & 8.74	 & 0.0066	 & 0.5505	 & 83.21 \\
								&	160	 & 0.0216	 & 22001	 & 0.0436	 & 3.1223	 & 71.61	 & 0.2089	 & 1.7670	 & 8.46	 & 0.0025	 & 0.2980	 & 117.86 \\
								&	320	 & 0.0118	 & 86787	 & 0.0132	 & 0.9370	 & 71.21	 & 0.1149	 & 0.9680	 & 8.43	 & 0.0009	 & 0.1652	 & 176.63 \\
		\hline
	\end{tabular}
	\caption{[Finite elements, L-shaped domain, Case~I]
	Errors, estimates, and effectivity indices for the cluster with $m=3$, $M=5$.
	The type of the coarse mesh $\mathcal T_{H,i}$ used to obtain the auxiliary guaranteed lower bounds $\ula{\M+1}$ is indicated on the far left.}
	\label{tab:Lshaped}
\end{table}

\begin{table}[t]
	\footnotesize
	\setlength{\tabcolsep}{3pt}
	\centering
		\vspace{5pt}
      \begin{tabular}{lrrrrrrrrrrrr}
		\hline
		\multicolumn{1}{c}{} &
		\multicolumn{1}{c}{$N$} &
		\multicolumn{1}{c}{$h$} &
		\multicolumn{1}{c}{{\tt ndof}} &
		\multicolumn{1}{c}{${\tt Err}_{\lambda}$} &
		\multicolumn{1}{c}{$\eta^2$} &
		\multicolumn{1}{c}{$I^{{\tt eff}}_{\lambda}$} &
		\multicolumn{1}{c}{${\tt Err}_{H^1}$} &
		\multicolumn{1}{c}{$\eta$} &
		\multicolumn{1}{c}{$I^{{\tt eff}}_{H^1}$} &
		\multicolumn{1}{c}{${\tt Err}_{L^2}$} &
		\multicolumn{1}{c}{$\etaltwo$} &
		\multicolumn{1}{c}{$I^{{\tt eff}}_{L^2}$}
		\Ttab\Btab
	 	\\
		\hline
$m=3$ 						&	20	 & 0.1703	 & 372	 & 2.1603	 & 29382.3983	 & 13601.19	 & 1.4948	 & 171.4129	 & 114.67	 & 0.0500	 & 3.7863	 & 75.67 \\
$M=5$ 						&	40	 & 0.0817	 & 1426	 & 0.5710	 & 987.9690	 & 1730.17	 & 0.7607	 & 31.4320	 & 41.32	 & 0.0176	 & 0.9157	 & 52.00 \\
$\mathcal T_{H,1}$ 	&	80	 & 0.0421	 & 5734	 & 0.1503	 & 86.9332	 & 578.23	 & 0.3886	 & 9.3238	 & 24.00	 & 0.0066	 & 0.2876	 & 43.47 \\
								&	160	 & 0.0216	 & 22001	 & 0.0436	 & 10.0040	 & 229.43	 & 0.2089	 & 3.1629	 & 15.14	 & 0.0025	 & 0.0989	 & 39.10 \\
								&	320	 & 0.0118	 & 86787	 & 0.0132	 & 1.3585	 & 103.25	 & 0.1149	 & 1.1656	 & 10.15	 & 0.0009	 & 0.0365	 & 38.99 \\
		\hline
$m=3$ 						&	10	 & 0.3124	 & 105	 & 8.6772	 & 49345.4041	 & 5686.76	 & 3.0801	 & 222.1383	 & 72.12	 & 0.1608	 & 7.3922	 & 45.98 \\
$M=5$ 						&  20	 & 0.1703	 & 372	 & 2.1603	 & 1237.8702	 & 573.01	 & 1.4948	 & 35.1834	 & 23.54	 & 0.0500	 & 1.7146	 & 34.27 \\
$\mathcal T_{H,2}$ 	&	40	 & 0.0817	 & 1426	 & 0.5710	 & 95.4292	 & 167.12	 & 0.7607	 & 9.7688	 & 12.84	 & 0.0176	 & 0.5097	 & 28.94 \\
								&	80	 & 0.0421	 & 5734	 & 0.1503	 & 9.9167	 & 65.96	 & 0.3886	 & 3.1491	 & 8.10	 & 0.0066	 & 0.1665	 & 25.17 \\
								&	160	 & 0.0216	 & 22001	 & 0.0436	 & 1.2135	 & 27.83	 & 0.2089	 & 1.1016	 & 5.27	 & 0.0025	 & 0.0578	 & 22.86 \\
								&	320	 & 0.0118	 & 86787	 & 0.0132	 & 0.1744	 & 13.25	 & 0.1149	 & 0.4176	 & 3.64	 & 0.0009	 & 0.0214	 & 22.86 \\
		\hline
	\end{tabular}
	\caption{[Finite elements, L-shaped domain, Case~II (with the empirical choice $C_{\rm S}=C_{\rm I}=1$)]
	Errors, estimates, and effectivity indices for the cluster with $m=3$, $M=5$.
	The type of the coarse mesh $\mathcal T_{H,i}$ used to obtain the auxiliary guaranteed lower bounds $\ula{\M+1}$ is indicated on the far left.}
	\label{tab:Lshaped2}
\end{table}

We finally test an adaptive refinement strategy using the local character of the density matrix estimator~\eqref{eq:GlobInd} (Case~I)
\[
	\eta^2 = \sum_{K \in \mathcal {T}_h} \eta_K^2
\]
with
\[
	\eta_K^2
	=
	(2 \chn^2+ 2\lah{\M} \chtn^4 \eres^2  ) \sum_{i=\m}^\M \norm{\Gr \uih + \frh{i}}^2_K.
\]
We employ the D\"orfler marking strategy~\cite{Dorf_cvg_FE_96} with $\theta=0.6$ and the newest vertex bisection mesh refinement.
The initial mesh is unstructured with 103 degrees of freedom.
The same lower bounds (using the mesh $\mathcal T_{H,2}$) as for the uniform refinement have been used and we note that Assumption~\ref{as:gap_cont_disc} is satisfied for the initial mesh. 

Figure~\ref{fig:LshapeAdaptivConv} illustrates the error in the eigenvalues and density matrix as well as their upper bounds (left) and the mesh at the 10th iteration of the adaptive mesh refinement procedure (right).
The optimal convergence rates are indicated by dashed lines. 
Table~\ref{tab:eff_adaptive_Lshaped} then presents more details including effectivity indices.
We observe quasi-optimal convergence with respect to the number of unknowns on the generated sequence of meshes and a loss of effectivity of $\etaltwo$ as in the case of uniform refinement.
\color{black}

\begin{table}[t]
	\footnotesize
	\setlength{\tabcolsep}{4pt}
	\centering
		\vspace{5pt}
	\begin{tabular}{rrrrrrrrrrrr}	
		\hline
		\multicolumn{1}{c}{Level} &
		\multicolumn{1}{c}{$h$} &
		\multicolumn{1}{c}{{\tt ndof}} &
		\multicolumn{1}{c}{${\tt Err}_{\lambda}$} &
		\multicolumn{1}{c}{$\eta^2$} &
		\multicolumn{1}{c}{$I^{{\tt eff}}_{\lambda}$} &
		\multicolumn{1}{c}{${\tt Err}_{H^1}$} &
		\multicolumn{1}{c}{$\eta$} &
		\multicolumn{1}{c}{$I^{{\tt eff}}_{H^1}$} &
		\multicolumn{1}{c}{${\tt Err}_{L^2}$} &
		\multicolumn{1}{c}{$\etaltwo$} &
		\multicolumn{1}{c}{$I^{{\tt eff}}_{L^2}$}
		\Ttab\Btab
	 	\\
		\hline
4	 & 0.2687	 & 311	 & 2.6872	 & 971.5901	 & 361.57	 & 1.6678	 & 31.1703	 & 18.69	 & 0.0569	 & 2.5161	 & 44.20 \\
8	 & 0.2000	 & 1503	 & 0.5247	 & 50.8697	 & 96.95	 & 0.7264	 & 7.1323	 & 9.82	 & 0.0114	 & 1.0228	 & 89.74 \\
12	 & 0.1000	 & 6944	 & 0.1086	 & 7.6657	 & 70.62	 & 0.3299	 & 2.7687	 & 8.39	 & 0.0024	 & 0.4568	 & 187.50 \\
16	 & 0.0500	 & 31608	 & 0.0233	 & 1.5273	 & 65.69	 & 0.1548	 & 1.2358	 & 7.98	 & 0.0005	 & 0.2103	 & 394.88 \\
		\hline
	\end{tabular}
	\caption{[Adaptive mesh refinement, L-shaped domain, Case~I] 
}
	\label{tab:eff_adaptive_Lshaped}
\end{table}

\begin{figure}[t]
	\centering
   	\def\svgwidth{0.5\textwidth}
    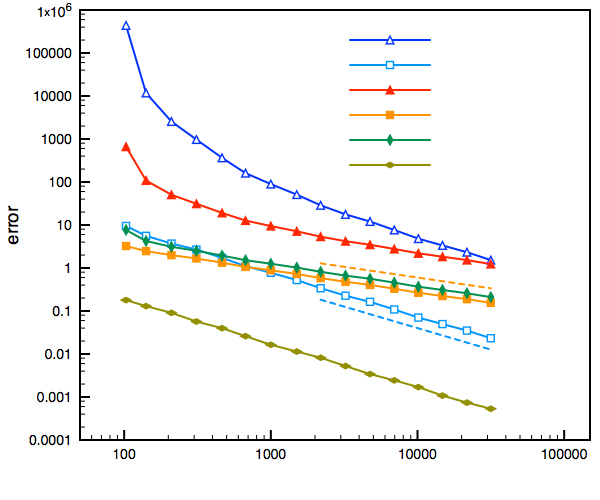
	\hspace{0cm}
   	\def\svgwidth{0.4\textwidth}
   	\setlength{\unitlength}{\svgwidth}
    \includegraphics[trim = 30mm -15mm 35mm 10mm, clip,width=\unitlength]{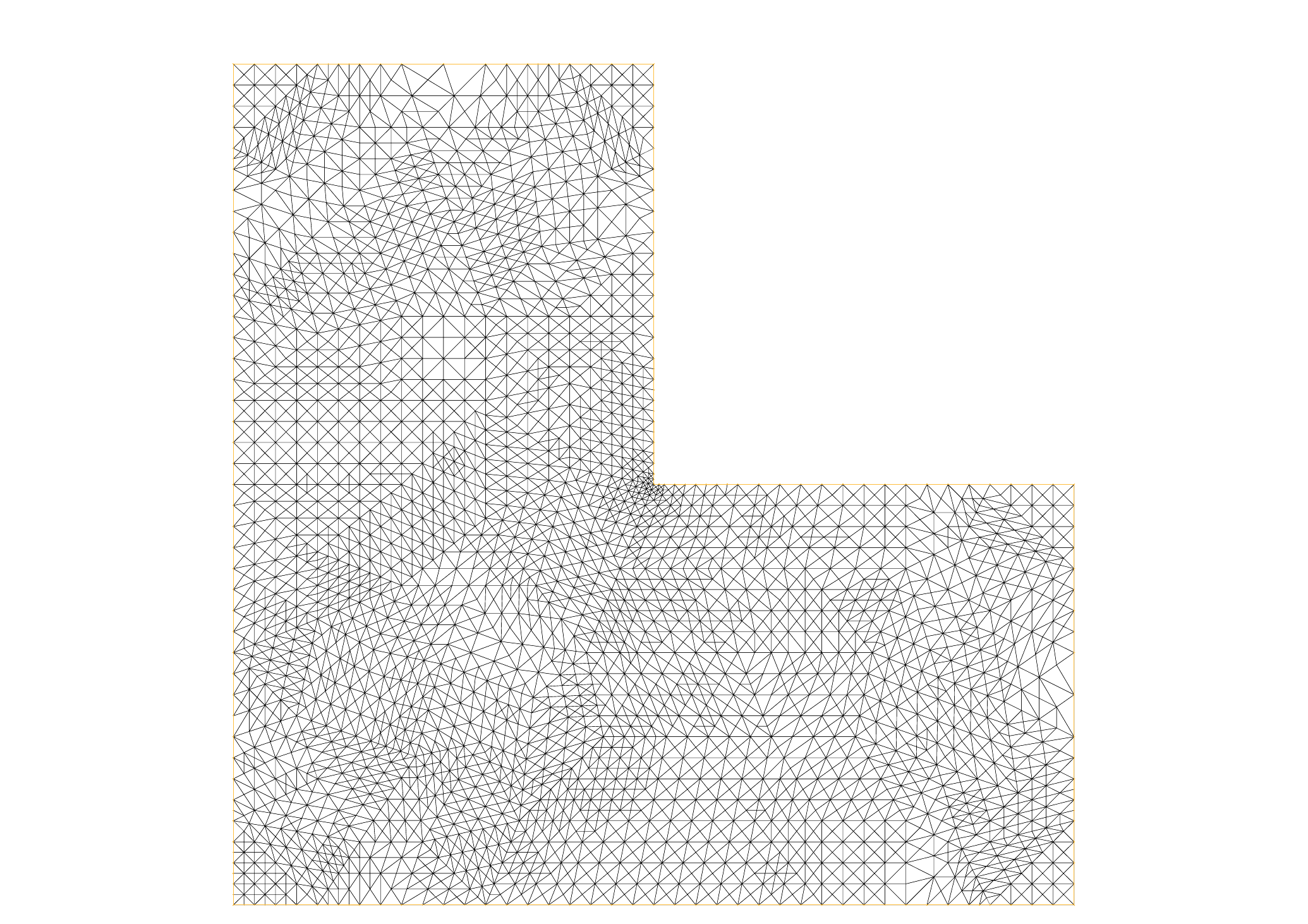}
	\caption{
	Convergence of various measures of the error and their upper bounds for finite elements on the L-shaped domain with $m=3$, $M=5$ using an adaptive refinement strategy (left).
	The mesh at the 10th iteration (right).
	}
	\label{fig:LshapeAdaptivConv}
\end{figure}

%%%%%%%%%%%%%%%%%%%%%%%%%%%%%%%%%%%%%%

\subsection{A Schr\"odinger operator discretized with planewaves}

We consider in this section a Schr\"odinger operator of the form $-\Delta+V$ on $L^2_\#((0,2\pi)^d)$, where $V \in L^\infty_\#((0,2\pi)^d)$, $V \ge 1$, $d=1,2$. The problem is discretized with planewaves, and falls into the setting presented in Section~\ref{sec:PW_theory}. Using the notation of Section~\ref{sec:PW_theory}, $L=2\pi$. The potential $V$ is defined by its Fourier coefficients $\hat{V}_\kk$, $\kk\in \Z^d$,
which are of the form
\begin{equation}\label{eq:Fourier_coeff}
   \forall \kk\in \Z^d\backslash\{0\}, \quad \hat{V}_\kk = \frac{\alpha}{|\kk|^2},
\end{equation}
with $\alpha>0$ given and $\hat{V}_0$ such that $\min_{\xx\in (0,2\pi)^d} V(\xx) = 1$.

For the implementation of the bounds, note that the eigenvalues of the operator $-\Delta+1$, which are explicitly known, are lower bounds for the eigenvalues of $-\Delta+V$, since $V\ge 1$.
Moreover, the eigenvalues computed in the basis with planewave cutoff $N$ are upper bounds of the exact eigenvalues, due to the variational principle. The constant $c_N$ defined in~\eqref{eq_cin} can therefore be computed with these bounds of the eigenvalues.

The notation used here is similar to the notation of Section~\ref{sec:num_lap}, see~\eqref{eq:notations}. In particular, the effectivity indices are defined by
\[
	I^{{\tt eff}}_{\lambda} \eq \frac{\eta^2}{{\tt Err}_{\lambda}},
	\qquad
	I^{{\tt eff}}_{H^1} \eq \frac{\eta}{{\tt Err}_{H^1}},
	\qquad
	I^{{\tt eff}}_{L^2} \eq \frac{\etaltwo}{{\tt Err}_{L^2}},
\]
where $\eta$ and $\etaltwo$ are resp. defined in~\eqref{eq_eig_down_PW} and~\eqref{eq:gam_err_PW}.

\subsubsection{One-dimensional simulations}

In the following simulations, we take $d=1$ and $\alpha=1$.
For this potential, we compute reference eigenvectors and eigenvalues taking $N=600$. We then compute approximate eigenvectors for different values of the discretization parameter $N$ varying from 10 to 130. For all the chosen eigenvalue clusters, the assumptions required for Theorems~\ref{thm_bound_eigs_PW} and~\ref{thm_bound_eigvecs_PW} are already satisfied for $N=10$.

We first assess the quality of the estimators for $m=2, M=3$.
Figure~\ref{fig:Conv_PW} illustrates the convergence of the error quantities $	{\tt Err}_{\lambda}$, ${\tt Err}_{H^1}$, and ${\tt Err}_{L^2} $ as well as the corresponding upper bounds $\eta^2$, $\eta$, $\etaltwo$ and Table~\ref{tab:size3} (top) reports the corresponding effectivity indices.
We observe that the estimators $\eta^2$ and $\eta$ are numerically asymptotically exact.

We then consider clusters of increasing indices and increasing size. Namely, we take $m=10$, $M=11$ and $m=16$, $M=17$, as well as $m=1$, $M=9$ and $m=1$, $M=17$. The results presented in Table~\ref{tab:size3} confirm excellent efficiency and robustness of the bounds in all the considered situations.

\begin{table}[t]
	\footnotesize
	\setlength{\tabcolsep}{4pt}
	\centering
		\vspace{5pt}
	\begin{tabular}{lrrrrrrrrrrrr}
		\hline
		\multicolumn{1}{c}{} &
		\multicolumn{1}{c}{$N$} &
		\multicolumn{1}{c}{{\tt ndof}} &
		\multicolumn{1}{c}{${\tt Err}_{\lambda}$} &
		\multicolumn{1}{c}{$\eta^2$} &
		\multicolumn{1}{c}{$I^{{\tt eff}}_{\lambda}$} &
		\multicolumn{1}{c}{${\tt Err}_{H^1}$} &
		\multicolumn{1}{c}{$\eta$} &
		\multicolumn{1}{c}{$I^{{\tt eff}}_{H^1}$} &
		\multicolumn{1}{c}{${\tt Err}_{L^2}$} &
		\multicolumn{1}{c}{$\etaltwo$} &
		\multicolumn{1}{c}{$I^{{\tt eff}}_{L^2}$}
		\Ttab\Btab
	 	\\
		\hline
    $m=2$ & 10 &21 &  2.81e-06&  2.71e-05&  9.65&
            1.72e-03&  5.21e-03&  3.03&
            1.89e-04&  1.68e-03&  8.89 	  \\
    $m=3$ & 50 &101 &  1.18e-09&  1.59e-09&  1.35&
            3.44e-05&  3.99e-05&  1.16&
            8.12e-07&  6.91e-06&  8.50 	  \\
          & 90 &181 &  6.39e-11&  7.08e-11&  1.11&
          8.00e-06&  8.41e-06&  1.05&
          1.06e-07&  8.94e-07&  8.46 	\\
          & 130 & 261 &   1.02e-11&   1.08e-11&   1.05&
          3.20e-06&   3.28e-06&   1.03&
          2.93e-08&   2.48e-07&   8.44 	  	\\
		\hline
    $m=10$  & 10  & 21  &   2.74e-05 &   4.20e-04 &   15.3 &
              6.35e-03 &   2.05e-02 &   3.23 &
              6.83e-04 &   2.68e-03 	&   3.93   \\
    $M=11$  & 50  & 101  &   2.78e-09 &   4.70e-09 &   1.69 &   5.32e-05 &   6.85e-05 &   1.29 &   1.26e-06 &  5.91e-06 	&   4.70 	\\
        & 90  & 181  &   1.44e-10 &   1.75e-10 &   1.21 &   1.20e-05 &   1.32e-05 &   1.10 &   1.59e-07 &   7.48e-07 &	   4.71 \\
       &  130  & 261  &   2.29e-11 &   2.51e-11 &   1.10 &   4.78e-06 &   5.01e-06 &   1.05 &   4.38e-08 &   2.06e-07 	&   4.71 \\
		\hline
    $m=16$ &  10  & 21  &   4.41e-04 &   1.74e-02 &   39.5 &   3.67e-02 &   1.32e-01 &   3.59 &   3.69e-03 &   1.14e-02 	&   3.08 \\
      $M=17$  & 50  & 101  &   3.00e-09 &   1.19e-08 &   3.97 &   5.59e-05 &   1.09e-04 &   1.96 &   1.32e-06 &   8.21e-06 	&   6.22 \\
    & 90  & 181  &   1.43e-10 &   2.76e-10 &   1.93 &   1.20e-05 &   1.66e-05 &   1.38 &   1.59e-07 &   1.00e-06 &	   6.31 	 \\
      & 130 & 261 &   2.24e-11&   3.22e-11&   1.44&   4.73e-06&   5.68e-06&   1.20&   4.34e-08&   2.74e-07 	&   6.32 	\\
    \hline
    $m=1$ &  10  & 21  &   3.88e-05 &   3.82e-04 &   9.83 &   6.80e-03 &   1.95e-02 &   2.88 &   7.43e-04 &   3.04e-03 	&   4.09 	   \\
    $M=9$ & 50&	 101&	 1.02e-08 	& 1.41e-08 	& 1.38 	& 1.01e-04 	& 1.19e-04 	& 1.17 	& 2.40e-06 	& 1.02e-05 	&   4.25 	   \\
       &  90  & 181  &   5.46e-10 &   6.10e-10 &   1.12 &   2.34e-05 &   2.47e-05 &   1.06 &   3.09e-07 &   1.31e-06 &	   4.24 	   \\
      &  130  & 261  &   8.72e-11 &   9.22e-11 &   1.06 &   9.35e-06 &   9.60e-06 &   1.03 &   8.57e-08 &   3.62e-07 	&   4.23 	   \\
      \hline
      $m=1$ &  10  &21  &  3.18e-03 &  1.49e-01 &  46.9 &  1.18e-01 &  3.86e-01 &  3.28 &  1.15e-02 &  1.53e-02 	&   3.53 	   \\
      $M=17$ & 50 &101 &  2.49e-08&  1.36e-07&  5.49&  1.59e-04&  3.69e-04&  2.32&  3.76e-06&  2.22e-05 	&   6.32 	   \\
        &90 &181 &  1.26e-09&  3.02e-09&  2.39&  3.57e-05&  5.50e-05&  1.54&  4.71e-07&  2.81e-06 	&   6.34 	   \\
        &130 &  261 &    2.00e-10&    3.33e-10&    1.67&    1.41e-05&    1.82e-05&    1.29&    1.30e-07&    7.74e-07 	 &  6.34 	   \\
        \hline
	\end{tabular}
	\caption{[Planewaves, one-dimensional case, Schr\"odinger operator with $\alpha=1$ in~\eqref{eq:Fourier_coeff}]
	Errors, estimates, and effectivity indices for different clusters of eigenvalues.
	The values of $m$ and $M$ are indicated on the far left.}
  \label{tab:size3}
\end{table}

\begin{figure}
	\centering
\def\svgwidth{0.70\textwidth}
    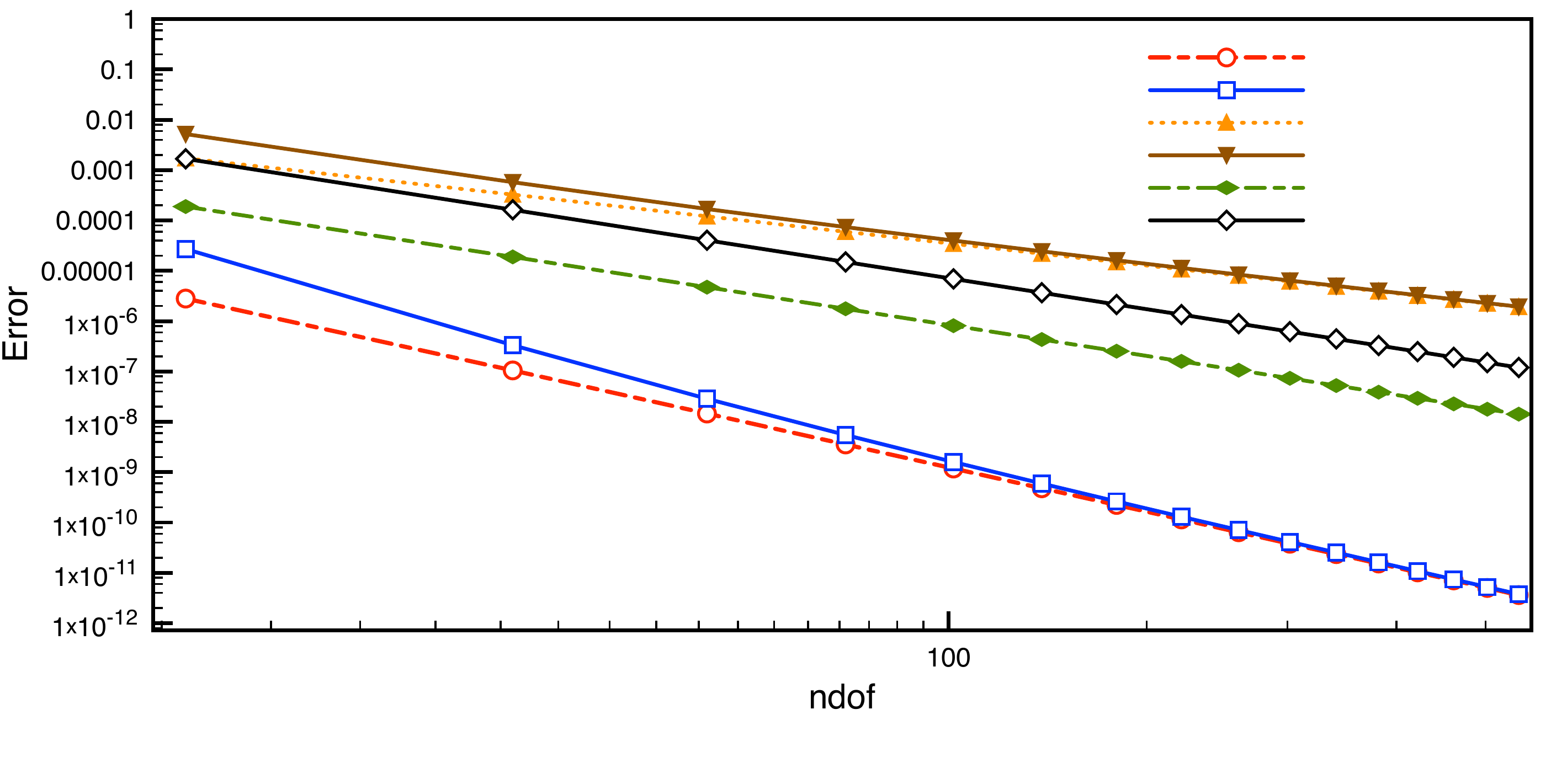
	\caption{Convergence of the errors and their upper bounds for a 1D Schr\"odinger operator with periodic boundary conditions with $m=2$, $M=3$.}
	\label{fig:Conv_PW}
\end{figure}

\subsubsection{Two-dimensional simulations}

We now take $d=2$ and first use $\alpha = 0.1$. For this potential, we compute reference eigenvectors and eigenvalues taking $N=50$, the number of degrees of freedom being $(2N+1)^2$. We then compute approximate eigenvectors and eigenvalues for different $N$ varying from 5 to 25. We compute the error bounds as well as the effectivity indices for different clusters of eigenvalues, namely $\m = 1$, $\M = 5$, then $\m = 6$, $\M = 9$, and finally $\m = 10$, $\M = 13$. The eigenvalue clusters are chosen such that the gaps between the cluster and the surrounding eigenvalues are rather large, in practice $> 0.87$. The results, presented in Table~\ref{tab:pw_d2_1}, confirm excellent accuracy of the bounds in this case as well.

\begin{table}[t]
	\footnotesize
	\setlength{\tabcolsep}{4pt}
	\centering
		\vspace{5pt}
	\begin{tabular}{lrrrrrrrrrrrr}
		\hline
		\multicolumn{1}{c}{} &
		\multicolumn{1}{c}{$N$} &
		\multicolumn{1}{c}{{\tt ndof}} &
		\multicolumn{1}{c}{${\tt Err}_{\lambda}$} &
		\multicolumn{1}{c}{$\eta^2$} &
		\multicolumn{1}{c}{$I^{{\tt eff}}_{\lambda}$} &
		\multicolumn{1}{c}{${\tt Err}_{H^1}$} &
		\multicolumn{1}{c}{$\eta$} &
		\multicolumn{1}{c}{$I^{{\tt eff}}_{H^1}$} &
		\multicolumn{1}{c}{${\tt Err}_{L^2}$} &
		\multicolumn{1}{c}{$\etaltwo$} &
		\multicolumn{1}{c}{$I^{{\tt eff}}_{L^2}$}
		\Ttab\Btab
	 	\\
		\hline
    $m=1$ & 5 & 121 &   2.62e-05&   2.02e-04&   7.70&   5.32e-03&   1.42e-02&   2.67&   9.94e-04&   6.18e-03 	&   6.22 	   \\
    $M=5$ & 15 & 961 &   4.12e-07&   7.31e-07&   1.77&   6.45e-04&   8.55e-04&   1.32&   4.47e-05&   2.62e-04 	&   5.85 	   \\
    & 25 & 2601 &   5.32e-08&   7.22e-08&   1.36&   2.31e-04&   2.69e-04&   1.16&   9.99e-06&   5.80e-05 	&   5.81	   \\
    \hline
    $m=6$ & 5 & 121 &   5.12e-05&   2.80e-04&   5.47&   7.60e-03&   1.67e-02&   2.20&   1.41e-03&   5.90e-03 	&   4.17	   \\
    $M=9$ &15 &961 &  7.51e-07&  1.15e-06&  1.53&  8.73e-04&  1.07e-03&  1.23&  6.05e-05&  2.43e-04 	&   4.02 	   \\
    & 25 &2601 &  9.63e-08&  1.22e-07&  1.26&  3.11e-04&  3.49e-04&  1.12&  1.35e-05&  5.38e-05 	&   4.00	   \\
   \hline
   $m=10$ & 5 &121 &  3.81e-05&  1.79e-03&  46.9 &  6.83e-03&  4.23e-02&  6.19&  1.28e-03&  1.30e-02 	  & 10.1   \\
    $M=13$ & 15 &961 &  4.47e-07&  2.93e-06&  6.55&  6.77e-04&  1.71e-03&  2.53&  4.69e-05&  4.87e-04 	 &  10.4 	   \\
   & 25 & 2601 &   5.64e-08&   1.80e-07&   3.18&   2.39e-04&   4.24e-04&   1.78&   1.03e-05&   1.07e-04 	&   10.4 	   \\
   \hline
	\end{tabular}
	\caption{[Planewaves, two-dimensional case, Schr\"odinger operator with $\alpha = 0.1$ in~\eqref{eq:Fourier_coeff}]
	Errors, estimates, and effectivity indices for different clusters of eigenvalues.
	The values of $m$ and $M$ are indicated on the far left.}
  \label{tab:pw_d2_1}
\end{table}

We, however, note that the parameter $\alpha$, which determines the amplitude of the potential, has a large influence on the efficiency of the bounds. In table~\ref{tab:pw_d2_2}, we present the error bounds and the effectivity indices in the setting $\alpha = 0.5$ for two clusters $\m = 6$, $\M = 9$ and  $\m = 10$, $\M = 13$. The efficiency is here reduced by one order of magnitude, though the assumptions required for the bounds to be valid are still satisfied from $N=5$ onwards.

\begin{table}[t]
	\footnotesize
	\setlength{\tabcolsep}{4pt}
	\centering
		\vspace{5pt}
	\begin{tabular}{lrrrrrrrrrrrr}
		\hline
		\multicolumn{1}{c}{} &
		\multicolumn{1}{c}{$N$} &
		\multicolumn{1}{c}{{\tt ndof}} &
		\multicolumn{1}{c}{${\tt Err}_{\lambda}$} &
		\multicolumn{1}{c}{$\eta^2$} &
		\multicolumn{1}{c}{$I^{{\tt eff}}_{\lambda}$} &
		\multicolumn{1}{c}{${\tt Err}_{H^1}$} &
		\multicolumn{1}{c}{$\eta$} &
		\multicolumn{1}{c}{$I^{{\tt eff}}_{H^1}$} &
		\multicolumn{1}{c}{${\tt Err}_{L^2}$} &
		\multicolumn{1}{c}{$\etaltwo$} &
		\multicolumn{1}{c}{$I^{{\tt eff}}_{L^2}$}
		\Ttab\Btab
	 	\\
		\hline
    $m=6$ &5 &121 &  2.33e-04&  9.87e-02&  424&  1.65e-02&  3.14e-01&  19.0&  2.90e-03&  1.01e-01 	&   35.0 	   \\
    $M=9$ &15 & 961 &   4.23e-06&   2.03e-04&   47.9&   2.08e-03&   1.42e-02&   6.84&   1.42e-04&   4.55e-03 	 &  32.0   \\
    & 25 &2601 &  5.57e-07&  1.05e-05&  18.8&  7.50e-04&  3.24e-03&  4.32&  3.22e-05&  1.02e-03 	&   31.6	   \\
        \hline
        $m=10$ &5 & 121 &   1.02e-04&   1.44e-01&   1410&   1.12e-02&   3.79e-01&   33.9&   2.00e-03&   1.78e-02&   8.89 	   \\
        $M=13$& 15 & 961 &   1.61e-06&   2.58e-04&   161&   1.29e-03&   1.61e-02&   12.5&   8.80e-05&   7.51e-04&   8.53 	   \\
        & 25 &2601 &  2.10e-07&  1.30e-05&  61.6&  4.61e-04&  3.60e-03&  7.81&  1.98e-05&  1.67e-04&  8.44 	   \\
        \hline
	\end{tabular}
	\caption{[Planewaves, two-dimensional case, Schr\"odinger operator with $\alpha = 0.5$ in~\eqref{eq:Fourier_coeff}]
	Errors, estimates, and effectivity indices for different clusters of eigenvalues.
	The values of $m$ and $M$ are indicated on the far left.}
  \label{tab:pw_d2_2}
\end{table}

\section{Conclusion} \label{sec:concl}

In this paper, we have introduced a new framework for error estimation in eigenvalue problems based on the density matrix formalism. This framework allows to deal with clusters of eigenvalues with possible degeneracies or near-degeneracies, as long as there is a gap between the considered eigenvalues and the rest of the spectrum. We propose {\it a posteriori} error estimates that are valid for conforming finite element and planewaves discretizations where in the first case, equilibrated flux reconstruction is used to bound the dual residual norms. The numerical results witness a very good quality of the derived methodology in a large set of test scenarios.

\appendix

\section*{Appendix}

We present the proof of~\eqref{eq:eqH1} from Lemma~\ref{lem:norm_eq2} in this appendix.

\section{Proof of \eqref{eq:eqH1} from Lemma~\ref{lem:norm_eq2}}
\label{app:proof_lemma}

\begin{proof}
To show~\eqref{eq:eqH1}, let us first express $\norm{\A^{1/2}(\Phie-\Phiho)}_{\sHH}^2$ and $\norm{\A^{1/2}(\gam-\gamh)}_{\HS}^2$.
From~\eqref{eq:Agamhgam} and~\eqref{eq:A12Phi}, there holds
\bse\begin{align}
   \norm{\A^{1/2}(\gam-\gamh)}_{\HS}^2
   = & \sum_{i=\m}^\M (\lah{i} - \la{i})
   + 2 \sum_{i=\m}^\M \la{i}
   \| (1-\gamh) \ue{i} \|_\sHH^2 \label{eq:Agamhgam_A}\\   \norm{\A^{1/2}(\Phie-\Phiho)}_{\sHH}^2 = &
   \sum_{i=\m}^\M (\lah{i} - \la{i}) + \sum_{i=\m}^\M \la{i} \norm{\ue{i} - \uho{i}}_\sHH^2. \label{eq:A12Phi_A}
\end{align}\ese
Since the projector $(1-\gamh)$ applied to any approximate eigenvector in the cluster is equal to zero, we obtain
\begin{equation}
   \label{eq:A1/2gam}
   \norm{\A^{1/2}(\gam-\gamh)}_{\HS}^2
 =  \sum_{i=\m}^\M (\lah{i} - \la{i})
 + 2 \sum_{i=\m}^\M \la{i}
 \| (1-\gamh) (\ue{i}-\uho{i}) \|_\sHH^2.
\end{equation}

To show the left inequality in~\eqref{eq:eqH1}, we use the fact that the operator norm of the projector $(1-\gamh)$ in $\mathcal{L}(\HH)$ is equal to 1 and~\eqref{eq:conf_approx} which, together with~\eqref{eq:A1/2gam}, yield
\[
   \norm{\A^{1/2}(\gam-\gamh)}_{\HS}^2 \le
   2 \sum_{i=\m}^\M (\lah{i} - \la{i}) + 2 \sum_{i=\m}^\M \la{i}
   \| \ue{i}-\uho{i} \|_\sHH^2
   = 2 \norm{\A^{1/2}(\Phie-\Phiho)}_{\sHH}^2.
\]

To show the right inequality in~\eqref{eq:eqH1}, we compute the difference
$\norm{\A^{1/2}(\Phie-\Phiho)}_{\sHH}^2 -  \norm{\A^{1/2}(\gam-\gamh)}_{\HS}^2$. Starting from~\eqref{eq:A12Phi_A} and~\eqref{eq:A1/2gam}, decomposing the identity as the sum of two orthogonal projectors $1 = \gamh + (1-\gamh)$, using \eqref{eq:gamma_v} from Lemma~\ref{lem_orth_proj}, we obtain
\begin{align*}
   \norm{\A^{1/2}(\Phie-\Phiho)}_{\sHH}^2 -  \norm{\A^{1/2}(\gam-\gamh)}_{\HS}^2 = {} &
    \sum_{i=\m}^\M \la{i} \norm{\ue{i} - \uho{i}}_\sHH^2 
    \\ & - 2 \sum_{i=\m}^\M \la{i}
   \| (1-\gamh) (\ue{i}-\uho{i}) \|_\sHH^2 \\
   \le {} &  \sum_{i=\m}^\M \la{i} \norm{\ue{i} - \uho{i}}_\sHH^2 
   \\ & -  \sum_{i=\m}^\M \la{i}
   \| (1-\gamh) (\ue{i}-\uho{i}) \|_\sHH^2 \\
   = {} & \sum_{i=\m}^\M \la{i} \norm{\gamh (\ue{i}-\uho{i})}_\sHH^2 \\
   = {} & \sum_{i,j=\m}^\M \la{i} \left| \left(\ue{i}-\uho{i},\uho{j}\right)_\sHH \right|^2.
\end{align*}
Further, applying $(\uho{j},\ue{i}-\uho{i})_\sHH = \frac{1}{2} (\uho{j}-\ue{j},\ue{i}-\uho{i})_\sHH$ which follows as~\eqref{eq:estimL2} and
using the Cauchy--Schwarz inequality together with~\eqref{eq:vecs}, we get
\begin{align*}
   \norm{\A^{1/2}(\Phie-\Phiho)}_{\sHH}^2 -  \norm{\A^{1/2}(\gam-\gamh)}_{\HS}^2 & \le \sum_{i,j=\m}^\M \frac{\la{i}}{4}  \|\ue{i}-\uho{i}\|^2_\sHH  \norm{\ue{j}-\uho{j}}_\sHH^2 \\ &
   \le  \frac{\la{\M}}{4}\norm{\Phie - \Phiho}_\sHH^4.
\end{align*}
Combining with~\eqref{eq:eqL2}, we obtain
\begin{align*}
  \norm{\A^{1/2}(\Phie-\Phiho)}_{\sHH}^2 -  \norm{\A^{1/2}(\gam-\gamh)}_{\HS}^2
  \le \frac{\la{\M}}{4} \norm{\gam - \gamh}_\HS^4.
\end{align*}
Also, using~\eqref{eq_A12ggh_dev} from the proof of Lemma~\ref{lem:2.6} together with $\norm{\ue{i}} = \norm{\uh{i}} = 1$ and using $(\gam)^2 = \gam$, $(\gamh)^2 = \gamh$, together with~\eqref{eq:conf_approx} and~\eqref{eq_Tr_2_scp}, \eqref{eq:Trgam2}  yields
\begin{align*}
  \norm{\A^{1/2}(\gam-\gamh)}_{\HS}^2 = {} & \sum_{i=\m}^\M \la{i}\left(  \ue{i},  \ue{i}   \right)_\sHH
  - 2 \sum_{i=\m}^\M  \la{i}\left(  \ue{i}, \gamh \ue{i}   \right)_\sHH
   +  \sum_{i=\m}^\M \lah{i}\left(  \uh{i}, \uh{i}   \right)_\sHH \nonumber \\
   \ge  {} & \sum_{i=\m}^\M \la{i} \left[ \left(  \ue{i},  \ue{i}   \right)_\sHH
   - 2  \left(  \ue{i}, \gamh \ue{i}   \right)_\sHH
    + \left(  \uh{i}, \uh{i}   \right)_\sHH \right] \nonumber \\
    \ge  {} & \la{\m}\sum_{i=\m}^\M  \left[ \left(  \ue{i},  \ue{i}   \right)_\sHH
    - 2  \left(  \ue{i}, \gamh \ue{i}   \right)_\sHH
     + \left(  \uh{i}, \uh{i}   \right)_\sHH \right] \nonumber \\
     =  {} & \la{\m} \left[ \Tr((\gam)^2) - 2\Tr(\gam\gamh) + \Tr((\gamh)^2)\right]\\
     =  {} & \la{\m} \left[ \norm{\gam}_{\HS}^2 -2 (\gam,\gamh)_{\HS}+ \norm{\gamh}_{\HS}^2 \right]\\
     =  {} & \la{\m} \norm{\gam-\gamh}_{\HS}^2,
\end{align*}
from which we deduce that
\begin{align*}
  \norm{\A^{1/2}(\Phie-\Phiho)}_{\sHH}^2 -  \norm{\A^{1/2}(\gam-\gamh)}_{\HS}^2
  \le \frac{\la{\M}}{4\la{\m}} \norm{\gam - \gamh}_\HS^2  \norm{\A^{1/2}(\gam-\gamh)}_\HS^2,
\end{align*}
which gives the right inequality of~\eqref{eq:eqH1}.
\end{proof}

\bibliographystyle{siam}
\bibliography{biblio5}

\def\polhk#1{\setbox0=\hbox{#1}{\ooalign{\hidewidth
  \lower1.5ex\hbox{`}\hidewidth\crcr\unhbox0}}}
  \def\polhk#1{\setbox0=\hbox{#1}{\ooalign{\hidewidth
  \lower1.5ex\hbox{`}\hidewidth\crcr\unhbox0}}} \def\cprime{$'$}
\begin{thebibliography}{10}

\bibitem{Bank2013-zj}
{\sc R.~E. Bank, L.~Grubi{\v s}i{\'c}, and J.~S. Ovall}, {\em A framework for
  robust eigenvalue and eigenvector error estimation and {R}itz value
  convergence enhancement}, Appl. Numer. Math., 66 (2013), pp.~1--29.

\bibitem{Baz_Fox_eigs_Sch_61}
{\sc N.~W. Bazley and D.~W. Fox}, {\em Lower bounds for eigenvalues of
  {S}chr\"odinger's equation}, Phys. Rev. (2), 124 (1961), pp.~483--492.

\bibitem{Boffi2017-ny}
{\sc D.~Boffi, R.~G. Dur{\'a}n, F.~Gardini, and L.~Gastaldi}, {\em A posteriori
  error analysis for nonconforming approximation of multiple eigenvalues},
  Math. Methods Appl. Sci., 40 (2017), pp.~350--369.

\bibitem{Boffi2017-fj}
{\sc D.~Boffi, D.~Gallistl, F.~Gardini, and L.~Gastaldi}, {\em Optimal
  convergence of adaptive {FEM} for eigenvalue clusters in mixed form}, Math.
  Comp., 86 (2017), pp.~2213--2237.

\bibitem{Bonito2016-sv}
{\sc A.~Bonito and A.~Demlow}, {\em Convergence and optimality of
  {higher-order} adaptive finite element methods for eigenvalue clusters}, SIAM
  J. Numer. Anal., 54 (2016), pp.~2379--2388.

\bibitem{Brae_Pill_Sch_p_rob_09}
{\sc D.~Braess, V.~Pillwein, and J.~Sch{\"o}berl}, {\em Equilibrated residual
  error estimates are {$p$}-robust}, Comput. Methods Appl. Mech. Engrg., 198
  (2009), pp.~1189--1197.

\bibitem{Brez_For_91}
{\sc F.~Brezzi and M.~Fortin}, {\em Mixed and hybrid finite element methods},
  vol.~15 of Springer Series in Computational Mathematics, Springer-Verlag, New
  York, 1991.

\bibitem{Cances2012-bo}
{\sc E.~Canc{\`e}s, R.~Chakir, and Y.~Maday}, {\em Numerical analysis of the
  planewave discretization of some orbital-free and {Kohn-Sham} models}, Esaim
  Math. Model. Numer. Anal., 46 (2012), pp.~341--388.

\bibitem{Canc_Dus_Mad_Stam_Voh_eigs_conf_17}
{\sc E.~Canc\`es, G.~Dusson, Y.~Maday, B.~Stamm, and M.~Vohral{\'{\i}}k}, {\em
  Guaranteed and robust a posteriori bounds for {L}aplace eigenvalues and
  eigenvectors: conforming approximations}, SIAM J. Numer. Anal., 55 (2017),
  pp.~2228--2254.

\bibitem{Canc_Dus_Mad_Stam_Voh_eigs_nonconf_18}
{\sc E.~Canc{\`e}s, G.~Dusson, Y.~Maday, B.~Stamm, and M.~Vohral{\'{\i}}k},
  {\em Guaranteed and robust a posteriori bounds for {L}aplace eigenvalues and
  eigenvectors: a unified framework}, Numer. Math., 140 (2018), pp.~1033--1079.

\bibitem{Cances_undated-fd}
{\sc E.~Canc{\`e}s, G.~Dusson, Y.~Maday, B.~Stamm, and M.~Vohral{\'\i}k}, {\em
  Post-processing of the planewave approximation of {S}chr{\"o}dinger
  equations. {P}art {I}: linear operators}, IMA Journal of Numerical Analysis,
  (2020).

\bibitem{Cars_Funk_full_rel_FEM_00}
{\sc C.~Carstensen and S.~A. Funken}, {\em Fully reliable localized error
  control in the {FEM}}, SIAM J. Sci. Comput., 21 (1999/00), pp.~1465--1484.

\bibitem{Cars_Ged_LB_eigs_14}
{\sc C.~Carstensen and J.~Gedicke}, {\em Guaranteed lower bounds for
  eigenvalues}, Math. Comp., 83 (2014), pp.~2605--2629.

\bibitem{Cars_Ged_Rim_expl_cnst_12}
{\sc C.~Carstensen, J.~Gedicke, and D.~Rim}, {\em Explicit error estimates for
  {C}ourant, {C}rouzeix-{R}aviart and {R}aviart-{T}homas finite element
  methods}, J. Comput. Math., 30 (2012), pp.~337--353.

\bibitem{Ciar_Voh_chan_coef_18}
{\sc P.~Ciarlet, Jr. and M.~Vohral{\'{\i}}k}, {\em Localization of global norms
  and robust a posteriori error control for transmission problems with
  sign-changing coefficients}, M2AN Math. Model. Numer. Anal., 52 (2018),
  pp.~2037--2064.

\bibitem{Ciar_78}
{\sc P.~G. Ciarlet}, {\em The Finite Element Method for Elliptic Problems},
  vol.~4 of Studies in Mathematics and its Applications, North-Holland,
  Amsterdam, 1978.

\bibitem{Dai2015-jq}
{\sc X.~Dai, L.~He, and A.~Zhou}, {\em Convergence and quasi-optimal complexity
  of adaptive finite element computations for multiple eigenvalues}, IMA J.
  Numer. Anal., 35 (2015), pp.~1934--1977.

\bibitem{Dest_Met_expl_err_CFE_99}
{\sc P.~Destuynder and B.~M{\'e}tivet}, {\em Explicit error bounds in a
  conforming finite element method}, Math. Comp., 68 (1999), pp.~1379--1396.

\bibitem{Dorf_cvg_FE_96}
{\sc W.~D{\"o}rfler}, {\em A convergent adaptive algorithm for {P}oisson's
  equation}, SIAM J. Numer. Anal., 33 (1996), pp.~1106--1124.

\bibitem{GD}
{\sc G.~Dusson}, {\em Post-processing of the planewave approximation of
  {S}chr{\"o}dinger equations. {P}art {II}: Kohn--sham models}, IMA Journal of
  Numerical Analysis,  (2020).

\bibitem{Ern_Voh_p_rob_15}
{\sc A.~Ern and M.~Vohral{\'{\i}}k}, {\em Polynomial-degree-robust a posteriori
  estimates in a unified setting for conforming, nonconforming, discontinuous
  {G}alerkin, and mixed discretizations}, SIAM J. Numer. Anal., 53 (2015),
  pp.~1058--1081.

\bibitem{Ern_Voh_p_rob_3D_16}
\leavevmode\vrule height 2pt depth -1.6pt width 23pt, {\em Stable broken
  {$H^1$} and {${\bm H}(\mathrm{div})$} polynomial extensions for
  polynomial-degree-robust potential and flux reconstruction in three space
  dimensions}.
\newblock HAL Preprint 01422204, submitted for publication, 2016.

\bibitem{Fors_eigs_55}
{\sc G.~E. Forsythe}, {\em Asymptotic lower bounds for the fundamental
  frequency of convex membranes}, Pacific J. Math., 5 (1955), pp.~691--702.

\bibitem{Gallistl2014-nq}
{\sc D.~Gallistl}, {\em Adaptive nonconforming finite element approximation of
  eigenvalue clusters}, Comput. Methods Appl. Math., 14 (2014).

\bibitem{Gallistl2015-yf}
\leavevmode\vrule height 2pt depth -1.6pt width 23pt, {\em An optimal adaptive
  {FEM} for eigenvalue clusters}, Numer. Math., 130 (2015), pp.~467--496.

\bibitem{Gia_Hall_a_post_eig_DG_12}
{\sc S.~Giani and E.~J.~C. Hall}, {\em An {\it a posteriori} error estimator
  for {$hp$}-adaptive discontinuous {G}alerkin methods for elliptic eigenvalue
  problems}, Math. Models Methods Appl. Sci., 22 (2012), pp.~1250030, 35.

\bibitem{Gris_ell_nonsm_85}
{\sc P.~Grisvard}, {\em Elliptic problems in nonsmooth domains}, vol.~24 of
  Monographs and Studies in Mathematics, Pitman (Advanced Publishing Program),
  Boston, MA, 1985.

\bibitem{Grubisic2009-bg}
{\sc L.~Grubi{\v s}i{\'c} and J.~S. Ovall}, {\em On estimators for
  eigenvalue/eigenvector approximations}, Math. Comp., 78 (2009), pp.~739--770.

\bibitem{Hech_Pir_Hya_Oht_FreeFem++_12}
{\sc F.~Hecht, O.~Pironneau, J.~Morice, A.~Le~Hyaric, and K.~Ohtsuka}, {\em
  {FreeFem++}}, tech. rep., Laboratoire Jacques-Louis Lions, Universit\'e
  Pierre et Marie Curie, Paris, \url{http://www.freefem.org/ff++}, 2012.

\bibitem{Hu_Huang_Lin_bounds_eigs_14}
{\sc J.~Hu, Y.~Huang, and Q.~Lin}, {\em Lower bounds for eigenvalues of
  elliptic operators: by nonconforming finite element methods}, J. Sci.
  Comput., 61 (2014), pp.~196--221.

\bibitem{Hu_Huang_Shen_eigs_NC_14}
{\sc J.~Hu, Y.~Huang, and Q.~Shen}, {\em The lower/upper bound property of
  approximate eigenvalues by nonconforming finite element methods for elliptic
  operators}, J. Sci. Comput., 58 (2014), pp.~574--591.

\bibitem{Kat_bounds_eigs_49}
{\sc T.~Kato}, {\em On the upper and lower bounds of eigenvalues}, J. Phys.
  Soc. Japan, 4 (1949), pp.~334--339.

\bibitem{Lad_Leg_83}
{\sc P.~Ladev{\`e}ze and D.~Leguillon}, {\em Error estimate procedure in the
  finite element method and applications}, SIAM J. Numer. Anal., 20 (1983),
  pp.~485--509.

\bibitem{Liu_fram_eigs_15}
{\sc X.~Liu}, {\em A framework of verified eigenvalue bounds for self-adjoint
  differential operators}, Appl. Math. Comput., 267 (2015), pp.~341--355.

\bibitem{Liu_Kik_interp_10}
{\sc X.~Liu and F.~Kikuchi}, {\em Analysis and estimation of error constants
  for {$P_0$} and {$P_1$} interpolations over triangular finite elements}, J.
  Math. Sci. Univ. Tokyo, 17 (2010), pp.~27--78.

\bibitem{Liu_Oish_bounds_eig_13}
{\sc X.~Liu and S.~Oishi}, {\em Verified eigenvalue evaluation for the
  {L}aplacian over polygonal domains of arbitrary shape}, SIAM J. Numer. Anal.,
  51 (2013), pp.~1634--1654.

\bibitem{Liu_Vejch_eigs_clusters_19}
{\sc X.~Liu and T.~Vejchodsk{\'y}}, {\em Rigorous and fully computable a
  posteriori error bounds for eigenfunctions}.
\newblock Preprint~arXiv:1904.07903, submitted for publication, 2019.

\bibitem{Luo_Lin_Xie_eigs_bounds_12}
{\sc F.~Luo, Q.~Lin, and H.~Xie}, {\em Computing the lower and upper bounds of
  {L}aplace eigenvalue problem: by combining conforming and nonconforming
  finite element methods}, Sci. China Math., 55 (2012), pp.~1069--1082.

\bibitem{Prag_Syng_47}
{\sc W.~Prager and J.~L. Synge}, {\em Approximations in elasticity based on the
  concept of function space}, Quart. Appl. Math., 5 (1947), pp.~241--269.

\bibitem{Reed1978-li}
{\sc M.~Reed and B.~Simon}, {\em Methods of modern mathematical physics {IV}:
  Analysis of operators}, Academic Press, New York, 1978.

\bibitem{Ro_Tho_91}
{\sc J.~E. Roberts and J.-M. Thomas}, {\em Mixed and hybrid methods}, in
  Handbook of Numerical Analysis, Vol.\ II, North-Holland, Amsterdam, 1991,
  pp.~523--639.

\bibitem{Tref_Betc_comp_eigs_plan_06}
{\sc L.~N. Trefethen and T.~Betcke}, {\em Computed eigenmodes of planar
  regions}, in Recent advances in differential equations and mathematical
  physics, vol.~412 of Contemp. Math., Amer. Math. Soc., Providence, RI, 2006,
  pp.~297--314.

\bibitem{Wang_Cham_Lad_Zhong_eigvecs_16}
{\sc L.~Wang, L.~Chamoin, P.~Ladev\`eze, and H.~Zhong}, {\em Computable upper
  and lower bounds on eigenfrequencies}, Comput. Methods Appl. Mech. Engrg.,
  302 (2016), pp.~27--43.

\bibitem{Wein_eigs_56}
{\sc H.~F. Weinberger}, {\em Upper and lower bounds for eigenvalues by finite
  difference methods}, Comm. Pure Appl. Math., 9 (1956), pp.~613--623.

\end{thebibliography}

\end{document}